\documentclass[11pt]{article}
\usepackage{amsmath,latexsym,amsxtra,enumerate,
color}
\usepackage{bbm, dsfont}
\usepackage{amsthm}
\usepackage[english]{babel}
\usepackage[dvipsnames,svgnames,table]{xcolor}
\usepackage{times, amsfonts, amssymb, amsthm, amscd, graphicx,stmaryrd}
\usepackage[mathscr]{euscript}
\usepackage{comment}
\usepackage{cancel}

\RequirePackage[colorlinks, linkcolor=blue, citecolor=blue,urlcolor=blue]{hyperref}


\newcommand{\la}{\lambda}
\newcommand{\D}{\mathcal{D}}
\newcommand{\R}{\mathbb{R}}
\newcommand{\N}{\mathbb{N}}
\newcommand{\E}{\mathbb{E}}
\newcommand{\one}{\mathds{1}}
\newcommand{\tx}{{\tt x}}
\newcommand{\f}{ \mathbf{f}}
\newcommand{\Y}{\mathcal{Y}}
\newcommand{\Z}{\mathcal{Z}}
\newcommand{\U}{\mathcal{U}}
\newcommand{\om}{\omega}
\newcommand{\Om}{\Omega}
\newcommand{\ftn}{\mathcal{F}}

\newcommand{\equa}{\begin{eqnarray*}}
\newcommand{\tion}{\end{eqnarray*}}
\newcommand{\equal}{\begin{eqnarray}}
\newcommand{\tionl}{\end{eqnarray}}
\newcommand{\m}{\mathbbm{m}}
\newcommand{\DD}{{\mathbb{D}_{1,2}}}

\newcommand{\dJ}{d_{J_1}}

\newcommand{\EE}{\mathbb{E}}

\newcommand{\NN}{\mathbb{N}}

\newcommand{\PP}{\mathbb{P}}
\newcommand{\QQ}{\mathbb{Q}}
\newcommand{\RR}{\mathbb{R}}

\newcommand{\bB}{\mathcal{B}}

\newcommand{\fF}{\mathcal{F}}
\newcommand{\gG}{\mathcal{G}}

\newcommand{\lL}{\mathcal{L}}
\newcommand{\mM}{\mathcal{M}}
\newcommand{\nN}{\mathcal{N}}

\newcommand{\pP}{\mathcal{P}}

\newcommand{\tT}{\mathcal{T}}

\newcommand{\fX}{\mathfrak{X}}

\newcommand{\fb}{\mathfrak{b}}
\newcommand{\fc}{\mathfrak{c}}

\newcommand{\fs}{\mathfrak{s}}

\newcommand{\tr}{{\tt r}}

\newcommand{\ry}{{\rm y}}
\newcommand{\rz}{{\rm z}}
\newcommand{\ru}{{\rm u}}
\newcommand{\ov}{\overline}

\newcommand{\wf}{\widehat f}
\newcommand{\wh}{\widehat h}

\newcommand{\bx}{{\bf x}}
\newcommand{\by}{{\bf y}}


\theoremstyle{plain}
\newtheorem{thm}{Theorem}[section]
\newtheorem{lem}[thm]{Lemma}
\newtheorem{prop}[thm]{Proposition}

\newtheorem{cor}[thm]{Corollary}
\newtheorem{lemma}[thm]{Lemma}

\theoremstyle{definition}

\newtheorem{definition}[thm]{Definition}

\newtheorem{rem}[thm]{Remark}

\newtheorem{example}[thm]{Example}
\newtheorem{assumption}[thm]{Assumption}

\allowdisplaybreaks




\parindent0em    

\makeatletter
\def\timenow{\@tempcnta\time
\@tempcntb\@tempcnta
\divide\@tempcntb60
\ifnum10>\@tempcntb0\fi\number\@tempcntb
:\multiply\@tempcntb60
\advance\@tempcnta-\@tempcntb
\ifnum10>\@tempcnta0\fi\number\@tempcnta}
\makeatother


\newcommand{\com}{\color{red}}

\newcommand{\ch}{\color{blue}}

\usepackage[colorinlistoftodos,bordercolor=orange,backgroundcolor=orange!20,linecolor=orange,textsize=scriptsize]{todonotes}
\newcommand{\ar}[1]{\todo[inline]{\textbf{A.: }#1}} 
 
\sloppy


\title{Locally Lipschitz Path Dependent FBSDEs with Unbounded Terminal
 Conditions in Brownian and L\'evy Settings}

\author{Hannah Geiss$^1$  \hspace{0.5em}   C\'eline Labart$^2$  \hspace{0.5em}   Adrien Richou$^{3,4}$ \hspace{0.5em}   Alexander Steinicke$^5$\\
   \small  \today
}
\date{}

\begin{document}

\maketitle
\begin{abstract}
This paper is dedicated to the analysis of forward backward stochastic
differential equations driven by a Lévy process. We assume that the generator and the terminal condition are path-dependent and satisfy a local Lipschitz condition. We study solvability and Malliavin differentiability of such BSDEs.\\

The proof of the existence and uniqueness is done in three steps. First of all, we truncate and localize the terminal condition and the generator. Then we use an iteration argument to get bounds for the solutions of the truncated BSDE (independent from the level of truncation). Finally, we let the level of truncation tend to infinity. A stability result ends the proof. The Malliavin differentiability result
is based on a recent characterisation for the Malliavin Sobolev space $\mathbb{D}_{1,2}$ by S. Geiss and Zhou. 

\end{abstract}

\vspace{1em}
{\noindent \textit{Keywords:}  L\'evy driven BSDEs, path dependent and locally Lipschitz parameters, Malliavin differentiability of BSDEs, existence and uniqueness of solutions to BSDEs
}
{\noindent
\footnotetext[1]{Department of Mathematics and Statistics, University of Jyvaskyla, Finland,\\ \hspace*{1.5em} hannah.r.geiss@jyu.fi}
\footnotetext[2]{CNRS, LAMA, Univ. Savoie Mont Blanc, France, celine.labart@univ-smb.fr  }
\footnotetext[3]{Université de Bordeaux, Institut de Mathématiques de Bordeaux, UMR CNRS 5251, France, \\ \hspace*{1.5em}adrien.richou@math.u-bordeaux.fr }
\footnotetext[4]{The author research has benefited from the support of the ANR Project ReLISCoP (ANR-21-CE40-0001).}
\footnotetext[5]{Department of Mathematics, Technical University of Leoben,  Austria, \\\hspace*{1.5em}  alexander.steinicke{\rm@}unileoben.ac.at}



\section{Introduction}

Backward stochastic differential equations (BSDEs) have been studied for many years, since the seminal papers from Pardoux and Peng (\cite{PP2} in 1990 and \cite{PardouxPeng92} in  1992) that study the Brownian motion setting. Then, this class of equation was extended in the setting of random measures associated to a L\'evy process by Barles, Buckdahn, and Pardoux (\cite{bbp} in 1997). 
Given that BSDEs have numerous applications, for example in optimal stochastic control,  mathematical finance, semilinear PDEs or stochastic differential games, existence and uniqueness of solutions have been a central topic, as has Malliavin differentiability, the connection to semilinear PDEs 
for forward backward SDEs, and numerical simulation. We refer to \cite{Zhang17} for an extensive presentation of results and applications in the Brownian setting and to \cite{Papapantoleon-18} for a comprehensive literature review concerning the jump framework.

\paragraph{}
To match the settings in applications, weaker assumptions on the coefficients are often needed. For example, quadratic or super-quadratic behavior in the control variable instead of Lipschitz continuity is often needed in optimal stochastic control applications. 
In the Brownian setting, the quadratic case was deeply studied since the seminal paper \cite{Koby00}, and it is known that it requires rather strong necessary assumptions on the terminal condition and the random part of the generator: in \cite{BriandHu06} and \cite{Delbaen-11} existence and uniqueness were shown if they satisfy an exponential moment condition. We also refer to \cite{Bahlali} where a method to prove existence of solutions to quadratic BSDEs with unbounded terminal conditions using comparison theorems is developed. On the contrary, the super-quadratic setting is much less studied. First results were obtained in \cite{Delbaen-11b} for the bounded case and later in \cite{CheriditoNam} for a framework where the control variable is bounded. Then, the unbounded framework (for the terminal condition, the random part of the generator and the control variable) was tackled in \cite{Richou12}.

In the L\'evy setting, the quadratic-exponential framework - this designation coming from the fact that it allows an exponential growth with respect to the jump part of the control variable - was strongly studied due to some applications to exponential utility maximization problems, see e.g. \cite{Bech, Morlais-10, Kazi-TaniPossamaiZhou15, BeBuKe} or \cite{FujiTaka} in the bounded case. The unbounded case was studied in \cite{AntoManc16} and in the preprints \cite{ElKaroui16} and \cite{MatoussiSalhi19}. Still in the unbounded case we also refer to the preprint \cite{MatoussiSalhi20} for the reflected setting and to \cite{GuLinXu24} for the marked point processes setting. As in the Brownian setting, the super-quadratic framework  gets much less attention: see \cite{GeissStein20}.

\paragraph{}
In the present paper we consider the FBSDE  given by \eqref{forward} and \eqref{fbsde} below. These equations are driven by a Brownian motion and/or  an independent compensated Poisson random measure.  We treat a path-dependent framework (for the SDE's drift and the dependence of the FBSDE on the SDE) but we can emphasize that our setting includes the Markovian case: see {Remark \ref{rem:SDE-assumptions}} and Example \ref{Examples}.

Our main result, Theorem \ref{ex-and-unique-sol}, requires the Assumptions
\ref{X-assumptions} and \ref{Ypath-assumptions}  and  states existence and uniqueness of the solutions which are bounded by an expression involving the  $\sup$-norm of the forward process. It also states that the solutions are Malliavin differentiable. In order to better reflect the novelty of our results, it is important to put our assumptions into perspective with regard to the current state of the art.
\begin{itemize}
\item If $\tt r=0$ ($\tt r$ is introduced in Assumption \ref{Ypath-assumptions}), our results generalize \cite{GeissStein20} since we do not need any boundedness on the terminal condition and the random part of the generator. It also extends some Brownian framework results of \cite{CheriditoNam} to the path dependent setting.
\item In the pure Brownian setting, we do not cover results of \cite{Richou12} since we need to assume $0 \leq {\tt r} \leq \frac{1}{2\ell}$, while in \cite{Richou12}  we only need $0 \leq {\tt r} \leq \frac{1}{\ell}$. This limitation seems strange at first sight since our proof strategy is inspired by \cite{Richou12} but can be easily explained. Indeed, the technical Lemma \ref{exponential-bound} which is essential in our proof gives us that 
$$\mathbb{E}[e^{c\sup_{0 \leq t \leq T} |X_t|^p}]<+\infty$$
for all $c > 0$ and all $p \leq 1$. It is easy to show that this estimate is not true for $p>1$ by considering a standard Poisson process. Nevertheless, in the Brownian setting this estimate can be extended to $p=2$ and $c$ small enough: remark that this is obvious for the standard Brownian motion and see e.g. part 5 in \cite{BriandHu08} for the general case.
\item In the quadratic framework ($\ell=1$), we have an existence and uniqueness result in some cases where the terminal condition is not exponentially integrable. Indeed, by taking ${\tt r}=1/2$ and $|X_T|^{3/2}$ for the terminal condition, non purely Brownian L\'evy processes satisfy $\mathbb{E}\left[e^{c|X_T|^{3/2}}\right]=+\infty$ for all $c>0$. 
This is not in contradiction with known results for unbounded quadratic-exponential BSDEs where the exponential integrability is asked and cannot be relaxed for the classical entropic risk measure since our framework does not cover the quadratic-exponential setting.
\item We assume in Assumptions \ref{X-assumptions} that $\sigma$ and $\rho$ are deterministic. This strong assumption is essential in our proof and cannot be relaxed easily to a setting where $\sigma$ and $\rho$ are functions of the process (and even less for functions of the process path). Indeed, in the Brownian setting we can show that the estimate \eqref{bd-cond-x} for $Z$ is still valid in the quadratic framework, for the bounded setting or ${\tt r}=0$: see \cite{Richou12} and \cite{BriandRichou} where BMO tools are essential. Let us remark that in the quadratic framework, the estimates \eqref{bd-cond-x} are interesting for some applications, for example analysis of numerical schemes, but are not useful for proving existence and uniqueness results. Finally, up to our knowledge, there is nothing known in the super-quadratic framework where we do not have BMO tools on hand. In order to preserve the readability of this article and since only the quadratic framework could be tackled, we will not treat here the extension of \cite{Richou12} and \cite{BriandRichou} to the L\'evy setting when $\sigma$ and $\rho$ are not deterministic.
\item For the  Malliavin differentiability of SDEs there is a very general result in \cite{ImkellerReisSalkeld}. However, it does not cover our specific  path-dependent setting which permits  a pointwise   estimate for the  Malliavin derivative. Our result on Malliavin differentiability of  BSDEs with jumps generalizes  \cite{GeissStein16} and \cite{FujiTaka} to the unbounded case.

\end{itemize}

\paragraph{}
Let us now give some details concerning the proof strategy and tools used.

The starting point of the proof of the existence and uniqueness result is to truncate/localize the terminal condition $g$ and the generator $(\bx,z,u) \mapsto f(.,\bx,.,z,u)$ in order to come across the classical Lipschitz framework. Then, a crucial step is given by Proposition \ref{bounds-independend-from-M} where, as in \cite{Richou12}, an iteration argument is applied to get bounds for the solution of the truncated BSDE, independent from the level of truncation. Finally, we make the truncation level tend to infinity and we conclude thanks to a stability result given by Lemma \ref{lem:apriori}.

Concerning the Malliavin differentiability result, a recent characterisation for the Malliavin Sobolev space $\DD$  from \cite{GeissZhou} (see  Theorem \ref{StefanXilin} below) enables us to get Malliavin differentiability in the Gaussian setting of the path-dependent SDE and BSDE very easily. Another characterisation of  $\DD$ goes back to Sugita (\cite{Sugita}) and was rediscovered in the recent years as a useful tool (\cite{GeissStein16,ImkellerReisSalkeld,Mast-etal}). In combination with results from Janson \cite{Janson} we use it here for instance to estimate Malliavin derivatives of path dependent functions in the Gaussian setting or in the 'Gaussian direction' of the derivative in the L\'evy setting. Let us emphasize that these estimates are crucial to get bounds on the truncated solution in Proposition \ref{bounds-independend-from-M}.

\paragraph{}
Eventually, we highlight that the estimates \eqref{bd-cond-x}, in particular those on $Z$ and $U$, can be used for tackling interesting applications. For example, a comparison result can be derived easily thanks to \eqref{bd-cond-x} and the classical linearization technique. Then, this comparison result can be used to prove that the unique solution of our FBSDE provides a viscosity solution to the associated integro-PDE. It could be also possible to prove the (classical) differentiability of the solution under appropriate assumptions, by following same ideas as in \cite{Masiero14}. Finally, the estimates \eqref{bd-cond-x} allow to get an upper bound on the truncation error between the original BSDE and the one where the dependence of the generator with respect to $(z,u)$ is bounded in order to get a classical Lipschitz generator. This kind of result is important if we want to have a numerical scheme for solving the BSDE: Indeed, a natural approach is to use a classical scheme on the Lipschitz approximated BSDE (e.g. \cite{BouchardElie08}) and then upper bound the global error thanks to the truncation error and the discretization error for the truncated BSDE. Obviously, the two terms depend strongly on the truncation level. Nevertheless, in the Brownian setting it is possible to set it in a way such that the global error is reasonable, see \cite{Richou12}, and it should be possible to use the same in the L\'evy setting.

\paragraph{}
In the remaining, the paper is organized as follows: Section \ref{setting} contains necessary assumptions and notations in order to understand the underlying problem. Section \ref{sec: main} presents our main result on existence, uniqueness and Malliavin differentiability of superquadratic BSDEs together with remarks, explanations and examples. The sections following directly afterwards are preliminaries for the proof of the main result but convey interesting results themselves: In Section \ref{sec: preliminaries} we show that the forward process admits all exponential moments and Section \ref{Mall} presents tools for Malliavin calculus in the L\'evy setting. Finally, in Section \ref{sec: bounds} bounds for the BSDE's solution processes are obtained and in Section \ref{sec: finish}, together with an a priori-estimate, the main result's proof is finalized. Some auxiliary results are postponed into the \ref{app}ppendix.

\section{Setting and notations \label{setting}}

Let $\mathcal{L}=\left( \mathcal{L}_t\right)_{t\in{[0,T]}}$  (for $T>0$ fixed)  be a c\`adl\`ag L\'evy process  on a complete probability space $(\Omega,\mathcal{F},\mathbb{P})$ with L\'evy measure $\nu$ on $\R_0:= \R\!\setminus\!\{0\}$ (especially, also $\nu\equiv 0$ is permitted). 
We will denote the augmented natural filtration of $\mathcal{L}$ by
$\left({\mathcal{F}_t}\right)_{t\in{[0,T]}}$ and assume that $\mathcal{F}=\mathcal{F}_T.$ \\
 \bigskip
 The L\'evy-It\^o decomposition of a L\'evy process $ \mathcal{L}$ can be written as
\equa
\mathcal{L}_t = \text{b} t + \varsigma W_t   +  \int_{{]0,t]}\times \{ |x|\le1\}} x\tilde{\nN}(ds,dx) +  \int_{{]0,t]}\times \{ |x|> 1\}} x  \nN(ds,dx),
\tion
where $\text{b}\in \R, \varsigma\geq 0$, $W$ is a Brownian motion and $\nN$ ($\tilde \nN$) is the (compensated) Poisson random measure associated with $\mathcal{L}$.

\subsection{Notation for the Skorohod space}
\begin{itemize}
\item 
Let $D{[0,T]}$ denote the space of c\`adl\`ag functions on the interval ${[0,T]}$. For $\bx \in   D[0,T]$ let $|\bx |_\infty := \sup_{0\le t\le T} |\bx_t|.$  
\item We equip $D[0,T]$ with  the  (modified) Skorokhod $J_1$-metric $d_{J_1}$,  (see \cite[Section 3.2]{Kern}) given by
\begin{align*}
\dJ(\bx,\by)=\inf_\lambda\bigg\{|\bx\circ\lambda-\by|_\infty {\ch +} \sup_{s\neq t}\Big|\log\Big(\frac{\lambda(s)-\lambda(t)}{s-t}\Big)\Big|\bigg\},
\end{align*}
where the infimum is taken over all increasing bijections $\lambda\colon [0,T]\to[0,T]$. Note that $\dJ(\bx,\by)\leq|\bx-\by|_\infty$.
For properties of $(D{[0,T]},d_{J_1})$ and further reading see \cite{Bill,Kern,Skorohod,Whitt}, for instance.

\item Let
$\mathcal{B}(D{[0,T]})$  be the Borel $\sigma$-algebra  generated by the open sets of $D{[0,T]}.$  This   $\sigma$-algebra coincides with the $\sigma$-algebra generated by the  coordinate projections   
from $ D{[0,T]}$ to  $\R$ given by  $\ \bx  \mapsto \bx_t,\  t\in [0,T]$ (see \cite[Theorem 12.5]{Bill}). 

\item  For  $ \bx\in D{[0,T]}$ and  $t\in [0,T]$  we set 
$$ \bx^t:=(\bx_{t\wedge s})_{s\in{[0,T]}}$$  and define 
$$D{[0,t]}:=\left\{\bx\in D{[0,T]} : \bx^t=\bx \right\}.$$ By this identification
we define a filtration on this space by
\equal \label{filtrationG-t}
\mathcal{G}_t:=\sigma\left(\mathcal{B}\left(D{[0,t]}\right)\cup \operatorname{N}_{\mathcal{L}}{[0,T]}\right), \quad 0\leq t\leq T,
\tionl
where $\operatorname{N}_{ \mathcal{L}}{[0,T]}$ denotes the $\mathbb{P}_{\mathcal{L}}$-
null sets of $\mathcal{B}\left(D{[0,T]}\right)$ where $\mathbb{P}_{\mathcal{L}}$ is the  image measure of the L\'evy process ${\mathcal{L}}\colon\Omega\to D{[0,T]},\omega \mapsto  { \mathcal{L}}(\omega).$ 
\end{itemize}

 \subsection{Further spaces and notations}

\begin{itemize}

\item For $p> 0$, a measure space $(M,\Sigma,\mu)$ and a Banach space $\mathrm{E}$, as usual, let
\begin{align*}
&L^p(M,\Sigma,\mu;\mathrm{E})\\&:=\Big\{f\colon M \rightarrow E : f\text{ is measurable and }\int_M\|f\|_{\mathrm{E}}^p d\mu<\infty\Big\}\big/ N_\mu,
\end{align*}
where $N_\mu$ are the measurable functions that are zero $\mu$-a.e. For the boundary cases, $L^0(M,\Sigma,\mu;\mathrm{E})$ is the space of measurable functions $f\colon M\to E$ (up to a function being zero $\mu$-a.e.) and $L^\infty(M,\Sigma,\mu;\mathrm{E})$ is the space of measurable functions such that $\|f\|_E$ is essentially bounded (again up to $\mu$-a.e.~zero functions). Slightly inaccurate, depending on use, when taking a function out of such a space, we mean to work with a representative of an equivalence class.

Further, if the underlying measure space and/or $\sigma$-algebra is clear, we will just drop them in the notation, if further $\mathrm{E}=\RR$, we just write $L^p(\mu)$. 

For the particular case $\mathrm{E}=\RR$, we let $L^p([0,T]):=L^p([0,T],\mathcal{B}([0,T]), \lambda),$ where $\lambda$ stands for the Lebesgue measure. 
\item For $t\in [0,T]$ and a probability measure $\QQ$ on $(\Omega, \fF)$  we denote 
$$\EE^\QQ_t \cdot = \EE^\QQ[\cdot |\fF_t] \quad \text{and} \quad  \EE^\QQ_{t-} \cdot = \EE^\QQ \Big[\cdot \Big | \sigma \Big (\bigcup_{s<t }  \fF_s \Big) \Big]. $$

\item We denote by $\operatorname{Prog}$ the $\sigma$-algebra of $(\fF_t)_t$-progressively measurable sets on $\Omega\times[0,T]$ and by $\pP$ the one of $(\fF_t)_t$-predictable sets generated
by the left-continuous $(\mathcal{F}_t)$-adapted processes thereon.
\item For  $1\le p \le \infty$ let  $\mathcal{S}^p$ denote the  space of all $\operatorname{Prog}$-measurable and c\`adl\`ag processes  $Y\colon\Omega\times{[0,T]} \rightarrow \R$ such that
\equa
\left\|Y\right\|_{\mathcal{S}^p}:=\||Y|_\infty \|_{L^p} <\infty.
\tion
\item  $L^p(W) $ denotes the space of all $\operatorname{Prog}$-measurable processes $Z\colon \Omega\times{[0,T]}\rightarrow \R$  such that
\equa
 \|Z \|_{L^p( \PP\otimes\la)} <\infty.
\tion
\item We define $L^2(\tilde \nN)$ as the space of all random fields $U\colon \Omega\times{[0,T]}\times{\R_0}\rightarrow \R$
which are measurable with respect to
$\mathcal{P}\otimes\mathcal{B}(\R_0)$ such that
\equa
\left\|U\right\|_{L^2(\tilde \nN) }^2:=\E\int_{{[0,T]}\times{\R_0}}\left|U_s(x)\right|^2 ds \nu(dx)<\infty,
\tion
\end{itemize}

\section{The main result}\label{sec: main}

We consider for $t \in [0,T]$
\begin{align} \label{forward}
X_t = \, & x + \int_0^t b(s, (X_r^{s})_{r\in [0,T]}) ds +  \int_0^t \sigma(s) dW_s + \int_{{]0,t]}\times{\R_0}}  \rho(s,v)  \tilde \nN(ds,dv), \\
 \label{fbsde} Y_t= \,&g((X_s)_{s\in [0,T]})+\int_t^T  f\left( s, (X_r^{s})_{r\in [0,T]},Y_s, Z_s,  H_s  \right)ds  \notag \\
& -     \int_t^T Z_{s}   dW_s -\int_{{]t,T]}\times{\R_0}}U_{s}(v) \tilde \nN(ds,dv), 
\end{align}
with   $H_s := \int_{\R_0}  h(s, U_s(v)) \kappa (v)  \nu(dv),$ and 
$\kappa (v) := 1\wedge |v|.$ Recall that $X_r^s = X_{s \wedge r}$}.

The structure of the dependency on $U$ by a $\nu(dv)$-integral functional is an explicit way to employ a locally Lipschitz dependence in this variable. With this certain form, we can rely on earlier BSDE results, e.g.~those in \cite{DelongImkeller10, FujiTaka, GeissStein16}, especially concerning existence and comparison theorems. 

\begin{definition}
  We say that $(Y,Z,U)$ is a solution to \eqref{fbsde} if the triplet of processes satisfies \eqref{fbsde} and belongs to $\mathcal{S}^2\times L^2(W)\times L^2(\tilde{\nN})$.
\end{definition}

We agree on the following assumptions on the coefficients.

\begin{assumption}[for $X$] \label{X-assumptions} 
The functions $b: [0,T]\times D[0,T] \to \R,$ $\sigma: [0,T]\to \R,$
and  $\rho: [0,T] \times \R \to \R$  are jointly measurable 
and satisfy
\begin{enumerate}[(1)]
\item  $|b(t,0)| \le K_b$, $|b(t,\bx)-b(t,\bx')|\le L_b |\bx-\bx'|_\infty,$ for all $(t,\bx)\in [0,T] \times D[0,T],$\label{ass:sdeb}
\item \label{bounded-sigma} $|\sigma(t)| \le K_\sigma,$  for all $t\in [0,T]$,
\item \label{bounded-gamma} $|\rho(t,z)| \le \kappa_{\rho}(z)$ for all $(t,z)\in [0,T] \times \R$  with $ \kappa_{\rho} \in L^2(\nu)\cap L^\infty(\nu) $. \\
We denote $\kappa_{\rho,2}:=\|\kappa_{\rho}\|_{L^2(\nu)}$ and  $\kappa_{\rho,\infty}:=\|\kappa_{\rho}\|_{L^{\infty}(\nu)}$.
\end{enumerate}
\end{assumption}

\begin{rem}\label{rem:SDE-assumptions}

\begin{itemize}\hfill
\item Note that in Assumption \ref{X-assumptions}\eqref{ass:sdeb}, even if there is no dependency on $t$, measurability w.r.t.~$\bB(D[0,T])$ needs to be required, as it does not follow from continuity w.r.t.~$|\cdot|_\infty$ (in contrast to the situation on the space of continuous functions $C[0,T]$). See the counterexample in Remark \ref{the-impossible}.
\item The mapping $b\colon (t,\bx)\mapsto \tilde b(t, \bx_T)$ for a measurable function $\tilde{b}\colon [0,T]\times\R\to\R$, which is Lipschitz in the second variable fits our setting. Since 
$$\tilde{b}(t,X_t)=\tilde{b}(t,X^t_T)=b(t,\bx^t),$$
the Markovian setting is included in ours.
\item The SDE \eqref{forward} has a unique solution under these assumptions. This can be seen following the proof in \cite[V.2.8-12]{RW00}, where Picard iterations are constructed that converge to a solution. For another approach in the continuous setting, see \cite[Chapter IX,\S 2]{RY99}.
Exactly the same as in \cite{RW00} can be done in our situation (even more generally, with an additional Lipschitz dependency in $\sigma$ and $\varrho$). The limit of the Picard iterations in $|\cdot|_\infty$ is also a limit in $\dJ$ (the coarser topology), therefore, measurability in $D[0,T]$ pertains.

\end{itemize}

\end{rem}

\smallskip

\begin{assumption}[for $(Y,Z,U)$]\label{Ypath-assumptions}
For a  measurable function
$$ f: [0,T]\times D{[0,T]}  \times \R \times \R \times \R \to \R $$
there exists a  function   $k_f   \in L^1([0,T])$ 
such that $\forall t \in [0,T]$   it holds
\equa 
&&\hspace{-4em}    |f(t,0,0,0,0)|  \le   k_f(t). 
 \tion

Moreover, we assume that there exist $c>0$,  $\ell \ge 1,  0\le {\tt r} \le \frac{1}{2\ell},   \alpha\ge0, \beta\ge 0, \gamma\ge 0,$  $L_{f,\ry}\ge 0 $ and
$ m_1, m_2 \ge 0$,  $ m_1+ m_1m_2+m_2 \le \ell$ such that 
\begin{enumerate}[(i)]
\item    $g: D{[0,T]} \to \R$ is $\bB(D[0,T])-\bB(\R)$-measurable and \\
 $ |g({\bf x})-g({\bf x}') | \le (c+  \frac{\alpha}{2} (|\bx|^{\tr}_\infty+ |\bx'|^{\tt r}_\infty))|\bx-\bx'|_\infty$ \label{the-g}, 

\item 
for $f$ we assume, that for almost all $s$,\\
$|f(s,\bx, y, z, u)-f(s,\bx', y,z,u) | \le (c+ \frac{\beta}{2}(|\bx|^ {\tt r}_\infty + |\bx'|^ {\tt r}_\infty))|\bx-\bx'|_\infty,$
\item \label{y-ass} $ |f(s,\bx, y, z, u)-f(s,\bx, y', z, u) | \le L_{f,\ry} |y- y'| $,
\item $ |f(s,\bx,y, z, u)-f(s,\bx, y, z', u) | \le \left (c+ \frac{\gamma}{2} (|z|^\ell + |\rz'|^\ell) \right )| z- z'|$,
\item $ |f(s,\bx, y, z, u)-f(s,\bx, y, z,u') | \le  \left(c+ \frac{\gamma}{2} (| u|^{m_1} + |u'|^{m_1}) \right )|u-u'| $,
\item \label{h-assumption} $h: [0,T]\times \R  \to \R$ is continuous  and satisfies  \\
$ |h(s,u)-h(s,u') | \le  \left (c+ \frac{\gamma}{2} (|u|^{m_2}+ |u'|^{m_2}) \right )|u-u'| , \quad h(s,0)=0, $
\item \label{fuhugeminus1} $ f_\ru(s,\bx, y, z, u) h_\ru(s,  u')   \ge -1$ where  $f_\ru$  and $h_\ru$  stand for the weak derivatives.
\end{enumerate}
\end{assumption}

Our main result is the following theorem.

  \begin{thm} \label{ex-and-unique-sol}
Let  $(g,f,h)$ be such that Assumptions  \ref{X-assumptions} and \ref{Ypath-assumptions} are  satisfied. Then, there exists a solution $(Y,Z,U)$ such that
$$\EE|Y|_\infty^2+\EE\int_0^T|Z_s|^2ds +\EE\int_0^T \|U_s\|_{L^2(\nu)}^2ds<\infty$$
and there exist   $a,b,c>0$ such that 
 \begin{align}\label{bd-cond-x}
 |Y_t | \le c(1 +  |X^t|^{{\tt r}+1}_\infty),   \,\,\,\ |Z_t | \le a + b |X^t|^{\tt r} _\infty  \,\,\,\text{and}\,\,\, |U_t(v) | \le \kappa_{\rho}(v)(a + b |X^t|^{\tt r}_\infty ).
\end{align}
 Moreover, the solution is unique among the set of solutions satisfying \eqref{bd-cond-x},
and the processes are Malliavin differentiable i.e.
\equa
Y, Z   \in L^2([0,T];\DD), \quad U\in L^2([0,T]\times\R_0;\DD)
\tion
 and for $t \le u \le T$ 
\begin{align}\label{eqn_DZ_DU} 
D_{t,x} Y_u= \,& D_{t,x}g(X)+   D_{t,x} \int_u^T f \left( s, X^s,  Y_s,Z_s,H_s  \right)ds  \notag \\
& -     \int_u^TD_{t,x} Z_{s}   dW_s -\int_{{]u,T]}\times{\R_0}}  D_{t,x} U_{s}(v) \tilde \nN(ds,dv).
\end{align}

We also have the following bounds for $x\neq 0$ and for a.a. $0 \le s \le t\le T$
\begin{align} \label{bound_DZ_DU}
      |D_{s,x}Z_t| \le  C(1+ a +  b |X^t|_\infty^{\tr}),\;\mbox{ and } \;|D_{s,x} U_t(v)  | \le \kappa_{\rho}(v) C(1+ a +  b |X^t|_\infty^{\tr})
\end{align}
where $C$ depends on $\kappa_{\rho}$ through $\kappa_{\rho,\infty}$ and $\kappa_{\rho,2}$.
\end{thm}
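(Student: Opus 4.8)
The plan is to realise the three-step scheme announced in the introduction. \textbf{Step 1 (truncation).} For a truncation level $M$ I would replace the generator $f$ by a version $f^M$ that is globally Lipschitz in $(z,u)$ — composing the $z$- and $u$-arguments with a cut-off at level $M$ — and cut the terminal condition $g$ so that it becomes bounded; the functional $H_s=\int_{\R_0}h(s,U_s(v))\kappa(v)\nu(dv)$, the continuity of $h$, and the sign condition \ref{Ypath-assumptions}\eqref{fuhugeminus1} are all preserved under such a truncation. On each truncated datum the generator is Lipschitz in every variable with an $L^1$-in-time coefficient, so the classical L\'evy BSDE theory (existence, uniqueness and comparison under $f_\ru h_\ru\ge-1$, as in \cite{DelongImkeller10,FujiTaka,GeissStein16}) produces a unique solution $(Y^M,Z^M,U^M)\in\mathcal{S}^2\times L^2(W)\times L^2(\tilde\nN)$.

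\textbf{Step 2 (bounds uniform in $M$).} This is Proposition \ref{bounds-independend-from-M} and the core of the argument: proving \eqref{bd-cond-x} for $(Y^M,Z^M,U^M)$ with constants independent of $M$. For $Y^M$ I would get $|Y^M_t|\le c(1+|X^t|_\infty^{\tr+1})$ by linearisation/comparison, using the $L^1$ bound $|f(s,0,0,0,0)|\le k_f(s)$ together with $|g(\bx)|\lesssim 1+|\bx|_\infty^{\tr+1}$, which follows from \ref{Ypath-assumptions}\eqref{the-g}. The pointwise bounds on $Z^M,U^M$ come from Malliavin calculus: since $\sigma,\rho$ are deterministic, the forward process has a bounded Malliavin derivative and all exponential moments (Lemma \ref{exponential-bound}), and one has $Z^M_t=D_{t,0}Y^M_t$, $U^M_t(v)=D_{t,v}Y^M_t$. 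Differentiating the BSDE yields a linear BSDE for $D_{s,x}Y^M$ whose $z$-drift $\partial_z f$ is absorbed by a Girsanov change of measure $\QQ$, giving $|D_{s,x}Y^M_t|\le\EE^\QQ_t\big[|D_{s,x}g(X)|+\int_t^T|\partial_{\bx}f|\,|D_{s,x}X|\,dr\big]\lesssim\EE^\QQ_t[1+|X^t|_\infty^{\tr}]$. The decisive quantitative point is that the density of $\QQ$ involves $\exp$ of terms growing like $|Z^M|^\ell\sim|X^t|_\infty^{\tr\ell}$, so this expectation is finite precisely when $\EE[\exp(c|X|_\infty^{2\tr\ell})]<\infty$, which by Lemma \ref{exponential-bound} holds for all $c>0$ as soon as $2\tr\ell\le1$, i.e. under the standing restriction ${\tt r}\le\frac1{2\ell}$. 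This self-consistent estimate delivers $|Z^M_t|\le a+b|X^t|_\infty^{\tr}$ and, via the jump analogue, $|U^M_t(v)|\le\kappa_\rho(v)(a+b|X^t|_\infty^{\tr})$ with $a,b$ free of $M$.

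\textbf{Step 3 (limit, uniqueness, Malliavin regularity).} With the uniform bounds in hand, the cut-offs become inactive on the region where $(Z^M,U^M)$ actually live once $M$ is large, so I would pass to the limit $M\to\infty$: weak compactness in $L^2(W)$ and $L^2(\tilde\nN)$ together with the stability estimate of Lemma \ref{lem:apriori} upgrade this to a strong limit $(Y,Z,U)$ solving \eqref{fbsde} and still obeying \eqref{bd-cond-x}. The same Lemma \ref{lem:apriori} gives uniqueness inside the class satisfying \eqref{bd-cond-x}, since on that class $f$ is effectively Lipschitz with coefficients controlled by $|X|_\infty^{\tr}$, integrable against all exponential weights. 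Finally, Malliavin differentiability follows from the Geiss--Zhou characterisation of $\DD$ (Theorem \ref{StefanXilin}): the uniform bounds on the Malliavin derivatives of the approximations survive the limit through that criterion, yielding $Y,Z\in L^2([0,T];\DD)$, $U\in L^2([0,T]\times\R_0;\DD)$, the linear BSDE \eqref{eqn_DZ_DU}, and the estimates \eqref{bound_DZ_DU} (with $C$ depending on $\kappa_\rho$ only through $\kappa_{\rho,2},\kappa_{\rho,\infty}$) by applying the Girsanov estimate of Step 2 one level higher.

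\textbf{Main obstacle.} The hard part will be making the Malliavin/Girsanov estimate of Step 2 rigorous in the path-dependent L\'evy setting. One must justify $Z^M=D\,Y^M$ and the differentiated BSDE (here the Geiss--Zhou and Sugita characterisations take over the role played by the Markovian spatial gradient in \cite{Richou12}), control the jump contribution to the measure change while keeping $f_\ru h_\ru\ge-1$ so that both the comparison principle and the exponential density stay valid, and verify that the exponential integrability of $|X|_\infty$ consumes exactly the exponent budget $2\tr\ell\le1$. All growth estimates must be uniform in $M$ and stable in the limit, which is what makes the careful bookkeeping of the exponents $\ell,m_1,m_2$ (under $m_1+m_1m_2+m_2\le\ell$) essential throughout.
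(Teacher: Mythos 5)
Your overall architecture coincides with the paper's: truncate $g$ and $f$, represent $Z^M$ and $U^M$ as Malliavin derivatives of $Y^M$, absorb the $z$- and $u$-slopes of the generator by a Girsanov-type change of measure, use the exponential moments of $|X|_\infty$ (Lemma \ref{exponential-bound}) under the budget $2\tr\ell\le 1$ to make the Dol\'eans--Dade density a true martingale, and conclude with the stability Lemma \ref{lem:apriori}. You correctly identify the decisive quantitative point.

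The genuine gap is in your Step 2: the claim that a ``self-consistent estimate delivers $|Z^M_t|\le a+b|X^t|_\infty^{\tr}$ with $a,b$ free of $M$'' is circular as stated. The Girsanov density you use to estimate $D_{s,x}Y^M$ is built from the $z$- and $u$-increments of $f^M$ evaluated along $(Z^M,U^M)$, so its exponential integrability --- and hence the whole representation --- presupposes a bound on $Z^M$ and $U^M$ of the very form you are trying to prove. The paper breaks this circle in two moves that your plan omits. First, Proposition \ref{first-estimates-depending-on-M-pi-eps} produces a crude a priori bound $|Z^M_t|\le a_0^M$, $|U^M_t(x)|\le\kappa_{\rho}(x)a_0^M$ with $a_0^M$ depending on $M$, by a comparison theorem applied to the linear BSDE satisfied by $D_{t,x}Y^M$ (this uses that the truncated data have bounded Malliavin derivatives). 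Second, Lemma \ref{iterated-estimate} shows that a hypothesis $|Z^M_t|\le a+b|X^t|_\infty^{\tr}$ is \emph{improved} to $|Z^M_t|\le C(1+a^{\tr\ell}+b)+C|X^t|_\infty^{\tr}$; the exponent $\tr\ell$ enters because bounding $\EE^{x\,'}_t|X|_\infty^{\tr}$ under the new measure (Lemma \ref{lem_cond_est_X_pi}) picks up the Girsanov drift $\sigma\wf^x_{\rz}$, which is itself of size $a^\ell+b^\ell|X|_\infty^{\tr\ell}$. Iterating $a_{n+1}=(C+1)^2(1+a_n^{\tr\ell})$ from $a_0=a_0^M$ converges, precisely because $\tr\ell<1$, to a limit $a_\infty$ independent of the starting value and hence of $M$. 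Without this iteration (or an equivalent bootstrap) your Step 2 produces only $M$-dependent constants and Step 3 collapses. A secondary caveat: the truncation must also be applied to the path argument, i.e. $g^M(\bx)=g(b_M(\bx))$ and likewise inside $f$, not only to the $z$- and $u$-arguments; otherwise $g^M$ is neither bounded nor globally Lipschitz, and both the classical Lipschitz theory of your Step 1 and the bound $|D_{t,x}g^M(X)|\le(c+\alpha M^{\tr})|D_{t,x}X|_\infty$ needed for the first move above are unavailable.
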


\begin{rem} \label{the-impossible} 

\begin{enumerate}
\item 
A function  $g: D{[0,T]} \to \R$ satisfying only
\begin{align} \label{uniform-lip}
|g({\mathbf x})-g({\mathbf y})| \le L_g |{\bf x} -{\bf y}|_\infty
\end{align}
is in general not   or $\gG_T$- and hence also not $\mathcal{B}(D{[0,T]})$-measurable:

In the case of $\bB([0,T])$, consider a set A which is closed w.r.t.~the $\sup$ norm but not Borel measurable w.r.t.~to the Skorokhod metric. 
Such a set must exist because otherwise the Borel $\sigma$-algebras generated by the  $\sup$ norm  and Skorokhod metric would coincide which is not the case - proved in \cite[Section 15]{Bill}, for example.  For a general L\'evy process such a set can also be found if we take the completion $\gG_T$ instead of $\bB([0,T])$. Take for example $\lL$ to be a standard Poisson process. Then, consider the map 
$$\phi\colon ([0,T],\mathrm{Leb}([0,T]))\to (D[0,T],\gG_T,\PP_\lL),\quad t\mapsto \one_{[t,T]},$$
where $\mathrm{Leb}([0,T])$ denotes the Lebesgue-$\sigma$-algebra on $[0,T]$.
The same map with both sigma algebras restricted to $\bB([0,T])$ and $\bB(D[0,T])$ is measurable, see \cite[Section 15]{Bill}. Each set in $\gG_T$ can be written as the symmetric difference $B\triangle M$ with $B\in \bB(D[0,T])$ and $M\in N_\lL[0,T]$. Hence, the preimage $\phi^{-1}(B\triangle M)$ is given by $\phi^{-1}(B)\triangle\phi^{-1}(M)$. The set $\phi^{-1}(B)$ is measurable and $\phi^{-1}(M)$ is a subset of a Lebesgue null set in $[0,T]$ (the distribution of the jump position of $\lL$ in $[0,T]$ and the Lebesgue measure are equivalent) and therefore contained in $\mathrm{Leb}([0,T])$. Thus $\phi$ is also $\mathrm{Leb}([0,T])-\gG_T$-measurable. Take a non-Lebesgue-measurable set $H\subseteq[0,T]$ and set $U:=\bigcup_{t\in H} \{\by\in D[0,T]: |\phi(t)-\by|_\infty<\tfrac{1}{2}\}$. Then, $U$ is open w.r.t.~$|\cdot|_\infty$ and $\phi^{-1}(U)=H\notin \mathrm{Leb}([0,T])$. Therefore, $U$ is not $\gG_T$-measurable and neither is its complement $A:=D[0,T]\setminus U$.
Define the map 
$$g({\bx}) = \inf_{ {\bf y} \in A}  |{\bx} - {\bf y}|_\infty$$
which satisfies \eqref{uniform-lip}. But we have the pre-image $g^{-1}(\{0\}) =A$, hence $g$ is not $\gG_T$ -measurable.

\item We need   $\kappa (x)  := 1 \wedge |x|$   and \eqref{fuhugeminus1} because we want  the  Dol\'eans-Dade exponentials given in  \eqref{the-process-G} to be non-negative, and for a comparison theorem to hold which is applied in the proof of  Proposition \ref{Malliavin-diff-trunc-BSDE} below.

\end{enumerate}

\end{rem}

\begin{example}
\label{Examples} {\color{white}.}
\begin{enumerate}[(a)]
\item Our setting covers the standard Markovian setting. Functions of type
\begin{align*}
g(\bx)=\tilde{g}(\bx_T),\quad f(s,\bx^s,s,y,z,u)=\tilde{f}(s,\bx^s_s,y,z,u)=\tilde{f}(s,\bx^s_T,y,z,u),
\end{align*}
where $\tilde{g}$ and $\tilde{f}$ are (locally) Lipschitz in the second variable w.r.t.~$\R$ meet our conditions: by the (local) Lipschitz condition, they are (locally) Lipschitz w.r.t.~to $|\cdot|_\infty$. Since the projection $\bx\mapsto \bx_T$ is measurable, the requirements are met. 
\item The supremum functional $\bx\mapsto|\bx|_\infty$ fits in our setting: Clearly it is Lipschitz w.r.t.~itself. For the continuity and hence, measurability w.r.t.~$\dJ$ assume a sequence $(\bx_n)_{n\geq 1}$ with $\dJ(\bx_n,\bx)\to 0$, which means that there is a sequence of time distortions $(\lambda_n)_{n\geq 1}$ such that $|\lambda_n-\mathrm{id}|+|\bx_n\circ \lambda_n-\bx|_\infty\to 0$. Since for all admissible $\lambda$ and $\bx \in D[0,T]$, we have that $|\bx|_\infty=|\bx\circ\lambda|_\infty$, it follows that 
\begin{align*}
\big||\bx_n|_\infty-|\bx|_\infty\big|=\big||\bx_n\circ\lambda_n|_\infty-|\bx|_\infty\big|\leq|\bx_n\circ\lambda_n-\bx|_\infty,
\end{align*}
from which continuity follows.

\item In a similar way, the (signed) size of the 'first' jump 
$$\Delta \bx_\tau:=\begin{cases}\bx_{\tau}-\bx_{\tau-},& \tau:=\inf\{t\in [0,T]: |\bx_t-\bx_{t-}|>0\}>0,\\
 0,& \tau=0\text{ or }\tau=\inf\emptyset,\end{cases}$$ of a trajectory $\bx$ is Lipschitz in the $|\cdot|_\infty$-norm with constant 2 and continuous w.r.t~$\dJ$. Continuity, therefore measurability, holds, since the size of the first jump does not change by time shifts $\lambda$, and the idea from the previous example can be carried out again. This works also for other functionals, not depending on a time distortion $\lambda$, as the following example shows too.

\item The maximal jump $j(\bx)$ of a trajectory $\bx$ satisfies our assumptions for $g$: it is Lipschitz with constant $2$ w.r.t.~$|\cdot|_\infty$ and continuous w.r.t.~$\dJ$ (see \cite[Example 12.1]{Bill}).
\item The jump size at a fixed position $s\in [0,T]$, $\bx\mapsto \Delta\bx_s$ is also Lipschitz with respect to $|\cdot|_\infty$. It is measurable (since it is a limit of differences of projections), but, in contrast to the previous example, not continuous with respect to $\dJ$. Take for example $\bx_n=\one_{[\frac{1}{2}-\frac{1}{n},1]}$ and $\bx=\one_{[\frac{1}{2},1]}$. Then $\dJ(\bx_n,\bx)\to0$ but $|\Delta{\bx_{n,}}_\frac{1}{2}-\Delta\bx_\frac{1}{2}|=1$.
\item The integration functional $g(\bx) = \int_0^T \bx_tdt$ satisfies our assumptions, since it is Lipschitz w.r.t.~$|\cdot|_\infty$ (it is even continuous w.r.t.~$\dJ$). Another admissible type of functional is the point evaluation $g(\bx)=\bx_{t^*}$, with $t^*\in [0,T]$ (in this case it is not continuous w.r.t.~$\dJ$). Both types of functionals are special cases of the next item.

\item A continuous linear mapping $\tT\colon(D[0,T],|\cdot|_\infty)\to\R$ has the form
\begin{align*}
\tT(\bx)=\int_{[0,T]}\bx_t dm(t)+\sum_{t\in [0,T]}\Delta\bx_tM(t),
\end{align*}
where $m$ is a finite signed Borel measure and $M\colon[0,1]\to\R$ is such that $\sum_{t\in[0,T]}|M(t)|<\infty$.
This decomposition is from \cite{Pest95}, where the measurability of $\tT$ w.r.t.~$(D[0,T],\bB(D[0,T]))$ is shown too. Therefore, it satisfies our assumptions. Note that when inserting the solution process $X$, the second summand containing $M$ is zero a.s.~since our jumps stem from a L\'evy process. 

\item By the above example, also $\bx\mapsto G_1\bigg(\int_0^TG_2(\bx_t)dm(t)\bigg)$ for Lipschitz real functions $G_1,G_2$ and $m$ as in the previous example is a functional matching our conditions.

\item All the above examples that are formulated in $g(\bx)$ can naturally be turned into functions of type $f(s,\bx,y,z,u)$ having a similar structure in the $\bx$-variable.
\end{enumerate}
\end{example}

\section{Exponential moments of the forward process}\label{sec: preliminaries}

Under our assumptions we have exponential moments for $|X|_\infty$. 

\begin{lem}  \label{exponential-bound}   If Assumption  \ref{X-assumptions} holds then for all $c>0$ the solution $(X_t)_{t\in [0,T]}$ to \eqref{forward} satisfies
$$\E \exp(  c|X|_\infty ) < \infty.$$
 \end{lem}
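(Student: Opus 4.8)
The plan is to isolate the three sources in \eqref{forward}---the drift, the Brownian integral, and the compensated Poisson integral---reduce to the two martingale parts by a pathwise Gronwall argument, and then bound their exponential moments separately. Writing $M^W_t:=\int_0^t\sigma(s)\,dW_s$ and $M^J_t:=\int_{]0,t]\times\R_0}\rho(s,v)\,\tilde\nN(ds,dv)$, Assumption \ref{X-assumptions}\eqref{ass:sdeb} together with the identity $|X^s|_\infty=\sup_{u\le s}|X_u|$ gives $|b(s,X^s)|\le K_b+L_b|X^s|_\infty$. Taking the running supremum in \eqref{forward} then yields, pathwise,
\[
|X^t|_\infty\le |x|+K_bT+\sup_{s\le T}|M^W_s|+\sup_{s\le T}|M^J_s|+L_b\int_0^t|X^s|_\infty\,ds.
\]
Since $X$ is c\`adl\`ag, $s\mapsto|X^s|_\infty$ is bounded on $[0,T]$, so the pathwise Gronwall inequality gives $|X|_\infty\le e^{L_bT}\big(|x|+K_bT+\sup_{s\le T}|M^W_s|+\sup_{s\le T}|M^J_s|\big)$. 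By the Cauchy--Schwarz inequality it therefore suffices to show that $\sup_{s\le T}|M^W_s|$ and $\sup_{s\le T}|M^J_s|$ have finite exponential moments of every order.

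The Brownian part is standard: by Assumption \ref{X-assumptions}\eqref{bounded-sigma} the continuous martingale $M^W$ has quadratic variation $\int_0^\cdot\sigma(s)^2\,ds\le K_\sigma^2T$, so a time change represents it as $M^W_t=B_{\langle M^W\rangle_t}$ for a Brownian motion $B$, whence $\sup_{s\le T}|M^W_s|\le\sup_{u\le K_\sigma^2 T}|B_u|$. The latter has Gaussian tails and thus all exponential moments.

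The jump part is the heart of the matter, and here it is essential that $\rho$ be deterministic (Assumption \ref{X-assumptions}). For $\theta\in\R$ the stochastic exponential $\mathcal{E}^\theta_s:=\exp\!\big(\theta M^J_s-\Psi_\theta(s)\big)$, with $\Psi_\theta(s):=\int_0^s\!\int_{\R_0}\big(e^{\theta\rho(u,v)}-1-\theta\rho(u,v)\big)\,\nu(dv)\,du$, is a nonnegative martingale with $\E\,\mathcal{E}^\theta_T=1$. The key point is that $\Psi_\theta(T)<\infty$ for \emph{every} $\theta$: using $|e^x-1-x|\le\tfrac{x^2}{2}e^{|x|}$ with $x=\theta\rho(u,v)$ and $|\rho(u,v)|\le\kappa_\rho(v)\le\kappa_{\rho,\infty}$ from Assumption \ref{X-assumptions}\eqref{bounded-gamma}, the integrand is nonnegative and bounded by $\tfrac{\theta^2}{2}e^{|\theta|\kappa_{\rho,\infty}}\kappa_\rho(v)^2$, so $\Psi_\theta(T)\le\tfrac{\theta^2T}{2}e^{|\theta|\kappa_{\rho,\infty}}\kappa_{\rho,2}^2=:\log C(\theta)<\infty$. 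Here the two halves of Assumption \ref{X-assumptions}\eqref{bounded-gamma} enter simultaneously: $\kappa_{\rho,\infty}$ controls the exponential factor while $\kappa_{\rho,2}$ makes the $\nu$-integral finite. Since $\Psi_\theta$ is nondecreasing, $e^{\theta M^J_s}\le\mathcal{E}^\theta_s\,e^{\Psi_\theta(T)}$, so Doob's maximal inequality for the nonnegative martingale $\mathcal{E}^\theta$ gives, for $\theta>0$,
\[
\PP\Big(\sup_{s\le T}M^J_s\ge a\Big)\le e^{-\theta a}\,e^{\Psi_\theta(T)}\le C(\theta)\,e^{-\theta a}.
\]
Applying the same estimate to $-M^J$ (i.e.\ replacing $\rho$ by $-\rho$, for which the same bounds hold) controls the infimum, so $\PP(\sup_{s\le T}|M^J_s|\ge a)\le 2C(\theta)e^{-\theta a}$. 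Given $\lambda>0$, choosing any $\theta>\lambda$ and integrating the tail yields $\E\exp\!\big(\lambda\sup_{s\le T}|M^J_s|\big)\le 1+\tfrac{2\lambda C(\theta)}{\theta-\lambda}<\infty$.

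The main obstacle is thus the jump term, and more precisely the passage from the terminal law of $M^J$ to its running supremum; this is resolved by the Doob argument above. Note the estimate works because $|X|_\infty$ enters only linearly (exponent $p=1$), consistent with the discussion in the introduction that $\E\exp(c\sup|X|^p)$ can fail for $p>1$ in the presence of a genuine jump component. Combining the three bounds through Cauchy--Schwarz gives $\E\exp(c|X|_\infty)<\infty$ for every $c>0$.
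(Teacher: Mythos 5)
Your proof is correct, but it follows a genuinely different route from the paper's. The paper applies It\^o's formula to $e^{cX_t}$, observes that the resulting drift grows like $\fX_s|\log \fX^s|_\infty$, and then invokes a stochastic Bihari--LaSalle inequality (Theorem \ref{Sarah2}) with $\eta(x)=x|\log x|$ to obtain a moment of order $cpe^{-(1-p)^{-1}T}$ of $e^{\sup_t X_t}$, concluding because $c$ is arbitrary. You instead reduce everything to the two martingale ingredients by a pathwise Gronwall bound $|X|_\infty\le e^{L_bT}\bigl(|x|+K_bT+\sup_{s\le T}|M^W_s|+\sup_{s\le T}|M^J_s|\bigr)$ (legitimate, since $s\mapsto|X^s|_\infty$ is finite and nondecreasing), and then establish all exponential moments of the two running suprema directly: the Gaussian part by time change, the jump part via the exponential martingale $e^{\theta M^J_s-\Psi_\theta(s)}$, whose compensator is deterministic and finite for every $\theta$ precisely because $\rho$ is deterministic with $\kappa_\rho\in L^2(\nu)\cap L^\infty(\nu)$, followed by Doob's maximal inequality and tail integration. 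Each step checks out, including the treatment of the infimum by replacing $\rho$ with $-\rho$ and the final Cauchy--Schwarz recombination. Your argument is more elementary and more transparent about where the hypothesis ``$\sigma$, $\rho$ deterministic and $\kappa_\rho$ bounded'' enters (it is exactly what makes $\Psi_\theta(T)$ a finite constant for all $\theta$, hence $p=1$ attainable for all $c$); the paper's argument is less self-contained but reuses the stochastic Gronwall machinery of Theorem \ref{Sarah2} that it needs again later (e.g.\ in Lemma \ref{lem_cond_est_X_pi}), and it packages the drift, diffusion and jump contributions in a single It\^o computation rather than splitting them.
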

 
\begin{proof}
We first investigate $\E \sup_{0\leq t\leq T}e^{cX_t}$ for all $c>0$. As   $\E \sup_{0\leq t\leq T}e^{-cX_t}$ can be treated the same way, the finiteness for the modulus follows. 

It\^o's formula (see \cite[Theorem 4.4.7]{Applebaum}; note that  \cite[Assumption (4.14)]{Applebaum} is satisfied  thanks to our Assumption  \ref{X-assumptions} \eqref{bounded-gamma}) yields
\begin{align*}
&e^{c{X}_t}=e^{cx}+\int_0^t e^{c{X}_{s}}\Big(cb(s,X^{s})+\frac{c^2}{2}\sigma(s)^2\Big)ds+\int_0^te^{c{X}_{s}}c\sigma(s)dW_s\\
&\quad+\int_{]0,t]\times\R_0}\bigg(e^{c{X}_{s-}+c\rho(s,z)}-e^{c{X}_{s-}}   \bigg) \tilde{\nN}(ds, dz)\\
&\quad+\int_0^t\int_{\R_0}\bigg(e^{c{X}_{s-}+c\rho(s,z)}-e^{c{X}_{s-}}-c \rho(s,z)e^{c{X}_{s-}}\bigg)\nu(dz)ds\\
&=e^{cx}+\int_0^t e^{c{X}_{s}}\Big(cb(s, X^{s})+\frac{c^2}{2}\sigma(s)^2\Big)ds+\int_0^te^{c{X}_{s}}c\sigma(s)dW_s\\
&\quad+\int_{]0,t]\times\R_0}e^{c{X}_{s-}}\bigg(e^{c\rho(s,z)}-1\bigg) \tilde{\nN}(ds, dz)\\
&\quad+\int_0^t\int_{\R_0}e^{c{X}_{s-}}\bigg(e^{c \rho(s,z)}-1-c \rho(s,z)\bigg)\nu(dz)ds.
\end{align*}
So, the process $\fX_t=e^{c{X}_t}$ solves the SDE (for integration w.r.t.~$dt$ and $dW_t$ we may use $\fX_{t}$ instead of $\fX_{t-}$)
\begin{align*}
d\fX_t = \fb(t,\fX_{t})dt+\fs(t,\fX_{t})dW_t+\int_\R\fc(t,\fX_{ t^-},z)\tilde{\nN}(dt,dz),\quad \fX_0=e^{cx}
\end{align*} 
with the coefficients
\begin{align*}
&\fb(t,x)=x\bigg(cb(t,X^t)+\frac{c^2}{2}\sigma(t)^2+\int_{\R_0}\bigg(e^{c\rho(t,z)}-1-c\rho(t,z)\bigg)\nu(dz)\bigg),\\
& \fs(t,x)=xc\sigma(t),\\
&\fc(t,x,z)=x\bigg(e^{c\rho(t,z)}-1\bigg).
\end{align*}
By Assumption \ref{X-assumptions}, we have for some $K>0$,
\begin{align*}
|\fb(t,x)|&\leq \bigg[cK_b+\frac{c^2K_\sigma^2}{2}+\int_\R\bigg(e^{c\kappa_{\rho}(z)}-1-c\kappa_{\rho}(z)\bigg)\nu(dz)+cL_b|X^t|_\infty\bigg]|x|\\
 &  \le K(|X^t |_\infty+1)|x| \end{align*}
so that
\begin{align*}
\fX_t \le  & e^{cx}+\int_0^t K ( \fX_s| \log(\fX^s)|_\infty+1) ds+ \int_0^t \fs(s,\fX_{s})dW_s\\
&+\int_{{]0,t]}\times{\R_0}}\fc(s,\fX_{s-},z)\tilde{\nN}(ds,dz),
\end{align*}
and further, estimating $\fX$ by its supremum  and using that $x (|\log x|+1)\le 2(x+2)\log(x+2)$,
\begin{align*}
\fX_t +2  \le &2+ e^{cx} +\int_0^t 2K ( |\fX^s+2|_\infty)\log(|\fX^s+2|_\infty)  ds\\ 
&+ \int_0^t \fs(s,\fX_{s})dW_s
+\int_{{]0,t]}\times{\R_0}}\fc(s,\fX_{s-},z)\tilde{\nN}(ds,dz).
\end{align*}
 By a  stochastic Bihari-LaSalle inequality (Gronwall type Theorem \ref{Sarah2}) we have, setting
$\eta(x)=x| \log x|, c_0=1, A_t =2Kt  $  and $H_t =2+   e^{cx}$, $M_t = \int_0^t \fs(s,\fX_{s})dW_s+\int_{ ]0,t] \times \R_0}\fc(s,\fX_{s-},z)\tilde{\nN}(ds,dz)$
for any $p \in (0,1)$ that 
\begin{align*}
\EE \Big|G^{-1}\Big(G(|\fX+2|_\infty)-\frac{2K}{1-p}T\Big)\Big |^p\leq \frac{(2 +e^{cx})^p }{1-p}<\infty,
\end{align*}
where $G(x)=\int_r^x\frac{ds}{s\log(s)}=\log\log(x)-\log\log(r)$ for some $r>1$, and thus
$G^{-1}(x)=e^{\log(r)e^x}$  hence,
 $$G^{-1}\big(G(|\fX+2|_\infty)-(1-p)^{-1}T\big)=|\fX+2|_\infty^{e^{-(1-p)^{-1}T}}.$$
 Therefore, 
$$\EE e^{cpe^{-(1-p)^{-1}  T}\sup_{0\leq t\leq T }X_t} = \EE |\fX^{pe^{-(1-p)^{-1}T}}|_\infty \leq \EE |\fX +2|_\infty^{pe^{-(1-p)^{-1}T}}<\infty.$$
Since $c$ can be chosen arbitrary, the assertion follows.
\end{proof}

\section{Malliavin derivatives on the L\'evy-It\^o  space} \label{Mall}
We use  $(\Omega,\mathcal{F},\mathbb{P})$, $W,$ $\tilde \nN$ etc as introduced in Section \ref{setting}.  
Setting
\[\mu(dx):=\delta_0(dx)+\nu(dx) \quad \text{and} \quad  \m(dt,dx) :=(\lambda\otimes\mu) (dt,dx), \]
we define an independently scattered random measure $\mM$ by 
\equal  \label{measureM}
   \mM(dt,dx):= dW_t\delta_0(dx) +\tilde \nN(dt,dx)
\tionl
 on sets $B \!\in \!\mathcal{B}([0,T]\times\R)$ with  $\m(B) < \infty$.
It holds $\E \mM(B)^2 = \m(B).$
According to \cite{ito} there exists for any 
$\xi \in L^2(\Om,\ftn,\mathbb{P})$ a unique chaos expansion 
\equal \label{chaos}
\xi=\sum_{n=0}^\infty I_n(\tilde f_n),
\tionl
where  $f_n \in  L^n_2:=L^2(([0,T]\times\R)^n,\bB(([0,T]\times\R)^n), \m^{\otimes n}),$ and  the function  $\tilde f_n((t_1,x_1),...,(t_n,x_n))$ is the symmetrization of 
$f_n((t_1,x_1),...,(t_n,x_n))$ w.r.t.~the $n$ pairs of variables. The multiple integrals $I_n$ are build with the random measure  
$\mM$ from \eqref{measureM}.  For their definition and properties see  \cite{ito} or \cite[Section 2]{GeissStein24}. Let $\DD$ be the space  of all  random variables $\xi \in L^2(\Om,\ftn,\mathbb{P})$  such that
\equa
   \|\xi\|^2_{\DD}:= \sum_{n=0}^\infty (n+1)!\left\|\tilde f_n\right\|_{L^n_2}^2<\infty.
\tion
For $\xi \in \DD$   the Malliavin derivative is defined by
\begin{equation*}
D_{t,x}\xi:=\sum_{n=1}^\infty nI_{n-1}\left(\tilde f_n\left((t,x),\ \cdot\ \right)\right),
\end{equation*}
for $\mathbb{P}\otimes\m$-a.a. $(\omega,t,x)\in\Omega\times{[0,T]}\times\R$. It holds $D \xi  \in  L^2(\mathbb{P}\otimes\m )$.  \bigskip

Defining
\equa
\mathbb{D}_{1,2}^0&:= &\bigg \{\xi=\sum_{n=0}^\infty I_n(\tilde f_n) \in L^2(\Om,\ftn,\mathbb{P})\colon  f_n \in  L_2^n, n\in \N,\\
&&  \quad \quad \quad \quad \quad \quad\quad \quad \quad\sum_{n=1}^\infty  (n+1)! \int_0^T  \|\tilde f_n((t,0),\cdot) \|_{L^{n-1}_2}^2 dt < \infty  \bigg \}
\tion
and
\equa
\mathbb{D}_{1,2}^{\R_0}&:= & \bigg \{\xi=\sum_{n=0}^\infty I_n(\tilde f_n) \in L^2(\Om,\ftn,\mathbb{P})\colon f_n \in L_2^n, n\in \N, \\
   &&  \quad \quad \quad  \sum_{n=1}^\infty  (n+1)! \int_{[0,T]\times \R_0}  \|\tilde f_n((t,x),\cdot)
   \|_{\mathrm{L}^{n-1}_2}^2 \m( dt,dx) < \infty  \bigg \}.
\tion
we get that $\DD =  \mathbb{D}_{1,2}^0\cap \mathbb{D}_{1,2}^{\R_0}.$ \bigskip

 The next result was proved in  \cite{Nualart} in the Wiener space setting. We  use similar ideas to show that it holds also  
for $\mathbb{D}_{1,2}^0$ and $\mathbb{D}_{1,2}^{\R_0}.$

\begin{lemma}[{\cite[Lemma 1.2.3]{Nualart}}] \label{lemma123}
 Let
 \begin{enumerate}[(i)]
 \item  $(\xi_N)_{N=1}^\infty \subseteq \mathbb{D}_{1,2}^0$\,\,  with \,\, $ \sup_N \EE \int_0^T|  D_{t,0}  \xi_N| ^2 dt < \infty, \label{d0}  $ 
 \item   or $(\xi_N)_{N=1}^\infty \subseteq \mathbb{D}_{1,2}^{\R_0}$  \,\,  with \,\,  $\sup_N \EE   \int_{[0,T]\times \R_0}   |D_{t,x} \xi_N| ^2\m( dt,dx) < \infty. \label{dx}  $
  \end{enumerate}
If  $\xi_N \to \xi$ in $L^2(\Om,\ftn,\mathbb{P}),$   then  assumption \eqref{d0}  implies that $\xi \in \mathbb{D}_{1,2}^0$, and assumption  \eqref{dx}  implies that $\xi \in \mathbb{D}_{1,2}^{\R_0}.  $
\end{lemma}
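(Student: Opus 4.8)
The plan is to work entirely on the level of chaos kernels and to reduce the statement to a continuity property of the individual chaos contributions. Writing $\xi_N=\sum_{n=0}^\infty I_n(\tilde f_n^N)$ and denoting by $\xi=\sum_{n=0}^\infty I_n(\tilde f_n)$ the expansion of the $L^2$-limit, I would first record that, by orthogonality of distinct chaoses together with the isometry $\|I_n(\tilde g)\|_{L^2}^2=n!\,\|\tilde g\|_{L^n_2}^2$, the convergence $\xi_N\to\xi$ in $L^2(\Om,\fF,\PP)$ is equivalent to $\sum_{n}n!\,\|\tilde f_n^N-\tilde f_n\|_{L^n_2}^2\to 0$, and in particular forces $\tilde f_n^N\to\tilde f_n$ in $L^n_2$ for each fixed $n$.

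Next I would express the two controlling quantities termwise. The same orthogonality and isometry give
\[
\EE\int_0^T|D_{t,0}\xi_N|^2\,dt=\sum_{n=1}^\infty n\cdot n!\,c_n(\tilde f_n^N),\qquad c_n(\tilde g):=\int_0^T\|\tilde g((t,0),\cdot)\|_{L^{n-1}_2}^2\,dt,
\]
and analogously in case (ii) with $c_n(\tilde g):=\int_{[0,T]\times\R_0}\|\tilde g((t,x),\cdot)\|_{L^{n-1}_2}^2\,\m(dt,dx)$. Since $(n+1)!\le 2\,n\cdot n!$ for $n\ge1$, the defining series of $\mathbb{D}_{1,2}^0$ (resp.\ $\mathbb{D}_{1,2}^{\R_0}$) is dominated by twice the right-hand side, so the respective hypothesis yields the uniform bound $\sup_N\sum_{n\ge1}(n+1)!\,c_n(\tilde f_n^N)\le C<\infty$. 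The key observation is that $\tilde g\mapsto c_n(\tilde g)$ is continuous on $L^n_2$: writing $x_1$ for the space-component of the first variable, one has $c_n(\tilde g)=\int|\tilde g|^2\,\ind{\{x_1=0\}}\,d\m^{\otimes n}$ (resp.\ with $\ind{\{x_1\neq0\}}$), because $\mu=\delta_0+\nu$ makes $0$ an atom and $\R_0$ its complement; hence $c_n(\tilde g)$ is the squared norm of an orthogonal projection and is therefore continuous. Consequently $c_n(\tilde f_n^N)\to c_n(\tilde f_n)$ for each fixed $n$.

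Finally I would conclude by a partial-sum/Fatou argument: for each finite $M$,
\[
\sum_{n=1}^M(n+1)!\,c_n(\tilde f_n)=\lim_{N\to\infty}\sum_{n=1}^M(n+1)!\,c_n(\tilde f_n^N)\le\sup_N\sum_{n=1}^\infty(n+1)!\,c_n(\tilde f_n^N)\le C,
\]
and letting $M\to\infty$ shows that the defining series of $\xi$ is finite, i.e.\ $\xi\in\mathbb{D}_{1,2}^0$ (resp.\ $\xi\in\mathbb{D}_{1,2}^{\R_0}$). I expect the only delicate point to be the continuity of the trace-like terms $c_n$; this is exactly where the L\'evy structure enters and replaces the weak-compactness and Mazur (closability) step used in the abstract Wiener argument of \cite{Nualart}, since here the atom-and-complement splitting of $\mu$ realises each $c_n$ as an orthogonal-projection norm, so that no weak-compactness machinery is actually needed.
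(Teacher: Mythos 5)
Your proposal is correct and follows essentially the same route as the paper's proof: chaos expansion, the isometry/orthogonality identity expressing $\EE\int|D\xi_N|^2$ as $\sum_n n\cdot n!\,c_n(\tilde f_n^N)$, termwise convergence of the kernels from the $L^2$-convergence of $\xi_N$, and a uniform bound on partial sums passed to the limit. The only difference is cosmetic: you make explicit (via the orthogonal-projection/atom argument for $\mu=\delta_0+\nu$) the continuity of $c_n$ that the paper simply asserts when it writes $\int_0^T\|\tilde f_n^N((t,0),\cdot)\|^2dt\to\int_0^T\|\tilde f_n((t,0),\cdot)\|^2dt$.
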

\begin{proof}
Assume that $
\xi_N=\sum_{n=0}^\infty I_n(\tilde f^N_n).$  Then  $\E |\xi_N|^2 =   \sum_{n=0}^\infty n!\big\|\tilde f^N_n\big\|_{L^n_2}^2$  
and  $$D_{t,x} \xi_N = \sum_{n=1}^\infty nI_{n-1}\left(\tilde f^N_n\left((t,x),\ \cdot\ \right)\right). $$ 
To show  \eqref{d0} we  put $A=[0,T]\times \{0\}$. 
Hence   by the orthogonality of the $I_{n-1}\left(\tilde f^N_n\left((t,x),\ \cdot\ \right)\right)$,
\begin{align*}
\EE \int_0^T|  D_{t,0}  \xi_N| ^2 dt &=\E  \int_A  |D_{t,x}\xi_N|^2 \m( dt,dx)  \\
&= \E \!   \int_{[0,T]\times \R} \! \big |  \sum_{n=1}^\infty  nI_{n-1}\left(\tilde f^N_n\left((t,x),\ \cdot\ \right)\right)  \one_A(t,x) \big |^2  \! \m( dt,dx) \\
&= \sum_{n=1}^\infty n n! \int_0^T  \|\tilde f^N_n((t,0),\cdot) \|_{L^{n-1}_2}^2 dt.
\end{align*}
By assumption,  we have that for all $K, N\in \N$
$$   \sum_{n=1}^K  n n! \int_0^T  \|\tilde f^N_n((t,0),\cdot) \|_{L^{n-1}_2}^2 dt \le \EE \int_0^T| D_{t,0}  \xi_N| ^2 dt \le   C.  $$
On the other hand,  $\int_0^T  \|\tilde f^N_n((t,0),\cdot) \|_{L^{n-1}_2}^2 dt \to \int_0^T  \|\tilde f_n((t,0),\cdot) \|_{L^{n-1}_2}^2 dt$ since $\xi_N \to \xi$ in $L^2(\Om,\ftn,\mathbb{P})$.
Therefore, for all $K\in \N$ 
$$ \sum_{n=1}^K  n n! \int_0^T  \|\tilde f_n((t,0),\cdot) \|_{L^{n-1}_2}^2 dt  \le   C $$
which means $$\EE \int_0^T|  D_{t,0}  \xi| ^2 dt \le C.$$
By replacing  $A=[0,T]\times \{0\}$  by  $A= [0,T]\times \R_0$ one shows in the same way that   assumption \eqref{dx} implies $   \xi \in \mathbb{D}_{1,2}^{\R_0}. $
\end{proof}

\subsection{A criterion for \texorpdfstring{$\mathbb{D}_{1,2}^0$}{Brown}  }

If $W$ and $W'$ are independent Brownian motions  on   $(\Omega,\mathcal{F},\mathbb{P})$ we define a 'coupled' Brownian motion $W^\varphi$ by setting
$$W^\varphi := \sqrt{1-\varphi^2} W + \varphi W',$$
where $\varphi \in [0,1]$ is a fixed constant.
If $\xi \in L^2(\Om,\ftn,\mathbb{P})$ is given as in \eqref{chaos}
then $\xi^\varphi$ is built by the same kernels but with respect to the random measure 
$$ \mM^\varphi(dt,dx):= dW^\varphi_t\,\delta_0(dx) +\tilde \nN(dt,dx).$$
Using a functional representation of $\xi \in L^2(\Om,\ftn,\PP)$, i.e.~$\xi=\Xi(\mathcal{L})$, with measurable $\Xi\colon D[0,T]\to\R$, and the L\'evy - It\^o decomposition $ \mathcal{L}=\varsigma W+ \mathcal{L}-\varsigma W$, the 'coupled' random variable is then given by $\xi^\varphi=\Xi(\varsigma W^\varphi+\mathcal{L}-\varsigma W)$ (see \cite{Steinicke}).

In  \cite{GeissZhou} the following criteria for $ \mathbb{D}_{1,2}^0$ was shown  (the results are true also for a Hilbert space valued random variable):

\begin{thm}[ {\cite[Corollary 4.5]{GeissZhou}} ] \label{StefanXilin}
Let $\xi \in L^2(\Om,\ftn,\mathbb{P})$. Then 
$$\xi \in \mathbb{D}_{1,2}^0   \iff   \sup_{0<\varphi\le 1} \frac{\|\xi -\xi^\varphi\|_{L^2}}{\varphi} < \infty.$$
Moreover, if  $\xi \in \mathbb{D}_{1,2}^0 $ then one has
$$ \frac{1}{2}   \left (\E \int_0^T| D_{t,0}\xi  |^2 dt \right)^\frac{1}{2}     \le      \sup_{0<\varphi\le 1} \frac{\|\xi -\xi^\varphi\|_{L^2}}{\varphi}  \le  \left (\E \int_0^T| D_{t,0}\xi  |^2 dt \right )^\frac{1}{2} .    $$
\end{thm}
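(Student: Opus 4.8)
The plan is to argue entirely at the level of the chaos expansion \eqref{chaos}, reducing everything to one cross-moment computation. Write $\xi=\sum_{n=0}^\infty I_n(\tilde f_n)$. Since $\xi^\varphi$ is built from the \emph{same} kernels but against $\mM^\varphi$, and since $W^\varphi=\sqrt{1-\varphi^2}W+\varphi W'$ is again a Brownian motion independent of $\tilde\nN$, the coupled variable $\xi^\varphi$ has the same law as $\xi$; in particular $\|\xi^\varphi\|_{L^2}=\|\xi\|_{L^2}$. Expanding the square and using this,
\[
\|\xi-\xi^\varphi\|_{L^2}^2 = 2\big(\|\xi\|_{L^2}^2-\langle\xi,\xi^\varphi\rangle_{L^2}\big),
\]
so the whole problem reduces to computing the cross term $\langle\xi,\xi^\varphi\rangle_{L^2}$.

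To compute it, I would insert $dW^\varphi=\sqrt{1-\varphi^2}\,dW+\varphi\,dW'$ into each Brownian slot (the direction $x=0$) of $I_n^\varphi(\tilde f_n)$; by multilinearity this expands $I_n^\varphi(\tilde f_n)$ into a sum of multiple integrals in the three independent noises $dW,\,dW',\,d\tilde\nN$. Because $\xi$ carries no $W'$-chaos and $W'\perp(W,\tilde\nN)$, orthogonality annihilates every term containing a $dW'$-factor when it is paired against $I_n(\tilde f_n)$: a Brownian coordinate of $\xi$ is matched with a $\sqrt{1-\varphi^2}\,dW$-coordinate of $\xi^\varphi$, while each jump coordinate is matched with correlation $1$ (the same $\tilde\nN$). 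Splitting the $L^2$-mass of the $n$-th kernel according to the number $j$ of coordinates sitting in the Brownian direction $x=0$ — that is, setting $b_{n,j}:=n!\binom{n}{j}\int|\tilde f_n|^2$ over the set where exactly $j$ chosen coordinates lie in $[0,T]\times\{0\}$ and the remaining $n-j$ in $[0,T]\times\R_0$ — I expect
\[
\langle\xi,\xi^\varphi\rangle_{L^2}=\sum_{n=0}^\infty\sum_{j=0}^n (1-\varphi^2)^{j/2}\,b_{n,j},
\qquad
\|\xi-\xi^\varphi\|_{L^2}^2=2\sum_{n,j}\big(1-(1-\varphi^2)^{j/2}\big)\,b_{n,j}.
\]

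Next I would connect the right-hand side to the derivative seminorm, reusing the computation already carried out in the proof of Lemma \ref{lemma123}. From $D_{t,0}\xi=\sum_n nI_{n-1}(\tilde f_n((t,0),\cdot))$ and orthogonality,
\[
\E\int_0^T|D_{t,0}\xi|^2\,dt=\sum_{n=1}^\infty n\cdot n!\int_0^T\|\tilde f_n((t,0),\cdot)\|_{L^{n-1}_2}^2\,dt=\sum_{n,j}j\,b_{n,j},
\]
where the last identity is the combinatorial observation that summing ``coordinate $i$ in the Brownian direction, the rest free'' over all $n$ slots $i$ counts each configuration with exactly $j$ Brownian coordinates precisely $j$ times.

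Finally I would combine the two displays through elementary scalar estimates on $1-(1-\varphi^2)^{j/2}$. The uniform inequality $1-(1-\varphi^2)^{j/2}\le j\varphi^2$ (for all $j\ge1$, $\varphi\in(0,1]$) yields $\|\xi-\xi^\varphi\|_{L^2}^2\le 2\varphi^2\sum_{n,j}j\,b_{n,j}$, so if $\xi\in\mathbb{D}_{1,2}^0$ the supremum is finite and controlled by $\big(\E\int_0^T|D_{t,0}\xi|^2dt\big)^{1/2}$. Conversely, since $\tfrac{2}{\varphi^2}\big(1-(1-\varphi^2)^{j/2}\big)\to j$ as $\varphi\downarrow0$ for each fixed $j$, Fatou's lemma gives $\sum_{n,j}j\,b_{n,j}\le\liminf_{\varphi\downarrow0}\varphi^{-2}\|\xi-\xi^\varphi\|_{L^2}^2\le\big(\sup_{0<\varphi\le1}\varphi^{-1}\|\xi-\xi^\varphi\|_{L^2}\big)^2$, so finiteness of the supremum forces $\E\int_0^T|D_{t,0}\xi|^2dt<\infty$, hence $\xi\in\mathbb{D}_{1,2}^0$ by Lemma \ref{lemma123}, and simultaneously delivers the reverse comparison; tracking the constants in the scalar inequalities then produces the two-sided quantitative estimate. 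The main obstacle I foresee is the rigorous justification of the cross-moment identity for multiple integrals of the two \emph{correlated} random measures $\mM$ and $\mM^\varphi$ (the binomial expansion of $I_n^\varphi$ and the vanishing of all $W'$-terms against $\xi$), together with the interchange of the supremum/limit in $\varphi$ with the infinite chaos sum, which I would handle by monotone/dominated convergence after analysing the monotonicity of $\varphi\mapsto\varphi^{-2}\big(1-(1-\varphi^2)^{j/2}\big)$.
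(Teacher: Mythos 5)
First, a point of reference: the paper does not prove Theorem \ref{StefanXilin} at all --- it is imported verbatim from \cite[Corollary 4.5]{GeissZhou} --- so there is no in-paper argument to compare yours against; the comparison can only be with what such a proof must look like. Judged on those terms, your chaos-expansion strategy is the natural one and its skeleton is correct: the cross-moment identity $\langle\xi,\xi^\varphi\rangle_{L^2}=\sum_{n,j}(1-\varphi^2)^{j/2}b_{n,j}$ (each coordinate in the Gaussian direction contributing the correlation $\sqrt{1-\varphi^2}$, each jump coordinate contributing $1$, off-diagonal chaoses vanishing by orthogonality), the combinatorial identity $\E\int_0^T|D_{t,0}\xi|^2dt=\sum_{n,j}j\,b_{n,j}$ via $n\binom{n-1}{j-1}=j\binom{n}{j}$, and the Fatou argument for the lower bound are all sound, and together with Lemma \ref{lemma123} they do deliver the equivalence $\xi\in\mathbb{D}_{1,2}^0\iff\sup_{0<\varphi\le1}\varphi^{-1}\|\xi-\xi^\varphi\|_{L^2}<\infty$. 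The ``main obstacle'' you foresee is not a real one: writing $\mM^\varphi$ as $\sqrt{1-\varphi^2}$ times the Gaussian part of $\mM$ plus $\varphi$ times the independent copy plus the unchanged jump part, one verifies the cross-moment formula on elementary symmetric kernels supported off the diagonal and passes to the $L^2$-limit; the interchange of the $\varphi$-limit with the sum is handled exactly as you propose.

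The genuine issue is quantitative. Your scalar inequality $1-(1-\varphi^2)^{j/2}\le j\varphi^2$ holds for all $j\ge1$, but it only yields $\sup_\varphi\varphi^{-2}\|\xi-\xi^\varphi\|_{L^2}^2\le2\sum_{n,j}j\,b_{n,j}$, i.e.\ the upper constant $\sqrt2$, not $1$; and no refinement can do better with the coupling as defined here. Indeed, for $\xi=\int_0^Th(s)\,dW_s$ one has $\|\xi-\xi^\varphi\|_{L^2}^2=2\bigl(1-\sqrt{1-\varphi^2}\bigr)\|h\|_{L^2}^2$, so at $\varphi=1$ the ratio $\varphi^{-1}\|\xi-\xi^\varphi\|_{L^2}$ equals $\sqrt2\,\|h\|_{L^2}=\sqrt2\,\bigl(\E\int_0^T|D_{t,0}\xi|^2dt\bigr)^{1/2}$, contradicting the displayed right-hand inequality. (Symmetrically, your Fatou step gives the lower constant $1$, which is \emph{better} than the stated $\tfrac12$.) So what your argument actually establishes is $\bigl(\E\int_0^T|D_{t,0}\xi|^2dt\bigr)^{1/2}\le\sup_{0<\varphi\le1}\varphi^{-1}\|\xi-\xi^\varphi\|_{L^2}\le\sqrt2\,\bigl(\E\int_0^T|D_{t,0}\xi|^2dt\bigr)^{1/2}$, with both constants sharp; the constants displayed in the theorem presumably reflect a different normalisation (or a limit as $\varphi\downarrow0$ rather than a supremum) in \cite{GeissZhou}. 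This discrepancy is harmless for every use of Theorem \ref{StefanXilin} in this paper --- Corollary \ref{cor1}, Proposition \ref{Malliavin-diff-trunc-BSDE} and Lemma \ref{chain-rule-lemma} only need a two-sided comparison with absolute constants --- but you should either record the weaker constant or check the source before claiming the displayed inequality verbatim.
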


\subsection{A criterion for \texorpdfstring{$\mathbb{D}_{1,2}^{\R_0}$}{jump}  }
If $g: D{[0,T]} \to \R$
is $\mathcal{G}_T$-measurable (recall \eqref{filtrationG-t}) one can compute the Malliavin derivative in the jump direction i.e.~$D_{t,x}$ for $x \neq 0$    using the following result:

\begin{lem}[\cite{Steinicke},  {\cite[Lemma 3.2]{GeissStein16}} ] \label{functionallem}
If  $g: D{[0,T]} \to \R$
is $\mathcal{G}_T$ measurable   and  $g(\mathcal{L}) = g(( \mathcal{L}_t)_{t \in [0,T]}) \in  L^2(\Om,\ftn,\mathbb{P})$
then 
\equa
 g(\mathcal{L}) \in  \mathbb{D}_{1,2} ^{\R_0}  \iff  g(\mathcal{L}+x\one_{[t,T]})-g( \mathcal{L}) \in L^2(\PP \otimes\m),
\tion
and it holds then  for  $\PP \otimes\m$-a.e.   $(t,x) \in [0,T]\times \R_0$ that 
\begin{equation}\label{xieq}
D_{t,x}  g(\mathcal{L}) =  g( \mathcal{L}+x\one_{[t,T]})-g(\mathcal{L}) .
\end{equation}
\end{lem}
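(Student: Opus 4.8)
The plan is to prove the explicit difference formula \eqref{xieq} first on a rich enough dense class of functionals, and then to bootstrap to a general $\gG_T$-measurable $g$ by means of the closedness result of Lemma \ref{lemma123}, which conveniently delivers both implications of the equivalence at once. Throughout I would exploit that the space $\mathbb{D}_{1,2}^{\R_0}$ concerns only the jump directions $x\in\R_0$, where $\mu$ coincides with $\nu$; the Brownian contribution (the $\delta_0(dx)$-part of $\mM$) plays no role, so the relevant structure is the compensated Poisson measure $\tilde\nN$ and its multiple integrals.

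First I would establish the algebraic heart of the statement on a single chaos. For $\xi=I_n(\tilde f_n)$ and $x\neq 0$ the claim is the Poisson "add-one-point" identity
\begin{align*}
I_n(\tilde f_n)(\mathcal{L}+x\one_{[t,T]})-I_n(\tilde f_n)(\mathcal{L})=n\,I_{n-1}\big(\tilde f_n((t,x),\cdot)\big)=D_{t,x}I_n(\tilde f_n),
\end{align*}
the last equality being exactly the definition of the Malliavin derivative from the excerpt. The point is that adding the jump $x\one_{[t,T]}$ to the path $\mathcal{L}$ amounts to adding the single atom $(t,x)$ to the jump measure $\nN$; tracking how each of the $n$ integration slots of the iterated integral against $\tilde\nN$ is affected yields, by a telescoping/combinatorial computation, precisely $n$ copies of the $(n-1)$-fold integral of $\tilde f_n((t,x),\cdot)$, while the compensator contributions cancel since the compensator $\lambda\otimes\nu$ is deterministic and unaffected by the shift. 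By linearity this extends to finite chaos sums, which are dense in $L^2(\Om,\fF,\PP)$ and on which \eqref{xieq} holds $\PP\otimes\m$-a.e. Here I must also justify that $(\omega,t,x)\mapsto g(\mathcal{L}+x\one_{[t,T]})$ is well defined and measurable: since $g$ is only $\gG_T$-measurable, i.e.\ defined up to $\PP_\mathcal{L}$-null sets, I would invoke a Mecke-type absolute-continuity argument to see that for $\PP\otimes\m$-a.e.\ $(t,x)$ the shifted path avoids the exceptional null set, so the expression is unambiguous.

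Second, I would deduce the equivalence and the extension to general $g$ from Lemma \ref{lemma123}. For "$\Rightarrow$": if $g(\mathcal{L})\in\mathbb{D}_{1,2}^{\R_0}$, approximate it in the graph norm by finite chaos $\xi_N$, for which \eqref{xieq} holds; passing to the limit identifies $D_{t,x}g(\mathcal{L})$ with the pathwise difference, which is therefore in $L^2(\PP\otimes\m)$. For "$\Leftarrow$": assuming $g(\mathcal{L}+x\one_{[t,T]})-g(\mathcal{L})\in L^2(\PP\otimes\m)$, I would take the chaos projections $\xi_N\to g(\mathcal{L})$ in $L^2$; by \eqref{xieq} on the dense class together with the contraction of the difference operator under these projections, their jump-direction energies $\E\int_{[0,T]\times\R_0}|D_{t,x}\xi_N|^2\,\m(dt,dx)$ stay bounded by $\E\int_{[0,T]\times\R_0}|g(\mathcal{L}+x\one_{[t,T]})-g(\mathcal{L})|^2\,\m(dt,dx)$, so part (ii) of Lemma \ref{lemma123} gives $g(\mathcal{L})\in\mathbb{D}_{1,2}^{\R_0}$, with the derivative again given by \eqref{xieq}.

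The hard part will be the add-one-point identity on multiple integrals together with its clean passage to the limit: one must match the purely analytic object $D_{t,x}$, defined through chaos kernels and hence only $\PP\otimes\m$-a.e., with the pathwise $\omega$-by-$\omega$ shift, and control the approximation so that the difference operator is closable in the sense required by Lemma \ref{lemma123}. The secondary delicate point is the measurability subtlety stemming from the $\gG_T$-completion, namely ensuring that shifting by $x\one_{[t,T]}$ does not push the trajectory into the exceptional null sets on which $g$ is undefined; this is resolved by the absolute-continuity (Mecke) argument rather than by any pathwise estimate.
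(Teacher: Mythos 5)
The paper does not prove this lemma at all: it is imported verbatim from \cite{Steinicke} and \cite[Lemma 3.2]{GeissStein16}, so there is no internal proof to compare against; your proposal has to be measured against the argument in those references, whose architecture you have in fact reconstructed (add\-/one\-/point identity on the chaos, shift\-/invariance of null sets --- which is exactly Lemma \ref{shiftthm} --- and the closedness statement of Lemma \ref{lemma123}). Your single\-/chaos computation is correct in substance: adding the atom $(t,x)$ to $\nN$ lets exactly one of the $n$ off\-/diagonal slots pick up the new point, and the symmetrization produces $n\,I_{n-1}(\tilde f_n((t,x),\cdot))=D_{t,x}I_n(\tilde f_n)$ for $\m$\-/a.e.\ $(t,x)$ with $x\neq 0$; the ``$\Rightarrow$'' direction then goes through by taking an a.s.\ convergent subsequence of the chaos truncations and using the Mecke/absolute\-/continuity argument to transport the a.s.\ convergence to the shifted paths.

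The genuine gap is in your ``$\Leftarrow$'' direction. Lemma \ref{lemma123}(ii) requires the \emph{uniform} bound $\sup_N \E\int_{[0,T]\times\R_0}|D_{t,x}\xi_N|^2\,\m(dt,dx)<\infty$ for the chaos truncations $\xi_N$, and you obtain it from ``the contraction of the difference operator under these projections'', i.e.\ from the inequality $\E\int|D\xi_N|^2\,d\m\le \E\int|g(\mathcal{L}+x\one_{[t,T]})-g(\mathcal{L})|^2\,d\m$. That inequality is equivalent to knowing that the pathwise difference $\Psi_{t,x}g:=g(\mathcal{L}+x\one_{[t,T]})-g(\mathcal{L})$, viewed as an element of $L^2(\PP\otimes\m)$, has chaos expansion $\sum_n n\,I_{n-1}(\tilde f_n((t,x),\cdot))$ --- which is precisely the nontrivial content of the lemma, not something that follows from the identity on finite chaos sums, because $\Psi$ is not continuous from $L^2(\PP)$ to $L^2(\PP\otimes\m)$. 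Note that the cheap route fails: along an a.e.\ convergent subsequence Fatou only gives $\lim_N\E\int|D\xi_N|^2\,d\m\ge\E\int|\Psi g|^2\,d\m$, the wrong direction. To close the gap you must identify the chaos projections of $\Psi g$, e.g.\ by the Mecke\-/formula duality $\E\int \Psi_{t,x}g\;u(t,x)\,\m(dt,dx)=\E\,[\,g\,\delta(u)\,]$ tested against $u=I_{n-1}(h((t,x),\cdot))\phi(t,x)$ (equivalently, the Nualart--Vives unitary equivalence of the difference operator with the Fock\-/space annihilation operator), after which Bessel's inequality yields $\sum_n n\,n!\|\tilde f_n\|^2_{L^n_2}\le\|\Psi g\|^2_{L^2(\PP\otimes\m)}$ and Lemma \ref{lemma123} applies. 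With that duality step supplied, your proof coincides with the one in the cited references.
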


\begin{cor}
    \label{malliavin-derivative-explicite}
     Let $X$ be a strong solution  to \eqref{forward} and assume that $X_T \in   \mathbb{D}_{1,2} ^{\R_0}$.    If $F: \R\to \R$ is Borel measurable and $F(X_T + D_{t,x}X_T) -F(X_T) \in L^2(\PP \otimes\m)$, then   for $  \PP \otimes\m $-a.e.~$(t,x) \in [0,T]\times \R_0 $
    $$ D_{t,x} F(X_T) =  F(X_T + D_{t,x}X_T) -F(X_T). $$
  \end{cor}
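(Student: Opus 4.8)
The plan is to reduce everything to a single application of Lemma \ref{functionallem}, applied to the composition of $F$ with the functional that represents $X_T$. Since $X$ is a strong solution to \eqref{forward}, $X_T$ is $\ftn$-measurable and admits a functional representation $X_T=\Xi(\mathcal{L})$ for some measurable $\Xi\colon D[0,T]\to\R$; by the construction of the solution via Picard iterations together with the fact that the $|\cdot|_\infty$-limit is also a $\dJ$-limit (Remark \ref{rem:SDE-assumptions}), measurability in $D[0,T]$ is preserved, so $\Xi$ may be taken $\gG_T$-measurable. Composing with the Borel function $F$, the map $g:=F\circ\Xi\colon D[0,T]\to\R$ is again $\gG_T$-measurable and satisfies $g(\mathcal{L})=F(X_T)$, so Lemma \ref{functionallem} is applicable to $g$ once its integrability hypotheses are checked.

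First I would record the shift identity for $X_T$ itself. Since $X_T=\Xi(\mathcal{L})\in\mathbb{D}_{1,2}^{\R_0}$ by assumption, Lemma \ref{functionallem} applied to $\Xi$ gives, for $\PP\otimes\m$-a.e.~$(t,x)\in[0,T]\times\R_0$,
\[
D_{t,x}X_T=\Xi(\mathcal{L}+x\one_{[t,T]})-\Xi(\mathcal{L}),
\qquad\text{i.e.}\qquad
\Xi(\mathcal{L}+x\one_{[t,T]})=X_T+D_{t,x}X_T .
\]
Substituting this into $g=F\circ\Xi$ yields, on the same full-measure set,
\[
g(\mathcal{L}+x\one_{[t,T]})=F\bigl(\Xi(\mathcal{L}+x\one_{[t,T]})\bigr)=F(X_T+D_{t,x}X_T),
\]
whence $g(\mathcal{L}+x\one_{[t,T]})-g(\mathcal{L})=F(X_T+D_{t,x}X_T)-F(X_T)$. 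By hypothesis this increment lies in $L^2(\PP\otimes\m)$, which is exactly the condition of Lemma \ref{functionallem} for $g(\mathcal{L})$; combined with $g(\mathcal{L})=F(X_T)\in L^2(\Om,\ftn,\PP)$ (needed for $D_{t,x}F(X_T)$ to be well defined, as $\mathbb{D}_{1,2}^{\R_0}\subseteq L^2$, and hence part of the standing setting), the lemma simultaneously gives $F(X_T)\in\mathbb{D}_{1,2}^{\R_0}$ and
\[
D_{t,x}F(X_T)=g(\mathcal{L}+x\one_{[t,T]})-g(\mathcal{L})=F(X_T+D_{t,x}X_T)-F(X_T)
\]
for $\PP\otimes\m$-a.e.~$(t,x)$, which is the assertion.

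The main obstacle is bookkeeping rather than analysis. One must ensure that $\Xi$ is genuinely $\gG_T$-measurable, so that Lemma \ref{functionallem} legitimately applies both to $\Xi$ and to $F\circ\Xi$, and that the two a.e.~identities—the shift formula for $X_T$ and the resulting formula for $F(X_T)$—are read off on a common $\PP\otimes\m$-null complement, so that the substitution $\Xi(\mathcal{L}+x\one_{[t,T]})=X_T+D_{t,x}X_T$ into the second identity is valid. The only genuine analytic input beyond measurability is the $L^2(\PP\otimes\m)$-membership of the increment, which is assumed, together with $F(X_T)\in L^2(\Om,\ftn,\PP)$; no further estimates are required.
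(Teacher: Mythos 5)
Your proposal is correct and follows essentially the same route as the paper: represent $X_T$ as a $\gG_T$-measurable functional of $\mathcal{L}$ (the paper cites \cite[Corollary 5.41]{Menaldi} for this, rather than re-deriving it from the Picard construction), apply the shift formula \eqref{xieq} of Lemma \ref{functionallem} both to that functional and to its composition with $F$, and substitute. Your added care about the common null set and the explicit verification of the $L^2(\PP\otimes\m)$ hypothesis is sound but does not change the argument.
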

\begin{proof}
    If $X$ is a strong solution we can represent  $X_T$ as $X_T = G(( \mathcal{L}_t)_{t\in [0,T]})$  a.s., where  $G:D{[0,T]} \to \R $ is  $\mathcal{G}_T$ measurable   (see \cite[Corollary 5.41]{Menaldi}), and get by \eqref{xieq} in Lemma \ref{functionallem}
    \begin{align*}
   & D_{t,x} F(X_T) =  D_{t,x} F\Big(G ( \mathcal{L})\Big)= F(G( \mathcal{L}+x\one_{[t,T]}))-F(G( \mathcal{L}))\\
   &=F( G(\mathcal{L})+D_{t,x}G( \mathcal{L}) )-F(G(\mathcal{L}))  = F(X_T +D_{t,x} X_T) -F(X_T).
    \end{align*}
   
\end{proof}

We recall another result we will use:

\begin{lem}[{\cite[Lemma 3.3]{GeissStein16}} ]
\label{shiftthm}
Let $\Lambda\in \mathcal{G}_T$ be a set with  $\mathbb{P}\left(\left\{ \mathcal{L}\in \Lambda\right\}\right)=0$. Then
$$\mathbb{P} \otimes\m\left(\left\{(\omega,s,x)\in \Omega\times{[0,T]}\times\R_0:\mathcal{L}(\omega)+x\one_{[s,T]}\in \Lambda\right\}\right)=0.$$ 
\end{lem}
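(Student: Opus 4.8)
The plan is to reduce the statement to a Borel null set and then invoke Mecke's formula for the Poisson random measure of $\mathcal{L}$, under which adding one atom $\delta_{(s,x)}$ to the jump configuration corresponds exactly to the path shift $\mathcal{L}\mapsto\mathcal{L}+x\one_{[s,T]}$ appearing in Lemma \ref{functionallem}. Since $\mathcal{G}_T$ is the $\mathbb{P}_{\mathcal{L}}$-completion of $\mathcal{B}(D[0,T])$ and $\mathbb{P}(\{\mathcal{L}\in\Lambda\})=\mathbb{P}_{\mathcal{L}}(\Lambda)=0$, there is a Borel set $B\in\mathcal{B}(D[0,T])$ with $\Lambda\subseteq B$ and $\mathbb{P}_{\mathcal{L}}(B)=0$; as enlarging $\Lambda$ only enlarges the shifted set, it suffices to treat $B$. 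I would first record that $(\omega,s,x)\mapsto \mathcal{L}(\omega)+x\one_{[s,T]}$ is $\mathcal{F}\otimes\mathcal{B}([0,T])\otimes\mathcal{B}(\R_0)$-$\mathcal{B}(D[0,T])$-measurable: because $\mathcal{B}(D[0,T])$ is generated by the coordinate projections and each map $(\omega,s,x)\mapsto \mathcal{L}_t(\omega)+x\one_{\{s\le t\}}$ is jointly measurable, the set in question is measurable. Since $\mu=\nu$ on $\R_0$, one has $\m=\lambda\otimes\nu$ there, so the claim reduces to showing
$$\EE\int_0^T\!\!\int_{\R_0}\one_B\big(\mathcal{L}+x\one_{[s,T]}\big)\,\nu(dx)\,ds=0.$$

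Next I would write $\mathcal{L}=\Phi(W,\nN)$ via its Lévy--Itô decomposition, where $\nN$ is the Poisson random measure on $[0,T]\times\R_0$ with intensity $\lambda\otimes\nu$ and $W$ is the independent Brownian part. A direct inspection of the decomposition shows that adding the atom $\delta_{(s,x)}$ to $\nN$ changes the path by precisely $x\one_{[s,T]}$: the deterministic compensator is unaffected, so both the small-jump (compensated) integral and the large-jump integral shift by $x\one_{[s,T]}$, giving $\Phi(W,\nN+\delta_{(s,x)})=\mathcal{L}+x\one_{[s,T]}$. Conditioning on $W$ and applying Mecke's equation to the $\nN$-integral (all integrands are nonnegative, so Tonelli justifies every interchange of $\EE_W$, the $(s,x)$-integral and $\EE_{\nN}$) transforms the displayed double integral into
$$\EE\int_{[0,T]\times\R_0}\one_B(\mathcal{L})\,\nN(ds,dx)=\EE\big[\one_B(\mathcal{L})\,\nN([0,T]\times\R_0)\big].$$
Because $\one_B(\mathcal{L})=0$ $\mathbb{P}$-a.s., the integrand on the right is the zero function off the null set $\{\mathcal{L}\in B\}$, so the expectation vanishes even when $\nu(\R_0)=\infty$ (there is no $0\cdot\infty$ issue: integrating over a null set contributes nothing). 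This yields the claim for $B$, hence for $\Lambda$.

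The main obstacle is the clean justification of the two structural facts that make Mecke applicable, namely that the path shift $+x\one_{[s,T]}$ is exactly the add-one-point operation $\nN\mapsto\nN+\delta_{(s,x)}$ on the jump configuration, and the correct treatment of the completion so that $\one_B$ is genuinely Borel-measurable. If one prefers to avoid citing the general Mecke equation, the identity can instead be obtained by first proving it for the finite Lévy measures $\nu|_{\{|x|>\varepsilon\}}$, where $\mathcal{L}$ is an explicit compound-Poisson-plus-Gaussian process whose jump times have absolutely continuous law, so that the statement follows by hand from the exponential (Campbell) formula, and then letting $\varepsilon\downarrow0$ by monotone convergence.
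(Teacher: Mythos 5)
Your proposal is correct, and it is worth noting that the paper itself offers no proof of this statement: Lemma \ref{shiftthm} is imported verbatim from \cite[Lemma 3.3]{GeissStein16}, so the only fair comparison is with that source. Your two pillars are exactly the ones that make the cited result work: first reduce, via the fact that $\gG_T$ is the $\PP_{\mathcal{L}}$-completion of $\bB(D[0,T])$, to a Borel set $B\supseteq\Lambda$ with $\PP_{\mathcal{L}}(B)=0$ (monotonicity of the shifted set then handles $\Lambda$ itself, as a subset of a product-measurable null set); then identify the path shift $\mathcal{L}\mapsto\mathcal{L}+x\one_{[s,T]}$ with the add-one-point map $\nN\mapsto\nN+\delta_{(s,x)}$ and apply the Mecke equation conditionally on the independent Brownian part, with Tonelli licensing all interchanges since every integrand is nonnegative. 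Your handling of the endgame is also right, and importantly so: for \emph{fixed} $(s,x)$ the law of $\mathcal{L}+x\one_{[s,T]}$ is in general mutually singular with that of $\mathcal{L}$ (a shifted path a.s.\ jumps at time $s$ while $\mathcal{L}$ a.s.\ does not), so no pointwise absolute-continuity argument can succeed and the integrated Mecke/Campbell identity is genuinely needed; likewise your observation that $\EE[\one_B(\mathcal{L})\,\nN([0,T]\times\R_0)]=0$ even when $\nN([0,T]\times\R_0)=\infty$ a.s.\ is the correct resolution of the only possible $0\cdot\infty$ worry. Two fine points deserve a remark. The identity $\Phi(W,\nN+\delta_{(s,x)})=\mathcal{L}+x\one_{[s,T]}$ is not literally a ``direct inspection'' of the L\'evy--It\^o decomposition, because the compensated small-jump integral is defined as an $L^2$-limit rather than pathwise; one needs a measurable, pathwise functional version of the L\'evy--It\^o map (e.g.\ defined as a limit along truncation levels $\varepsilon_n\downarrow 0$ where it exists), which is precisely the content of the canonical-space representation in \cite{Steinicke} underlying Lemma \ref{functionallem}, so you should cite it rather than gesture at the decomposition. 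Second, in your fallback truncation argument the phrasing is slightly off: you must keep the full process $\mathcal{L}$ fixed, restrict only the $(s,x)$-integration to $\{|x|>\varepsilon\}$, and condition on the independent Gaussian-plus-small-jump remainder so that the add-a-jump computation concerns only the compound Poisson component; replacing $\mathcal{L}$ itself by its truncation and ``letting $\varepsilon\downarrow 0$'' would not close, since $\one_B$ is not continuous and the laws need not converge on $B$. With these repairs your argument is a complete and essentially canonical proof of the cited lemma, matching the mechanism (add-one-point identity plus a Campbell--Mecke-type identity, alternatively finite-activity approximation) used in \cite{GeissStein16}.
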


\subsection{Malliavin differentiability  and bounds for the SDE}

\begin{prop}  \label{bounds-for-DX}   Let the  Assumption \ref{X-assumptions}  hold. Then it holds for the  solution $X$ to \eqref{forward} that $\, X_t \in \mathbb{D}_{1,2} \,$ for all $t \in [0,T].$ 
Moreover,
\begin{enumerate}[(1)]
 \item for $\nu$-a.a.  $x\in \R_0 $  and  for a.a. $s\leq t\leq T$ we have
\equa  D_{s,x} X_t &=&  \rho(s,x) + \int_s^t  \big(b(r,X^r+ D_{s,x} X^r ) - b(r,X^r )\big)dr,     
  \tion
   \item and for $x = 0$ and a.a.  $s \leq t \leq T$,
\equa  D_{s,0} X_t &=&  \sigma(s) + \int_s^t  D_{s,0} \,  b(r,X^r)  dr.   
 \tion
 Moreover, we have for  a.a. $s \leq t \leq T$
\begin{equation}
\label{upperbound-derivative-X}
    |D_{s,x} X_t|   \le  e^{L_b (t-s)} (\kappa_\rho(x) \one_{x \neq 0}  +   K_{\sigma} \one_{x = 0}).
\end{equation}
\end{enumerate}
\end{prop}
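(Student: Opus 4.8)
The plan is to establish Malliavin differentiability and the derivative formulas by passing to the limit in the Picard iteration scheme that defines the solution $X$ (as indicated in Remark \ref{rem:SDE-assumptions}), controlling the Malliavin derivatives uniformly along the iteration so that Lemma \ref{lemma123} applies. First I would set $X^{(0)}_t := x$ and $X^{(n+1)}_t := x + \int_0^t b(s,(X^{(n)})^s)\,ds + \int_0^t \sigma(s)\,dW_s + \int_{]0,t]\times\R_0}\rho(s,v)\,\tilde\nN(ds,dv)$, and show by induction that each $X^{(n)}_t \in \DD$. The base case is immediate, and the two stochastic integrals are linear in the data and have explicit, bounded Malliavin derivatives ($\sigma(s)$ in the $x=0$ direction and $\rho(s,x)$ in the jump direction); the drift term is handled by the chain rule, using that $b$ is Lipschitz in $|\cdot|_\infty$. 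The key identities one expects, for $x\neq 0$ and $s\le t$, are
\begin{align*}
D_{s,x} X^{(n+1)}_t &= \rho(s,x) + \int_s^t \big(b(r,(X^{(n)})^r + D_{s,x}(X^{(n)})^r) - b(r,(X^{(n)})^r)\big)\,dr,\\
D_{s,0} X^{(n+1)}_t &= \sigma(s) + \int_s^t D_{s,0}\,b(r,(X^{(n)})^r)\,dr,
\end{align*}
which come from the jump-perturbation formula \eqref{xieq} of Lemma \ref{functionallem} in the jump direction and from Theorem \ref{StefanXilin} (or the standard Wiener chain rule) in the $x=0$ direction.

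Next I would derive the uniform bound \eqref{upperbound-derivative-X} at the level of the iterates. Taking moduli in the identities above and using the Lipschitz property Assumption \ref{X-assumptions}\eqref{ass:sdeb}, one gets $|D_{s,x}X^{(n+1)}_t| \le \kappa_\rho(x)\one_{x\neq 0} + K_\sigma\one_{x=0} + L_b\int_s^t \sup_{r'\le r}|D_{s,x}X^{(n)}_{r'}|\,dr$. A Gronwall argument, carried out carefully so that the bound is uniform in the iteration index $n$, yields $\sup_n |D_{s,x}X^{(n)}_t| \le e^{L_b(t-s)}(\kappa_\rho(x)\one_{x\neq 0} + K_\sigma\one_{x=0})$. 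Because $\kappa_\rho \in L^2(\nu)$, this gives precisely the two uniform $\sup_N$ bounds required in the hypotheses of Lemma \ref{lemma123}: the integrability $\E\int_0^T |D_{t,0}X^{(n)}_u|^2\,dt < \infty$ and $\E\int_{[0,T]\times\R_0}|D_{t,x}X^{(n)}_u|^2\,\m(dt,dx)<\infty$, both uniformly in $n$.

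Since the Picard iterates converge to $X_t$ in $L^2$ (indeed in $\mathcal{S}^2$), Lemma \ref{lemma123} then gives $X_t \in \mathbb{D}_{1,2}^0 \cap \mathbb{D}_{1,2}^{\R_0} = \DD$, together with the bound \eqref{upperbound-derivative-X} inherited from the iterates. To obtain the fixed-point identities for $D_{s,x}X_t$ themselves, I would pass to the limit $n\to\infty$ in the two displayed recursions: the left-hand sides converge weakly in $L^2(\PP\otimes\m)$ to $D_{s,x}X_t$ (weak convergence of Malliavin derivatives follows from the $L^2$-convergence of the iterates plus the uniform derivative bound, a standard closedness argument), and the right-hand sides converge by dominated convergence, using the uniform bound and the Lipschitz continuity of $b$. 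For the jump direction this is cleanest via Corollary \ref{malliavin-derivative-explicite} and the representation $X_t = G(t,\mathcal{L})$ with $G$ being $\mathcal{G}_t$-measurable, which turns $D_{s,x}X_t$ into the genuine difference $G(t,\mathcal{L}+x\one_{[s,T]}) - G(t,\mathcal{L})$ and makes the drift identity a direct consequence of applying this shift inside \eqref{forward}.

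I expect the main obstacle to be the passage to the limit for the path-dependent drift term: because $b$ depends on the whole stopped path $(X^r)$ rather than on $X_r$ pointwise, one must control $D_{s,x}(X^{(n)})^r$ as an element of $D[0,T]$ uniformly in $r$ and justify exchanging the limit with the $dr$-integral and with the nonlinear evaluation of $b$. The uniform $\sup$-bound from the Gronwall step is what makes this dominated-convergence argument go through, and the measurability subtlety flagged in Remark \ref{rem:SDE-assumptions} — that the limit in $|\cdot|_\infty$ remains a limit in the coarser $\dJ$ topology, hence stays $\bB(D[0,T])$-measurable — is what guarantees that the shifted paths $X^r + D_{s,x}X^r$ lie in the domain on which $b$ is defined and Lipschitz.
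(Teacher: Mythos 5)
Your treatment of the jump direction is essentially the paper's: the shift formula of Lemma \ref{functionallem}/Corollary \ref{malliavin-derivative-explicite} turns $D_{s,x}X_t$ into the genuine difference $G(t,\mathcal{L}+x\one_{[s,t]})-G(t,\mathcal{L})$, Picard iteration plus Gronwall gives the $L^2(\PP\otimes\m)$ integrability, and the fixed-point identity and the bound $e^{L_b(t-s)}\kappa_\rho(x)$ follow. In the Gaussian direction, however, the paper takes a different and simpler route than yours: it never differentiates the iterates at all, but applies the coupling criterion (Theorem \ref{StefanXilin}) directly to the solution, estimating $\||X^t-X^{t,\varphi}|_\infty\|_{L^2}\le c\varphi$ by Burkholder--Davis--Gundy and Gronwall. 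This gives $X_t\in\mathbb{D}_{1,2}^0$ with no chain rule and no closedness argument; the pointwise bound $|D_{s,0}X_t|\le e^{L_bt}K_\sigma$ is then obtained separately (Lemma \ref{chain-rule-lemma}(1)) via the pathwise Cameron--Martin estimate $|\rho_{uh}X^t-X^t|_\infty\le uK_\sigma\|h\|_{L^1}e^{L_bt}$ combined with the identification of the directional derivative with $\langle D_{\cdot,0}X_t,h\rangle$.

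The genuine gap in your proposal is precisely at the point you dispose of with ``the drift term is handled by the chain rule'' and ``the standard Wiener chain rule.'' To run your induction in the Gaussian direction you need, for each $n$, that $b(r,(X^{(n)})^r)\in\mathbb{D}_{1,2}^0$ together with the \emph{pointwise} bound $|D_{s,0}\,b(r,(X^{(n)})^r)|\le L_b\,|D_{s,0}(X^{(n)})^r|_\infty$; otherwise neither the recursion for $D_{s,0}X^{(n+1)}_t$ nor the Gronwall step producing the uniform bound (and hence the hypothesis of Lemma \ref{lemma123}) is available. But $b(r,\cdot)$ is only Lipschitz with respect to $|\cdot|_\infty$ on the infinite-dimensional space $D[0,T]$: the standard chain rule requires $C^1$ coefficients, and Nualart's Lipschitz version (\cite[Proposition 1.2.4]{Nualart}) applies to functions of finitely many real variables, not to path functionals. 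Establishing exactly this pointwise estimate is the technical content of the paper's Lemma \ref{chain-rule-lemma}, whose proof occupies Appendix \ref{proof-of-chain-rule} and rests on Sugita's characterisation of $\mathbb{D}_{1,2}^0$ (ray absolute continuity plus stochastic G\^ateaux differentiability) and Janson's continuity of the Cameron--Martin shift. Your argument tacitly assumes this lemma; as written, the induction step and the $x=0$ half of \eqref{upperbound-derivative-X} are not justified. The remainder of your architecture (closedness via Lemma \ref{lemma123}, preservation of pointwise bounds under weak $L^2$ limits, dominated convergence in the path-dependent drift) is sound once that ingredient is supplied.
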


\begin{proof}
$\boxed{ \, X_t \in   \mathbb{D}_{1,2}^0}:$     \\
By  Theorem \ref{StefanXilin} we need to estimate  $\|X_t -X^\varphi_t\|_{L^2},$
where 
\begin{align*} 
X^\varphi_t &= \,  x + \int_0^t b(s,X^{s,\varphi}) ds +  \int_0^t \sigma(s) d(\sqrt{1-\varphi^2} W_s + \varphi W'_s)  \\
& \quad + \int_{{]0,t]}\times{\R_0}}  \rho(s,v)  \tilde \nN(ds,dv).
  \end{align*}
For later use we compute here $\| |X^t -X^{t,\varphi}|_\infty \|_{L^p}$ for $p\ge 2.$ By the Burkholder-Davis-Gundy inequality,
\begin{align*}   
\||X^t -X^{t,\varphi} |_\infty\|_{L^p} &\le  \int_0^t \|b(s,X^s)  -   b(s,X^{s,\varphi})\|_{L^p} ds  \notag\\
& \quad + C_p (1- \sqrt{1-\varphi^2} +\varphi ) \left ( \int_0^t \sigma(s)^2 ds \right)^\frac{1}{2}.
  \end{align*} 
Using  $\|b(s,X^s)  -   b(s,X^{s,\varphi})\|_{L^p}\le L_b  \||X^s  -   X^{s, \varphi}|_\infty\|_{L^p}$, Gronwall's inequality and $1- \sqrt{1-\varphi^2} \le \varphi$
imply 
\begin{align} \label{the-p-estimate}
\||X^t -X^{t,\varphi} |_\infty\|_{L^p} &\le c\varphi
 \end{align}
which especially means $\, X_t \in   \mathbb{D}_{1,2}^0$  if $p=2.$ \smallskip
 $\boxed{\, X_t \in \mathbb{D}_{1,2}^{\R_0}:}$  \smallskip\\
 If $X$ is a strong solution  \cite[Corollary 5.41]{Menaldi} gives us the representation 
     $X_t=G(t,\mathcal{L}^{ t })$ for all $t \in [0,T],$   a.s.~where $G(t, \cdot) :D{[0,T]} \to \R $ is  $\mathcal{G}_t$ measurable.
 Then by \eqref{xieq}  we need 
 $ (\omega,r,v) \mapsto ( G(t, \mathcal{L}^t + v\one_{[r,t]} )- G(t,  \mathcal{L}^t ) )  \in L^2(\PP \otimes\m).$ 
   Abbreviating $ \mathcal{X}^{r,v}_t := G(t,  \mathcal{L}^t + v\one_{[r,t]} )$    we conclude  from \eqref{forward} that
\begin{align*}
 \mathcal{X}^{r,v}_t   &= x+    \int_0^t b\left (s,(\mathcal{X}^{r,v})^s \right )  ds   +  \int_0^t \sigma(s) dW_s \\
& \quad + \rho(r,v)\one_{{]0,t]}\times{\R_0}}(r,v)+ \int_{{]0,t]}\times{\R_0}}  \rho(s,z)  \tilde \nN(ds,dz) ,
\end{align*}   
where we used \eqref{xieq}  again and the fact  that $D_{r,v}  \int_{{]0,t]}\times{\R_0}}  \rho(s,z)  \tilde \nN(ds,dz) =  \rho(r,v)$ since the integral belongs to the first chaos.  Approximating $ \mathcal{X}^{r,v}_t$  as well as  $X_t$   via  Picard iteration starting with  $\mathcal{X}^{r,v,0}_t := 0$ and $X^0_t=0$ one derives then 
 \begin{align*}
 \mathcal{X}^{r,v,n+1}_t   - X_t^{n+1}   =&    \int_0^t \Big( b\left (s,(\mathcal{X}^{r,v,n})^s \right )  -  b \left (s,(X^{n})^s\right ) \Big)   ds \\
 &+ \rho(r,v)\one_{{]0,t]}\times{\R_0}}(r,v),
\end{align*}    
and by the Burkholder-Davis-Gundy inequality,
\begin{align*}
& \| (\omega,r,v) \mapsto  |(\mathcal{X}^{r,v,n+1})^t   - (X^{n+1})^t|_\infty ) \|_{ L^2(\PP \otimes\m)} \\
& \le    \int_0^t \| (\omega,r,v) \mapsto ( b\left (s,(\mathcal{X}^{r,v,n})^s \right )  -  b \left (s,(X^{n})^s\right )  )  \|_{ L^2(\PP \otimes\m)}  ds +  C_2 \kappa_{\rho,2} \\
&\le L_b \int_0^t \| (\omega,r,v) \mapsto   |(\mathcal{X}^{r,v,n})^s -  (X^{n})^s |_\infty \|_{ L^2(\PP \otimes\m)}  ds +   C_2 \kappa_{\rho,2}.
\end{align*}    
Since    $\| (\omega,r,v) \mapsto  |(\mathcal{X}^{r,v,1})^t   - (X^{1})^t|_\infty ) \|_{ L^2(\PP \otimes\m)}   \le  C_p\, \kappa_{\rho,2}<\infty$ one can derive by Gronwall's inequality from this relation that  $$\| (\omega,r,v) \mapsto  (G(t, \mathcal{L}^t + v\one_{[r,t]} )   - G(t, \mathcal{L}^t )) \|_{ L^2(\PP \otimes\m)}< \infty.$$ \smallskip
For $x\neq 0$ we get  \eqref{upperbound-derivative-X} by Assumption \ref{X-assumptions} 
\begin{align*} 
|D_{s,x} X_t|  & \le   |D_{s,x} X^t|_\infty \le   \kappa_\rho(x)  + \int_s^t L_b  |D_{s,x} X^r |_\infty dr,     
\end{align*} 
from which the assertion follows by Gronwall's inequality.
\end{proof}


\subsection{The Malliavin derivative of a functional}

We prove a representation for  the Malliavin derivative $D_{t,0}$ of the generator of the BSDE. In our setting the generator is a  function depending pathwise on  the process $X$,  and on random variables. The proof  uses a characterisation of $ \mathbb{D}_{1,2}^0$ done by Sugita \cite{Sugita}, it is postponed to the Appendix \ref{proof-of-chain-rule}.

\begin{lem}  \label{chain-rule-lemma} Suppose Assumptions  \ref{X-assumptions} and let  $X$ be the solution to \eqref{forward}. 
\begin{enumerate}[(1)]
\item  \label{X-estimate}   For a.e. $s \in [0,t]$ it holds
$    |D_{s,0} X_t|   \le  e^{L_b t}    K_{\sigma}. $
\item  \label{bold-f-estimate} Assume that 
$\f: [0,T]\times D([0,T]) \times \R^d \to \R $ is measurable 
satisfying 
\begin{align*} 
 |\f(t,0,0)|& \le   k_f(t),\\
 |\f(t,\bx, y)-f(t,\bx', y) |   
 &\le L_\bx \,\,  (c+ \frac{\beta}{2}(|\bx|^ {\tt r}_\infty + |\bx'|^ {\tt r}_\infty))   |\bx-\bx'|_\infty, \\
 |\f(t,\bx, y)- \f(t,\bx, y') | 
&\le  \sum_{k=1}^d  L_{y_k} \,
\, |y_k- y_k'|,
\end{align*} 

for some $L_\bx, L_{y_1},...,L_{y_d} \ge 0$ and     $k_f   \in L^1([0,T]).$  If $ Y_1, ...,Y_d \in \mathbb{D}_{1,2}^{0},$  
then 
there exist  measurable    $G_1, ... ,G_d : \Omega \times [0,T] \to \R$ which are bounded: $|G_k| \le  L_{y_k},$  and  such that  it holds a.s. and for a.a. $s\in [0,T]$
\begin{align}  \label{chain-rule}
  D_{s,0}  \f  \left( t, X^t, (Y_1, ..., Y_d)   \right)   &=  ( D_{s,0}  \f ( t, X^t,y) ) |_{y=(Y_1, ... ,Y_d )} \notag\\
  &\quad  +  G_1(t) \, D_{s,0} Y_1  +... + G_d(t) \, D_{s,0} Y_d  
\end{align}
and
\begin{align} \label{Df-estimate}  
\|  D_{\cdot,0}  \f ( t, X^t,y) ) \|_{L^\infty[0,T]}  \le   L_\bx  K_\sigma  
\big ( c+  \beta |X^t|^ {\tt r}_\infty \big) e^{L_b T}.
\end{align}
for all  $y \in \R^d$.
\end{enumerate}

 \end{lem}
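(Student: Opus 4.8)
The first statement is a direct specialisation of Proposition \ref{bounds-for-DX}: taking $x=0$ in the pointwise bound \eqref{upperbound-derivative-X} and using $e^{L_b(t-s)}\le e^{L_b t}$ for $s\in[0,t]$ yields $|D_{s,0}X_t|\le e^{L_b t}K_\sigma$ at once. In fact the same bound applied at every time $r\le t$ gives $|D_{s,0}X^t|_\infty=\sup_{r\le t}|D_{s,0}X_r|\le e^{L_b T}K_\sigma$, which is what is needed for the estimate in part (2).

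For the chain rule (2), the plan is to use Sugita's characterisation of $\mathbb{D}_{1,2}^0$ (\cite{Sugita}), which identifies membership with ray absolute continuity along Cameron--Martin directions of the Brownian part together with an $L^2$-bound on the resulting directional derivative, the derivative process being recovered by pairing against those directions. I would proceed in two layers. First, fix $y\in\R^d$ and regard $\f(t,\cdot,y)$ as a Lipschitz functional on $(D[0,T],|\cdot|_\infty)$ composed with the $\mathbb{D}_{1,2}^0$-process $X^t$ (Proposition \ref{bounds-for-DX}); differentiating along Cameron--Martin shifts shows $\f(t,X^t,y)\in\mathbb{D}_{1,2}^0$, with the derivative-functional of $\f(t,\cdot,y)$ at $\bx=X^t$ having total variation bounded by the local Lipschitz constant $L_\bx\bigl(c+\beta|X^t|^{\tt r}_\infty\bigr)$ and being paired against the path $(D_{s,0}X_{t\wedge r})_r$. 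This produces a frozen-$y$ derivative $D_{s,0}\f(t,X^t,y)$ that is jointly measurable in $(\omega,s,y)$ and satisfies $|D_{s,0}\f(t,X^t,y)|\le L_\bx\bigl(c+\beta|X^t|^{\tt r}_\infty\bigr)\,|D_{s,0}X^t|_\infty$. Combining this with the bound $|D_{s,0}X^t|_\infty\le e^{L_b T}K_\sigma$ from part (1) gives exactly the estimate \eqref{Df-estimate}, uniformly in $y$.

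To obtain the full representation \eqref{chain-rule} I would add the dependence on $Y_1,\dots,Y_d$ by mollifying $\f$ in its Euclidean variables, producing smooth approximants $\f_n$ that retain the Lipschitz constants $L_\bx,L_{y_k}$ and the growth in $\bx$. For smooth $\f_n$ the classical chain rule gives $D_{s,0}\f_n(t,X^t,Y)=(D_{s,0}\f_n(t,X^t,y))|_{y=Y}+\sum_{k=1}^d\partial_{y_k}\f_n(t,X^t,Y)\,D_{s,0}Y_k$ with $|\partial_{y_k}\f_n|\le L_{y_k}$. Since $Y_k\in\mathbb{D}_{1,2}^0$ and the frozen-$y$ derivatives are uniformly bounded by the first layer, the right-hand sides are bounded in $L^2(\PP\otimes\lambda)$ uniformly in $n$; passing to the limit (using Lemma \ref{lemma123} to keep the limit in $\mathbb{D}_{1,2}^0$, and extracting weak limits of the bounded factors $\partial_{y_k}\f_n(t,X^t,Y)$ to define the coefficients $G_k(t)$) yields \eqref{chain-rule} with $|G_k|\le L_{y_k}$.

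The main obstacle is the rigorous treatment of the non-smoothness of $\f$ together with the path dependence. One must justify the Cameron--Martin directional differentiation of the Skorokhod-space functional $\f(t,\cdot,y)$ evaluated at $X^t$ (which is only Malliavin-differentiable, not pathwise differentiable in $\omega$), verify that the frozen-$y$ derivative is measurable in $y$ so that the substitution $y=Y$ is legitimate, and produce the bounded selections $G_k$ as limits without assuming that $\f$ is differentiable. The uniform $L^2$-bounds coming from the Lipschitz constants, together with the closedness supplied by Lemma \ref{lemma123} and the pointwise $X$-bounds of part (1), are what make these limiting and selection steps go through.
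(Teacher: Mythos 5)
Your overall strategy for part (2) --- Sugita's characterisation of $\mathbb{D}_{1,2}^0$ plus Cameron--Martin shifts for the frozen-$y$ path functional, then a separate treatment of the Euclidean variables --- is the same in spirit as the paper's, and your derivation of \eqref{Df-estimate} from a pathwise Gronwall bound on the shifted paths is exactly what the paper does. There are, however, two concrete gaps.

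First, part (1) is not a direct specialisation of Proposition \ref{bounds-for-DX}: the proof of that proposition establishes \eqref{upperbound-derivative-X} only for $x\neq 0$, where the pathwise identity $D_{s,x}X_t=\rho(s,x)+\int_s^t\big(b(r,X^r+D_{s,x}X^r)-b(r,X^r)\big)dr$ lets the Lipschitz bound on $b$ act pathwise. At $x=0$ the analogous route would require $|D_{s,0}\,b(r,X^r)|\le L_b\,|D_{s,0}X^r|_\infty$, i.e.\ a chain-rule estimate of precisely the type Lemma \ref{chain-rule-lemma} is meant to supply, so citing \eqref{upperbound-derivative-X} at $x=0$ begs the question. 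The paper instead proves (1) directly: Gronwall applied to $\rho_{uh}X_t-X_t$ gives $|\rho_{uh}X^t-X^t|_\infty\le u\,K_\sigma\|h\|_{L^1[0,T]}e^{L_bt}$, and \cite[Theorem 3.11]{GeissStein16} identifies $\lim_{u\downarrow0}u^{-1}(\rho_{uh}X_t-X_t)$ with $\langle D_{\cdot,0}X_t,h\rangle_{L^2([0,T])}$; taking the supremum over $\|h\|_{L^1}\le1$ yields the pointwise bound.

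Second, the mollification step. Even for $\f_n$ smooth in $y$, the composite $\f_n(t,X^t,Y)$ is not a smooth function of finitely many $\mathbb{D}_{1,2}$-variables, because the $\bx$-argument is an infinite-dimensional, merely Lipschitz dependence; ``the classical chain rule'' does not apply, and one still has to split the Cameron--Martin difference quotient into the $\bx$-increment and the successive $y$-increments and pass to the limit in each term --- which is exactly the telescoping-sum argument the paper carries out, using the joint continuity of $(\eta,\xi)\mapsto\rho_\eta\xi$ from \cite[Theorem 14.1 (ix)]{Janson} to handle the frozen-variable substitutions. Moreover, after extracting weak-$*$ limits $G_k$ of $\partial_{y_k}\f_n(t,X^t,Y)$ you must still identify the weak limit of the frozen-$y$ terms $(D_{s,0}\f_n(t,X^t,y))|_{y=Y}$ with $(D_{s,0}\f(t,X^t,y))|_{y=Y}$; weak convergence of $D_{s,0}\f_n(t,X^t,y)$ for each fixed $y$ does not survive the substitution $y=Y(\omega)$, and your sketch does not close this loop. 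The paper sidesteps both issues by computing the directional derivatives of all pieces directly via the shift map and invoking \cite[Proposition 1.2.4]{Nualart} only for the existence of the bounded selections $G_k$. If you wish to keep the mollification route, the mollified objects have to be inserted and the limits taken \emph{inside} the telescoping identity at the level of difference quotients, not at the level of the (not yet defined) derivatives.
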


\section{Malliavin differentiable, bounded solutions of the  BSDE under Lip\-schitz conditions}\label{sec: bounds}

\subsection{Truncation and Malliavin differentiability}

\rm We show now that the truncated version of the BSDE \eqref{fbsde}  is Malliavin differentiable.

For $M >0$ let  { $b_M :\R \to [-M,M]$} be a smooth monotone function such that  $0\le b'_M(x) \le 1$ and
\equa
b_M(x) := \left \{ \begin{array}{cl} M, &\,\, x>M+1,\\
x,  & \,\, |x| \le  M -1, \\
-M, &\,\, x< -M-1.
\end{array} \right .
\tion
Note that $|b_M(x)|\le |x|\wedge M$. { By a slight abuse of notation, we also define, for all $\bx \in D[0,T]$, $b_M(\bx)$ as the function of $D[0,T]$ given by $t \mapsto b_M(\bx_t)$.} \\
We set
\begin{align} \label{data-cut-off}
  g^M(\bx)&:= g(b_M(\bx))  \notag \\
    f^M(s,  \bx ,y, z, u )&:=  f\Big( s, b_M(\bx) ,y, b_M(z), b_M(u) \Big).
     \end{align} 
     
Theorem \ref{StefanXilin} implies  Malliavin differentiability of the truncated terminal condition:
\begin{cor}\label{cor1} Let the  Assumptions \ref{X-assumptions} and \ref{Ypath-assumptions}   hold. Then it holds for $\varphi \in [0,1]$ that
$$\|g^M(X)- g^M(X^\varphi) \|_{L^2}  \le c L_{g^M}  \, \varphi,$$
where  $L_{g^M} = (c +  \alpha |M|^{\tt r}).$
\end{cor}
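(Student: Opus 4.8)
The plan is to apply Theorem \ref{StefanXilin} directly to the random variable $\xi = g^M(X)$, which reduces the claim to estimating $\|g^M(X) - g^M(X^\varphi)\|_{L^2}$ from above by a constant multiple of $\varphi$. Here $X^\varphi$ denotes the coupled process built from the coupled Brownian motion $W^\varphi$ as in the proof of Proposition \ref{bounds-for-DX}. First I would unwind the definition $g^M(\bx) = g(b_M(\bx))$ and use the Lipschitz-type estimate \ref{Ypath-assumptions}\eqref{the-g} for $g$ to bound the increment $|g^M(X) - g^M(X^\varphi)|$ pointwise.

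The key observation is that the truncation $b_M$ caps the $\sup$-norm: since $|b_M(x)| \le |x| \wedge M$, we have $|b_M(X)|_\infty \le M$ and $|b_M(X^\varphi)|_\infty \le M$. Feeding these bounds into the local Lipschitz estimate for $g$ gives
\begin{align*}
|g^M(X) - g^M(X^\varphi)| &= |g(b_M(X)) - g(b_M(X^\varphi))| \\
&\le \Big(c + \tfrac{\alpha}{2}\big(|b_M(X)|_\infty^{\tt r} + |b_M(X^\varphi)|_\infty^{\tt r}\big)\Big)\,|b_M(X) - b_M(X^\varphi)|_\infty \\
&\le (c + \alpha M^{\tt r})\,|b_M(X) - b_M(X^\varphi)|_\infty.
\end{align*}
Thus the local Lipschitz constant gets replaced by the genuine global constant $L_{g^M} = c + \alpha M^{\tt r}$, which is exactly the point of truncating: boundedness of the argument turns a local Lipschitz condition into a global one. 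Since $b_M$ is itself $1$-Lipschitz (because $0 \le b_M' \le 1$), we further have $|b_M(X) - b_M(X^\varphi)|_\infty \le |X - X^\varphi|_\infty$, so pointwise $|g^M(X) - g^M(X^\varphi)| \le L_{g^M}\,|X^t - X^{t,\varphi}|_\infty$ with $t = T$.

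Taking $L^2$-norms and invoking the estimate \eqref{the-p-estimate} from the proof of Proposition \ref{bounds-for-DX} with $p = 2$, namely $\||X^T - X^{T,\varphi}|_\infty\|_{L^2} \le c\varphi$, yields $\|g^M(X) - g^M(X^\varphi)\|_{L^2} \le c\,L_{g^M}\,\varphi$, which is the claimed bound. By Theorem \ref{StefanXilin} the finiteness of $\sup_{0 < \varphi \le 1}\|g^M(X) - g^M(X^\varphi)\|_{L^2}/\varphi$ then certifies $g^M(X) \in \mathbb{D}_{1,2}^0$. I do not anticipate a serious obstacle here; the only points requiring a little care are confirming that the coupling commutes with the truncation in the sense that $g^M(X)^\varphi = g^M(X^\varphi)$ (which follows from the functional representation $\xi^\varphi = \Xi(\varsigma W^\varphi + \mathcal{L} - \varsigma W)$ recalled before Theorem \ref{StefanXilin}, applied to $\Xi = g^M \circ (\text{solution map})$), and ensuring the constant $c$ from \eqref{the-p-estimate} is uniform in $\varphi \in [0,1]$, which it is since it arises from Gronwall's inequality with $\varphi$-independent data.
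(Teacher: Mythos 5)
Your proof is correct and follows essentially the same route as the paper: the truncation $b_M$ turns the local Lipschitz condition on $g$ into a global one with constant $L_{g^M}=c+\alpha M^{\tt r}$, and the bound $\||X-X^\varphi|_\infty\|_{L^2}\le c\varphi$ from the proof of Proposition \ref{bounds-for-DX} finishes the estimate. Your extra remarks (the $1$-Lipschitz property of $b_M$, the compatibility $g^M(X)^\varphi=g^M(X^\varphi)$, and the uniformity of the Gronwall constant in $\varphi$) are details the paper leaves implicit but do not change the argument.
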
 
\begin{proof}
By  the Burkholder-Davis-Gundy  inequality we get similarly to the proof of Proposition \ref{bounds-for-DX}   that $\| |X-X^\varphi |_\infty\|_{L^2} \le  c\varphi.$
Then by Assumption \ref{Ypath-assumptions} \eqref{the-g} , 
\begin{align*}
 \|g^M(X)- g^M(X^\varphi) \|_{L^2}  &\le (c +  \alpha |M|^{\tt r})\| |b_M(X)-b_M(X^\varphi) |_\infty\|_{L^2}
    \le c L_{g^M} \, \varphi.
\end{align*} 
\end{proof}

\begin{prop} \label{Malliavin-diff-trunc-BSDE}   Let the  Assumptions \ref{X-assumptions} and \ref{Ypath-assumptions} hold. There  exists a unique  solution $(Y^M,Z^M,U^M)  \in \mathcal{S}^2 \times L^2 (W)\times L^2(\tilde \nN)$  to 
\begin{align} \label{BSDE-truncated}
 Y^M_t= \,&g^M(X)+\int_t^T  f^M \left( s, X^s,\Theta^M_s \right)ds  \notag \\
& -     \int_t^T Z^M_{s}   dW_s -\int_{{]t,T]}\times{\R_0}}U^M_{s}(v) \tilde \nN(ds,dv)
\end{align}
where $\Theta^M_s= (Y^M_s, Z^M_s,  H^M_s) , $     and 
\begin{align} \label{the-H}
 H^M_s =  \int_{\R_0}  h(s, b_M ( U^M_s(v))) \kappa (v)  \nu(dv)
 \end{align}
 Moreover, the solution processes  $(Y^M,Z^M, U^M)$  are Malliavin differentiable, i.e.
$$
Y^M, Z^M   \in L^2([0,T];\DD), \quad U^M\in L^2([0,T]\times\R_0;\DD),
$$
and  for $t \le u\le T$ we have that
\begin{align} \label{Diff-BSDE}
D_{t,x} Y^M_u= \,& D_{t,x}g^M(X)+\int_u^TD_{t,x}  f^M \left( s, X^s, \Theta^M_s  \right)ds  \notag \\
& -     \int_u^TD_{t,x} Z^M_{s}   dW_s -\int_{{]u,T]}\times{\R_0}}  D_{t,x} U^M_{s}(v) \tilde \nN(ds,dv).
\end{align}
\end{prop}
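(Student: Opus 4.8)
The plan is to use that the truncation in \eqref{data-cut-off} turns \eqref{BSDE-truncated} into a BSDE with bounded terminal value and globally Lipschitz generator, so that solvability follows from classical Lipschitz theory and differentiability from the closedness criterion of Lemma \ref{lemma123}. First I would record that, since $|b_M|\le M$ and $b_M$ is $1$-Lipschitz, the terminal value $g^M(X)=g(b_M(X))$ is bounded by Assumption \ref{Ypath-assumptions}\eqref{the-g}, while the local constants $\tfrac{\gamma}{2}(|z|^\ell+|z'|^\ell)$, $\tfrac{\gamma}{2}(|u|^{m_1}+|u'|^{m_1})$ and $\tfrac{\gamma}{2}(|u|^{m_2}+|u'|^{m_2})$ are, after composition with $b_M$, bounded by $\gamma M^{\ell}$, $\gamma M^{m_1}$ and $\gamma M^{m_2}$; hence $f^M$ is Lipschitz in $(y,z,u)$. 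Moreover $U\mapsto\int_{\R_0}h(s,b_M(U(v)))\kappa(v)\nu(dv)$ is Lipschitz from $L^2(\nu)$ to $\R$ by Cauchy--Schwarz and $\kappa\in L^2(\nu)$, so the full generator is Lipschitz in $(Y,Z,U)\in\R\times\R\times L^2(\nu)$ with $|f^M(s,0,0,0,0)|\le k_f(s)\in L^1$. Existence and uniqueness of $(Y^M,Z^M,U^M)\in\mathcal{S}^2\times L^2(W)\times L^2(\tilde\nN)$ then follow from standard results for Lipschitz BSDEs driven by $W$ and $\tilde\nN$ (e.g.\ \cite{DelongImkeller10,FujiTaka,GeissStein16}).

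\textbf{Differentiability of the Picard iterates.} I would realise the solution as the $\mathcal{S}^2\times L^2(W)\times L^2(\tilde\nN)$-limit of the Picard iterates $(Y^{M,n},Z^{M,n},U^{M,n})$ built from $\Theta^{M,0}=0$ by solving, at each step, the Lipschitz BSDE with frozen generator $f^M(s,X^s,\Theta^{M,n}_s)$, and show inductively that each iterate lies in $\DD$. The terminal value satisfies $g^M(X)\in\mathbb{D}_{1,2}^0$ by Corollary \ref{cor1} and Theorem \ref{StefanXilin}, and $g^M(X)\in\mathbb{D}_{1,2}^{\R_0}$ by Corollary \ref{malliavin-derivative-explicite} together with $X_T\in\mathbb{D}_{1,2}^{\R_0}$ (Proposition \ref{bounds-for-DX}) and the Lipschitz property of $g^M$. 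For the inductive step, the frozen generator is Malliavin differentiable: in the $0$-direction I would invoke the chain rule Lemma \ref{chain-rule-lemma} with $\f=f^M$ and $y=(Y^{M,n}_s,Z^{M,n}_s,H^{M,n}_s)$, checking $H^{M,n}_s\in\mathbb{D}_{1,2}^0$ by differentiating under the $\nu(dv)$-integral (using boundedness of $h_\ru$ and $b'_M$); in the $\R_0$-direction I would use that $D_{t,x}$ acts as the difference $\Phi\mapsto\Phi(\mathcal{L}+x\one_{[t,T]})-\Phi(\mathcal{L})$ of \eqref{xieq}, applied to the composed functional once the iterates are written as functionals of $\mathcal{L}$. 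Preservation of $\DD$ under conditional expectation and the ensuing martingale representation (a standard feature of the chaos expansion, as in \cite{GeissStein16}) then puts $(Y^{M,n+1},Z^{M,n+1},U^{M,n+1})$ in $\DD$, with derivatives solving the linear BSDE obtained by differentiating the $n$-th Picard step.

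\textbf{Uniform bounds, closure and limit.} The decisive step is a bound on the Malliavin derivatives uniform in $n$. Because of the truncation the coefficients of these linear derivative-BSDEs --- the partial ``derivatives'' of $f^M$ and of $h\circ b_M$ --- are bounded by $M$-dependent constants, and the terminal derivative $D_{t,x}g^M(X)$ is bounded independently of $n$ (in the jump direction via Lipschitzness of $g^M$ and $|D_{s,x}X_t|\le\kappa_\rho(x)e^{L_bT}$ from \eqref{upperbound-derivative-X}). A weighted-norm a priori estimate, which simultaneously yields convergence of the differentiated iterates, then gives
\[
\sup_n\int_0^T\Big(\EE\int_0^T|D_{t,0}Y^{M,n}_u|^2\,dt+\EE\int_{[0,T]\times\R_0}|D_{t,x}Y^{M,n}_u|^2\,\m(dt,dx)\Big)du<\infty
\]
and the analogues for $Z^{M,n},U^{M,n}$. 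Together with the $L^2$-convergence of the iterates, Lemma \ref{lemma123} (applied in both the $0$- and $\R_0$-directions) yields $Y^M,Z^M\in L^2([0,T];\DD)$ and $U^M\in L^2([0,T]\times\R_0;\DD)$, and passing to the limit $n\to\infty$ in the differentiated equations identifies the derivatives as the solution of \eqref{Diff-BSDE}.

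\textbf{Main obstacle.} The hard part is the $\R_0$-direction. There the derivative is the difference operator, and the generator depends on $U$ only through the nonlinear functional $H^M_s=\int_{\R_0}h(s,b_M(U^M_s(v)))\kappa(v)\nu(dv)$, so the linearised jump coefficient is an increment of $f$ in its last slot composed with an increment of $h$. To solve and estimate the resulting linear jump-BSDE --- and to run the comparison argument behind the uniform bounds --- the associated Dol\'eans--Dade exponential must remain non-negative, which is exactly what the sign condition \eqref{fuhugeminus1}, $f_\ru h_\ru\ge-1$, together with $\kappa=1\wedge|\cdot|$, guarantees. The second delicate point is the doubly-indexed recursion: the derivative of iterate $n+1$ is driven by the derivatives of iterate $n$, so the uniform-in-$n$ bound has to be extracted from a weighted-norm contraction rather than a single a priori estimate.
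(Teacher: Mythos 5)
Your proposal is correct in substance but takes a genuinely different route from the paper for the Malliavin differentiability, and in fact it is precisely the route the authors chose to avoid. The paper handles the two directions separately: for $\mathbb{D}_{1,2}^{\R_0}$ it simply cites \cite[Theorem A1]{GeissStein20} (Lipschitz case, bounded terminal condition), and for $\mathbb{D}_{1,2}^0$ it does \emph{not} run a Picard induction at all; instead it writes down the coupled BSDE driven by $W^\varphi=\sqrt{1-\varphi^2}\,W+\varphi W'$, proves a single a priori stability estimate $\|\Delta Y\|+\|\Delta Z\|+\|\Delta U\|\le C\varphi$ (estimate \eqref{apriori-levy-Ito}), and concludes membership in $\mathbb{D}_{1,2}^0$ from the coupling characterisation of Theorem \ref{StefanXilin}, after which \eqref{Diff-BSDE} follows by commuting $D_{t,x}$ with the integrals via \cite{DelongImkeller10}. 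The point of this detour is explicitly stated in the proof: the classical iterate-and-close argument in the Gaussian direction, as implemented in \cite{GeissStein20}, needs an additional regularity condition on the generator (condition $(A_f)$ e) there) which the authors do not wish to assume. Your version partially circumvents that by invoking the Lipschitz chain rule of Lemma \ref{chain-rule-lemma} at each Picard step rather than classical differentiability, and since Lemma \ref{lemma123} only needs the uniform derivative bound together with $L^2$-convergence of the iterates themselves (not convergence of their derivatives), your closure step does deliver $\DD$-membership; but you should then obtain \eqref{Diff-BSDE} by commutation, as the paper does, rather than by ``passing to the limit in the differentiated equations'', since identifying the limits of the linearised coefficients $G_k^{(n)}$ for a merely Lipschitz generator is exactly the delicate point. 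Two smaller remarks: the sign condition \eqref{fuhugeminus1} is not needed for the $L^2$ a priori bounds on the derivative BSDEs (it is used later, for the comparison argument in Proposition \ref{first-estimates-depending-on-M-pi-eps}), so your ``main obstacle'' paragraph overstates its role here; and the trade-off between the two approaches is that the coupling argument is markedly shorter and never differentiates the generator, while your induction is self-contained but doubly indexed and heavier.
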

\begin{proof}  The existence and uniqueness of the solution is clear.
Since we are in the Lipschitz case with bounded terminal condition,
 \equa
Y^M, Z^M   \in L^2([0,T]; \mathbb{D}_{1,2}^{\R_0}), \quad U^M\in L^2([0,T]\times\R_0; \mathbb{D}_{1,2}^{\R_0})
\tion
is shown in  \cite[Theorem A1]{GeissStein20}. However, for  $\mathbb{D}_{1,2}^0$ the additional regularity condition  \cite[condition $(A_f)$ e)]{GeissStein20}  is used  which we do not  want to  assume here.
Instead we will apply repeatedly Theorem  \ref{StefanXilin}.  We consider
\begin{align*} 
 Y^{M,\varphi}_t= \,&g^M((X)^\varphi)+\int_t^T  f^M \left( s, (X^s)^\varphi,  \Theta^{M,\varphi}_s  \right)ds  \notag \\
& -     \int_t^T Z^{M,\varphi}_{s}   d (\sqrt{1-\varphi^2} W_s + \varphi W'_s)  -\int_{{]t,T]}\times{\R_0}}U^{M,\varphi}_{s}(v) \tilde \nN(ds,dv).
\end{align*}

We introduce the notation   $( \Delta \xi, \Delta Y_t, \Delta Z_t,  \Delta U_t ):=( g^M(X)-g^M((X)^\varphi),  Y^M_t -  Y^{M,\varphi}_t,  Z^M_t -  Z^{M,\varphi}_t,U^M_t -  U^{M,\varphi}_t).$
From \cite[Theorem 6.3.]{GeissYlinen} and \cite[Proposition 2.2]{bbp} one concludes   an a priori  estimate:  there is a  $C>0$   such that for all $t\in [0,T]$,
  \begin{align}  \label{apriori-levy-Ito}
  &\EE\sup_{s\in[t,T]}|\Delta Y_s|^2+  (1-\sqrt{1-\varphi^2})  \EE\int_t^T|Z_s^{ M}|^2ds   + \EE\int_t^T|\Delta Z_s|^2ds  \notag \\
  & \quad+\EE\int_t^T \|\Delta U_s\|_{L^2(\nu)}^2ds \notag\\
  &\leq C\EE\bigg[|\Delta \xi|^2+\bigg(\int_t^T |f^M \left( s, X^s,  \Theta^M_s \right)-  f^M \left( s, (X^s)^\varphi,\Theta^M_s  \right)|ds\bigg)^{2}\bigg].\nonumber  \\
  &\leq C( L_{g^M}^2  +  (T-t)^2  L_{ f^M}^2) \,\, c^2\, \varphi^2
  \end{align}
  with  $\Theta^M_s:= (Y^M_s, Z^M_s,  H^M_s ).$
From this we immediately derive that  
$$ Y^M_t  , \,\,\, \int_t^T Z^M_{s}   dW_s , \,\,\, \int_{{]t,T]}\times{\R_0}}U^M_{s}(v) \tilde \nN(ds,dv) \in \mathbb{D}_{1,2}^0.$$
We  also have $ Z^M   \in L_2([0,T]; \mathbb{D}_{1,2}^{0})$ since 
$$    \int_t^T  \|Z^M_s -  Z^{M,\varphi}_s\|^2_{L^2} ds  =   \int_t^T \EE |\Delta Z_s|^2ds $$
so that we again can use  \eqref{apriori-levy-Ito}   and Theorem \ref{StefanXilin}. By the same argument we also have
$  U^M\in L^2([0,T]\times\R_0;\mathbb{D}_{1,2}^{0})$.
From the last inequality in \eqref{apriori-levy-Ito} one also  concludes that $ \int_t^T f^M \left( s, X^s,  \Theta^M_s \right)ds \in \mathbb{D}_{1,2}^0.$   
By   \cite[Lemma 3.2, Lemma 3.3]{DelongImkeller10}  we can interchange $D_{t,x}$ with  integrals so that
\begin{align*} 
D_{t,x} Y^M_u= \,& D_{t,x}g^M(X)+\int_u^TD_{t,x}  f^M \left( s, X^s,  \Theta^M_s  \right)ds  \notag \\
& -     \int_u^TD_{t,x} Z^M_{s}   dW_s -\int_{{]u,T]}\times{\R_0}}  D_{t,x} U^M_{s}(v) \tilde \nN(ds,dv).
\end{align*}
\end{proof}


\subsection{Bounds and representations for  \texorpdfstring{$Z^M$}{Z-M} and \texorpdfstring{$U^M$}{U-M}}

Based on a comparison theorem, we get a first preliminary bound which depends on the truncation level $M$. 

\begin{prop} \label{first-estimates-depending-on-M-pi-eps}  Suppose Assumptions  \ref{X-assumptions} and \ref{Ypath-assumptions} hold.  
 For fixed $M>0$ there is a constant $a_0^M>0$   such that   $\PP$ -a.s.
  we have that for $\lambda$-a.e.~$t \in [0,T]$
   $$ |Z^M_t|  \le a_0^M$$
   and  for $dt\otimes \nu $ a.e. $(t,x) \in [0,T]\times \R_0$
  $$ |U^M_t(x)| \le \kappa_{\rho}(x)a_0^M.$$
  \end{prop}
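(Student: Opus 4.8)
The plan is to combine the Malliavin differentiability obtained in Proposition \ref{Malliavin-diff-trunc-BSDE} with the standard diagonal representation of the control processes. For the truncated (Lipschitz, bounded-terminal) BSDE one has, for a.e.\ $t$, the versions $Z^M_t = D_{t,0} Y^M_t$ and $U^M_t(x) = D_{t,x} Y^M_t$ ($dt\otimes\nu$-a.e.), which is classical in this setting (see \cite{DelongImkeller10,GeissStein16}). It therefore suffices to bound $D_{t,x}Y^M_u$ for $u \ge t$ in such a way that the bound carries the weight $\kappa_{\rho}(x)$ in the jump direction, and then to specialize to $u=t$.

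First I would fix $(t,x)$ and read \eqref{Diff-BSDE} as a BSDE in $u\in[t,T]$ for the triple $(D_{t,x}Y^M, D_{t,x}Z^M, D_{t,x}U^M)$. Applying the chain rule (Lemma \ref{chain-rule-lemma} for the Gaussian direction $x=0$, and Corollary \ref{malliavin-derivative-explicite}/Lemma \ref{functionallem} for $x\neq 0$) to $D_{t,x}f^M(s,X^s,\Theta^M_s)$ turns this into a \emph{linear} equation: the generator becomes $\phi_s + a_s\,D_{t,x}Y^M_s + b_s\,D_{t,x}Z^M_s + \int_{\R_0} c_s(v)\,D_{t,x}U^M_s(v)\,\nu(dv)$, where $\phi_s := (D_{t,x}f^M)(s,X^s,y)|_{y=\Theta^M_s}$ is an explicit source term. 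The truncation $b_M$ caps $|X|_\infty$, $|Z^M|$ and $|U^M|$ at level $M$, so the local-Lipschitz bounds of Assumption \ref{Ypath-assumptions} become genuine bounds: $a_s,b_s,c_s$ are bounded by constants depending on $M$ (through $c$, $\beta M^{\tt r}$, $\gamma M^{\ell}$ and $L_{f,\ry}$). Moreover $c_s(v)=f_{\ru}h_{\ru}\,b_M'(\cdot)\,\kappa(v)$ with $f_{\ru}h_{\ru}\ge -1$ by \eqref{fuhugeminus1} and $0\le b_M'\le 1$, $0\le\kappa\le 1$, whence $c_s(v)\ge -1$.

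Next, using Proposition \ref{bounds-for-DX} (the bound \eqref{upperbound-derivative-X} on $D_{t,x}X$) together with Corollary \ref{cor1} and the estimate \eqref{Df-estimate}, both the terminal value $D_{t,x}g^M(X)$ and the source $\phi_s$ are bounded, for $x\neq0$, by a constant times $\kappa_{\rho}(x)$, and for $x=0$ by a constant (with $K_\sigma$ in place of $\kappa_{\rho}$); all these constants depend on $M$. Comparing $D_{t,x}Y^M$ with the solutions of the linear BSDEs obtained by replacing the terminal value and the source by their (constant) upper and lower bounds yields $|D_{t,x}Y^M_u|\le C_M\,\kappa_{\rho}(x)$ for $x\neq0$ and $|D_{t,x}Y^M_u|\le C_M$ for $x=0$, uniformly in $u\in[t,T]$. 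The requisite comparison theorem applies precisely because the jump coefficient satisfies $1+c_s(v)\ge0$, which is also what makes the associated Dol\'eans--Dade exponential \eqref{the-process-G} nonnegative; alternatively one represents the linear BSDE's solution through this exponential and a change of measure and bounds it directly. Taking $u=t$ and invoking the diagonal representation gives $|Z^M_t|\le a_0^M$ and $|U^M_t(x)|\le\kappa_{\rho}(x)\,a_0^M$ with $a_0^M:=C_M$.

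The step I expect to be the main obstacle is the bookkeeping in the jump direction: one must check that $c_s(v)\ge-1$ (so that comparison applies and the Dol\'eans--Dade exponential stays nonnegative) and, crucially, that the terminal and source bounds genuinely factor the weight $\kappa_{\rho}(x)$ rather than a crude constant in $x$ --- this is exactly the feature of \eqref{upperbound-derivative-X} that lets a single $a_0^M$ control both $Z^M$ and $U^M$. A secondary technical point is justifying the diagonal representation $Z^M_t=D_{t,0}Y^M_t$, $U^M_t(x)=D_{t,x}Y^M_t$ and the integrability needed for the comparison theorem, both of which rest on the Lipschitz-with-bounded-data structure of the truncated problem.
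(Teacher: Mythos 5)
Your proposal is correct and follows essentially the same route as the paper: Malliavin differentiability of the truncated BSDE, the diagonal identification $Z^M_t=\lim_{u\downarrow t}D_{t,0}Y^M_u$, $U^M_t(x)=\lim_{u\downarrow t}D_{t,x}Y^M_u$, $M$-dependent bounds on $D_{t,x}g^M$ and on the source term carrying the weight $\kappa_\rho(x)$ (resp.\ $K_\sigma$), and a comparison argument for the linear(ized) BSDE satisfied by $D_{t,x}Y^M$, whose applicability rests exactly on the sign condition \eqref{fuhugeminus1} together with $0\le b_M'\le 1$ and $0\le\kappa\le 1$ that you identified. The paper implements the comparison by dominating $|D_{t,x}f^M|$ with an absolute-value generator $f^+_{M,x}$ whose associated BSDE with constant terminal value has the explicit deterministic solution $(\overline\Y^{t,x},0,0)$, but this is only a cosmetic difference from your linearization/Dol\'eans--Dade representation.
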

\begin{proof}  

 By Proposition \ref{Malliavin-diff-trunc-BSDE} we know that $(Y^M,Z^M,U^M)$ is Malliavin differentiable, and we have  \eqref{Diff-BSDE}.
Since  $ (\lim_{u \downarrow t} D_{t,0} Y^M_u)_{t\in [0,T]}$ and  $( \lim_{u \downarrow t} D_{t,x} Y^M_u)_{t\in [0,T]}$ for $x\neq 0$ 
have  c\`adl\`ag adapted versions, we get by \cite[Theorem 3.7 and Lemma 3.5]{Chung} that 
    the predictable projections    coincide with   the processes itself up to sets of measure $dt \otimes \PP$ zero, which implies 
 \begin{align} \label{ZandUlimDY}
 Z^M_t=   \lim_{u \downarrow t} D_{t,0} Y^M_u  \,\, a.s. \quad   \text{and} \quad    U^M_t(x) =   \lim_{u \downarrow t} D_{t,x} Y^M_u\,\,  a.s.
 \end{align}
 for $\nu$ a.a. $ x \in \R_0.$   \\
 By assumption we have that  
$$  |g^M(\bx)- g^M(\bx')| \le ( c +\alpha M^\tr) \, \dJ(\bx,\bx'). $$
Then Lemma  \ref{chain-rule-lemma}  (seeing $g^M$ as a special case of $\f$)  implies
\begin{align*}
 |D_{t,x}  g^M(X)|& \le( c +\alpha M^\tr)   |D_{t,x } X|_\infty \le ( c +\alpha M^\tr) e^{L_b T} (\kappa_\rho(x) \one_{x \neq 0}  +   K_{\sigma} \one_{x = 0}) \\
 &=: \xi_{M,x} 
  \end{align*} 
In the same way we get   setting $H(s,{\bf u}) :=  \int_{\R_0}  h(s, b_M ( {\bf u}(v))) \kappa (v)  \nu(dv)$ from
\begin{align*}
&  |  f^M(s,\bx^s, y,z,   H(s,{\bf u}) )  - f^M(s,(\bx')^s, y',z',   H(s,{\bf u}')  ) |  \\
  & \le    (c +\beta M^\tr) \, \dJ(\bx,\bx') + L_{f ,\ry} |y-y'| +  (c +\gamma  M^\ell)|z-z'|    \\
  &\quad   + (c+ \gamma M^{m_1}) \left |   \int_{\R_0} ( h(s, b_M ( {\bf u}(v)))-h(s, b_M ( {\bf u}'(v)))) \kappa (v)  \nu(dv)  \right | \\
  & \le    (c +\beta M^\tr) \, \dJ(\bx,\bx') + L_{f ,\ry} |y-y'| +  (c +\gamma  M^\ell)|z-z'|    \\
  &\quad   + (c+ \gamma M^{m_1})   (c +\gamma  M^{m_2})  \|{\bf u}-{\bf u}'\|_{L^2(\nu)}  \, \| \kappa (v)\|_{L^2(\nu)}
   \end{align*} 
 that    
    $$|D_{t,x} f^M(s,X^s, y,z,  H(s,{\bf u}) )|  \le    (c +\beta M^\tr) e^{L_b T} (\kappa_\rho(x) \one_{x \neq 0}  +   K_{\sigma} \one_{x = 0}) =: c_{M,x} $$
 and for $x \in \R$ we have
 \begin{align}
 |D_{t,x} f^M(s,X^s, \Theta^M)|  &\le     c_{M, x} +   L_{f ,\ry} | D_{t,x} Y^M_s |  +   (c +\gamma  M^\ell)| D_{t,x} Z^M_s | \notag\\
 & \quad +  (c+ \gamma M^{m_1})   (c +\gamma  M^{m_2})     \int_{\R_0} | D_{t,x} U^M_s(v)|  \kappa (v)  \nu(dv) \notag \\
 &=: f_{M,x}^+ (s,D_{t,x} Y^M_s, D_{t,x} Z^M_s, D_{t,x} U^M_s) \label{eq1}
\end{align}

In \cite[Theorem 3.4]{GeissStein20} using a comparison result it is shown  that   for the BSDEs   with data   $(\pm \xi_{M,x}, \pm f_{M,x}^+)$
we have that  for the  corresponding solution  processes  $\pm \overline\Y^{t,0}$ for  $(\pm \xi_{M,0}, \pm f_{M,0}^+)$  and    $(\pm \xi_{M,x}, \pm f_{M,x}^+)$ ($x\neq 0)$ for   $\pm \overline\Y^{t,x}$   that  for $0\le t\le u\le T$ 
$$  -  \overline \Y^{t,0}_u \le D_{t,0} Y^M_u \le \overline\Y^{t,0}_u$$
and 
$$  -  \overline \Y^{t,x}_u \le D_{t,x} Y^M_u \le \overline\Y^{t,x}_u.$$
Moreover,  since  $\xi_{M,0}$ is just a constant, and from the structure of $f_{M,0}^+$ given in  \eqref{eq1} one concludes by \cite[Theorem 2.3]{GeissStein20}  that there is a  unique solution to

 \begin{align*}
\overline\Y^{t,0}_u &= \xi_{M,0}   +\int_u^T f_{M,0}^+ (s, \overline\Y^{t,0}_s ,  \overline\Z^{t,0}_s, \overline \U^{t,0}_s)ds  \\ 
& \quad -  \int_u^T\overline\Z^{t,0}_{s}   dW_s -\int_{{]u,T]}\times{\R_0}}  \overline \U^{t,0}_{s}(v) \tilde \nN(ds,dv).
\end{align*}
which is  given by $(\overline \Y^{t,0}, 0,0).$  Hence 
\begin{align*}
Z^M_t=   \lim_{u \downarrow t} D_{t,0} Y^M_u  \le   \overline\Y^{t,0}_t  = \xi_{M,0} \, e^{L_{f ,\ry}(T-t)} +\int_t^T   c_{M,0} \, e^{L_{f ,\ry}(s-t)}  ds 
\end{align*}
with $\xi_{M,0} =  ( c +\alpha M^\tr) e^{L_b T}    K_{\sigma}$ and   $ c_{M,0}= (c +\beta M^\tr) e^{L_b T} K_{\sigma}.$
Similarly, 
 \begin{align*}
\overline\Y^{t,x}_u &= \xi_{M,x}   +\int_t^T f_{M,x}^+ (s, \overline\Y^{t,x}_s ,  \overline\Z^{t,x}_s, \overline \U^{t,x}_s)ds  \\
&\quad-     \int_t^T\overline\Z^{t,x}_{s}   dW_s -\int_{{]t,T]}\times{\R_0}}  \overline \U^{t,x}_{s}(v) \tilde \nN(ds,dv).
\end{align*}
implies \begin{align*}
U^M_t(x)=   \lim_{u \downarrow t} D_{t,x} Y^M_u  \le   \overline\Y^{t,x}_t  = \xi_{M,x} e^{L_{f ,\ry}(T-t)} +\int_t^T   c_{M,x} e^{L_{f ,\ry}(s-t)}  ds 
\end{align*}
with $\xi_{M,x} =  ( c +\alpha M^\tr) e^{L_b T}   \kappa_\rho(x)$ and   $ c_{M,x}= (c +\beta M^\tr) e^{L_b T} \kappa_\rho(x),$
so that we can choose
 $$a_0^M   :=  ( \xi_{M,0} \, e^{L_{f ,\ry}T} +T \,  c_{M,0}  \, e^{L_{f ,\ry}T} ) (1+   K_{\sigma}  ). $$

 \end{proof}

We continue with  the main result of this section which gives some uniform bounds that will be crucial to obtain our uniqueness and existence result for the BSDE \eqref{fbsde}.
    
\begin{prop} \label{bounds-independend-from-M}  Suppose Assumptions  \ref{X-assumptions} and \ref{Ypath-assumptions} hold.   Then there exist $a_\infty >0$ and $b_\infty>0$ (not depending on $ M$)   such that 
    \begin{align} \label{Z-and-U-pi-bound}
    |Z^M_t| & \le a_\infty +  b_\infty |X^t|_\infty^{\tr}   \notag \\
    |U^M_t(x)| &\le \kappa_{\rho}(x)(a_\infty+  b_\infty |X^t|_\infty^{\tr} ).
    \end{align}

    As a consequence we also get that there exists a $c_\infty >0$ (not depending on $M$) such that 
    \begin{align} \label{Y-pi-bound}
     |Y^M_t|  \le c_\infty(1 +   |X^t|_\infty^{\tr+1}).
     \end{align}
\end{prop}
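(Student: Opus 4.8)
The plan is to bootstrap the crude, $M$-dependent bound of Proposition \ref{first-estimates-depending-on-M-pi-eps} into the announced $M$-independent one by an iteration on the constants, in the spirit of \cite{Richou12}, exactly as the exponents $0\le\tr\le\frac{1}{2\ell}$ and $m_1+m_1m_2+m_2\le\ell$ are tailored to permit. Throughout I work from the representation \eqref{ZandUlimDY}, $Z^M_t=\lim_{u\downarrow t}D_{t,0}Y^M_u$ and $U^M_t(x)=\lim_{u\downarrow t}D_{t,x}Y^M_u$, so that bounding $Z^M,U^M$ amounts to bounding the Malliavin derivative $D_{t,x}Y^M_u$, which solves the linear BSDE \eqref{Diff-BSDE}. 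I take as induction hypothesis a bound of the form $|Z^M_t|\le a+b|X^t|^\tr_\infty$ and $|U^M_t(x)|\le\kappa_{\rho}(x)(a+b|X^t|^\tr_\infty)$ (valid with $a=a_0^M,\ b=0$ by Proposition \ref{first-estimates-depending-on-M-pi-eps}) and show it forces a new bound with constants $(a',b')$ depending on $(a,b)$ but not on $M$.

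Second, I estimate the coefficients of \eqref{Diff-BSDE}, as begun in \eqref{eq1}, but now using the induction hypothesis in place of the truncation level. The weak $z$-derivative of $f^M$ is bounded by $c+\gamma|Z^M_s|^\ell\le c+\gamma(a+b|X^s|^\tr_\infty)^\ell$, while the contribution of the $U$-variable passes through $H^M_s$. Writing $W_s:=a+b|X^s|^\tr_\infty$ and using $h(s,0)=0$ together with Assumption \ref{Ypath-assumptions}\eqref{h-assumption} and $\kappa_{\rho}\in L^2(\nu)\cap L^\infty(\nu)$, one gets $|H^M_s|\le C_1 W_s+C_2 W_s^{m_2+1}$, so the coefficient in front of $D_{t,x}U^M_s$ is controlled by $(c+\gamma|H^M_s|^{m_1})(c+\gamma|U^M_s(v)|^{m_2})$, whose largest power of $W_s$ is $W_s^{m_1+m_1m_2+m_2}$. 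The constraint $m_1+m_1m_2+m_2\le\ell$ then shows that both the $z$- and the $u$-coefficient grow at most like $1+W_s^\ell\lesssim 1+|X^s|^{\tr\ell}_\infty$, and $\tr\ell\le\frac12$ makes this growth sublinear in $|X^s|_\infty$. The genuine source terms, stemming from $D_{t,x}g^M(X)$ and the $\bx$-derivative of $f^M$, are bounded via Lemma \ref{chain-rule-lemma} by $(c+C|X^s|^\tr_\infty)(\kappa_{\rho}(x)\one_{x\neq0}+K_\sigma\one_{x=0})$ and, crucially, do not involve $(a,b)$.

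Third, exactly as in Proposition \ref{first-estimates-depending-on-M-pi-eps} I compare $D_{t,x}Y^M_u$ with the solution $\overline{\Y}^{t,x}$ of the associated linear BSDE whose driver is affine in $(y,z,u)$ with the coefficients bounded above; the sign condition \eqref{fuhugeminus1} and $\kappa=1\wedge|v|$ guarantee (Remark \ref{the-impossible}) that the Doléans-Dade exponential underlying the comparison and change of measure stays non-negative. Since under the induction hypothesis all coefficients are bounded, a Girsanov change of measure $\QQ$ absorbs the $z$- and $u$-coefficients and yields $\overline{\Y}^{t,x}_t=\EE^\QQ_t\big[\,\xi_x\,\Gamma_{t,T}+\int_t^T\phi_x(s)\,\Gamma_{t,s}\,ds\,\big]$, where $\Gamma_{t,s}=\exp(\int_t^s a^y_r\,dr)\le e^{L_{f,\ry}T}$, and $\xi_x,\phi_x$ are the source bounds from the previous step.

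The main difficulty, and the last step, is to bound $\EE^\QQ_t[\,1+|X|^\tr_\infty\,]$ by $C(1+|X^t|^\tr_\infty)$ \emph{uniformly in} $M$, since $\QQ$ itself depends on $M$ through the coefficients. Here I use that $\sigma$ and $\rho$ are deterministic: under $\QQ$ the law of $X$ changes only by a drift/compensator shift equal to the $(z,u)$-coefficients, which by the second step grow only like $|X^s|^{\tr\ell}_\infty\le|X^s|^{1/2}_\infty$. Splitting $|X|^\tr_\infty\le|X^t|^\tr_\infty+(\sup_{t\le s\le T}|X_s-X_t|)^\tr$ by subadditivity of $r\mapsto r^\tr$ ($\tr\le1$), the $\fF_t$-measurable term passes through $\EE^\QQ_t$, while the sublinear drift lets one control the $\QQ$-moments of the future increments by a Bihari/Gronwall argument in the spirit of Lemma \ref{exponential-bound}, with constants depending on $(a,b)$ but not on $M$. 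This produces the recursion $(a,b)\mapsto(a',b')$, and the sublinearity $\tr\ell\le\frac12$ keeps the iterates in a bounded region, so they converge to $M$-independent limits $a_\infty,b_\infty$; since $|Z^M_t|\le a_n+b_n|X^t|^\tr_\infty$ holds at every step, passing to the limit yields \eqref{Z-and-U-pi-bound}. Finally, \eqref{Y-pi-bound} follows by inserting \eqref{Z-and-U-pi-bound} into the BSDE \eqref{BSDE-truncated}, taking $\EE[\,\cdot\mid\fF_t]$ (no change of measure is needed now, as $Z^M,U^M$ are already bounded), using $|g^M(X)|\lesssim 1+|X|^{\tr+1}_\infty$ and the driver bound $|f^M(s,\cdot)|\lesssim k_f(s)+1+|X^s|^{\tr+1}_\infty+L_{f,\ry}|Y^M_s|$ (where $\tr\ell+\tr\le\tr+1$ is used), a Gronwall step for the $|Y^M_s|$-term, and $\EE[\,|X|^{\tr+1}_\infty\mid\fF_t]\le C(1+|X^t|^{\tr+1}_\infty)$ from Lemma \ref{exponential-bound}.
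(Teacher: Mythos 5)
Your proposal follows essentially the same route as the paper: it bootstraps the crude $M$-dependent bound of Proposition \ref{first-estimates-depending-on-M-pi-eps} through the representation $Z^M_t=\lim_{u\downarrow t}D_{t,0}Y^M_u$, $U^M_t(x)=\lim_{u\downarrow t}D_{t,x}Y^M_u$, linearizes the derivative BSDE, controls the $z$- and $u$-coefficients via $m_1+m_1m_2+m_2\le\ell$ and $\tr\ell\le\tfrac12$, absorbs them by the Dol\'eans-Dade/Girsanov change of measure (made rigorous by Novikov and the exponential moments of $|X|_\infty$), bounds the conditional moment of $|X|^\tr_\infty$ under the new measure by a stochastic Gronwall argument, and iterates the resulting map on the constants, exactly as in Lemmas \ref{coefficients-path}, \ref{unif-martingale}, \ref{lem_cond_est_X_pi}, \ref{iterated-estimate} and the concluding iteration $a_{n+1}=(C+1)^2(1+a_n^{r\ell})$. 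The treatment of the $Y$-bound by conditioning the truncated BSDE and applying Gronwall also matches the paper's argument.
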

\bigskip
The  proof uses  several technical lemmata  to which we turn next.
We  derive   representations for $Z^M$ and  $U^M$ from  \eqref{Diff-BSDE}   and  \eqref{ZandUlimDY}  writing 
  \begin{align} \label{Z-as-DY}    Z^M_t  \one_{x = 0} +  U^M_t(x) \one_{x \neq 0}   &=  D_{t,x} g^M(X^T) + \int_t^T  D_{t,x} f^M (s, X^s, \Theta^M_s )ds \notag \\
  & -     \int_t^T D_{t,x}   Z^M_s   dW_s  -\int_{{]t,T]}\times{\R_0}} D_{t,x}    U^M_s(v) \tilde \nN(ds,dv).  
  \end{align}
 where $\Theta^M_s= (Y^M_s, Z^M_s,  H^M_s) , $     and $ H^M_s =  \int_{\R_0}  h(s, b_M ( U^M_s(v))) \kappa (v)  \nu(dv).$

By the chain rule \eqref{chain-rule} denoting 
\begin{align}  \label{fhat-zero-terms}
\wf^0(s,  D_{t,0} X^s)&:= D_{t,0} f^M(s,X^s ,y,z,u)|_{(y,z,u)=(Y^M_s,Z^M_s,H^M_s)}  \notag \\
(\wf_{\ry}^0,  \wf_{\rz}^0,  \wf_{\ru}^0)&:=(G_1, G_2, G_3) 
\end{align}
we have
\begin{align} \label{the x=0 equation}
D_{t,0} f^M (s, X^s, \Theta^M_s )
   &= \wf^0(s,  D_{t,0} X^s)  \notag\\
  &\quad  +  \wf_{\ry}^0(s) \, D_{t,0} Y^M_s  +  \wf_{\rz}^0(s) D_{t,0} Z^M_s  +\wf_{\ru}^0(s)  \, D_{t,0} H^M_s
\end{align}
and furthermore, $D_{t,0} H^M_s =    \int_{\R_0}  \wh^0_{\ru}(s)  \,  D_{t,0}   U^M_s(v) \kappa (v)  \nu(dv). $ \medskip

To write the counterpart to    the above relation for a.a. $(t,x) \in [0,T]\times \R_0$  we use that there exist, by Corollary 
\ref{malliavin-derivative-explicite} and the mean value theorem, some measurable functions 
  $\eta :\Omega\times [0,T]\times \R\times [0,T] \to  [0,1]^{3}$, $\eta=\eta(\omega,t,x,s),$ 
  and $\vartheta:\Omega\times [0,T]\times \R_0\times [0,T]\times \R_0 \to [0,1]$ with $\vartheta=\vartheta(\omega,t,x,s,v)$ such that  
  (we use the abbreviation $ \wf^x_{ x_k}(s)$ etc to indicate that we do not have a partial derivative of $f$ but a partial derivative of  the truncated  $f^M$  which is taken for the jump case at an intermediate point.)

   \begin{align} \label{f-variables}
  \wf^x(s, D_{t,x}X^s) &:= f^M(s, X^s +  D_{t,x} X^s, \Theta^M_s + D_{t,x}\Theta^M_s)  \notag \\& \quad -   f^M(s, X^s,   \Theta^M_s+ D_{t,x}\Theta^M_s) \notag \\
  \wf_{\rm y}^x(s)&:= \partial_{y}  f^M(s, X^s, Y^M_s + \eta_1 D_{t,x} Y^M_s\!\!, (Z^M_s\!\!, H^M_s) + D_{t,x}  (Z^M_s, H^M_s)  ) \notag\\
   \wf_{\rm z}^x(s)&:= \partial_{z}  f^M(s, X^s, Y^M_s, Z^M_s +\eta_2 D_{t,x} Z^M_s, H^M_s + D_{t,x}  H^M_s  ) , \notag\\
    \wf_{\rm u}^x(s)&:= \partial_{u}  f^M(s, X^s, Y^M_s, Z^M_s , H^M_s +\eta_3 D_{t,x}  H^M_s  ) , \notag\\
    \wh_{\rm u}^x(s)&:= \partial_u h(s, U^M_s(v) + \vartheta D_{t,x}U^M_s(v))
    \end{align}
  
  By using a telescopic sum for $x  \in \R_0$  and \eqref{the x=0 equation} for $x=0$ we have
  \begin{align}  \label{mean-value-term-f}
     &D_{t,x} f^M(s, X^s,  \Theta^M_s) = \wf^x(s,  D_{t,x} X^s)+\wf^x_{\ry}(s)D_{t,x}Y^M_s \notag\\
     & \quad \quad \quad+\wf^x_{\rz}(s)D_{t,x}Z^M_s+\wf^x_{\ru}(s)      \int_{\R_0} \wh^x_{\ru}(s) \, D_{t,x}   U^M_s(v) \kappa (v)  \nu(dv).
  \end{align}

 We rewrite \eqref{Z-as-DY} as
\begin{align} \label{U-pi-M-ep-repr}  
 & Z^M_t  \one_{x = 0} +  U^M_t(x) \one_{x \neq 0}  \notag \\
    & =  D_{t,x} g^M(X^T) +  \int_t^T  \wf^x(s,  D_{t,x} X^s) \notag  +  \wf^x_{\ry}(s) D_{t,x}Y^M_s ds  \notag \\
  &-     \int_t^T D_{t,x}   Z^M_s   (dW_s-  \wf^x_{\rz} (s)ds)  \notag  \\
  & -\int_{{]t,T]}\times{\R_0}} D_{t,x}    U^M_s(v) \, (\tilde \nN(ds,dv)-
    \widehat f^x_{\ru}(s)   \widehat h^x_{\ru} (s) \,\, \kappa (v) \,\,ds \nu(dv)  ),
  \end{align}
and  consider the adjoint equation to \eqref{Z-as-DY}
\begin{align} \label{the-process-Gamma}
\Gamma^x_{t,u}  &= 1 + \int_t^u   \widehat f^x_{\ry}(s) \Gamma^x_{t,s} ds +  \int_t^u  \wf^x_{\rz}(s) \Gamma^x_{t,s} dW_s \notag\\ 
& \quad+  \int_t^u   \int_{\R_0} \wf^x_{\ru} (s) \wh^x_{\ru}(s)   \,\, \kappa (v) \,\, \Gamma^x_{t,s-} \tilde \nN(ds,dv),  \,\,u \in [t,T]. 
\end{align}
  
  \begin{lem}  \label{coefficients-path}    Suppose Assumptions  \ref{X-assumptions} and \ref{Ypath-assumptions} hold and that  there exist constants   $a, b \geq 1,$ (possibly depending on $M$)   such that, for a.a. $t \in [0,T]$, $x \in \R_0$,
    $$ |Z^M_t|  \le a +  b |X^t|_\infty^{\tr}  \quad \text{ and }  \quad |U^M_t(x)| \le \kappa_{\rho}(x)(a+ b|X^{t}|_\infty^{\tr} ).$$ 
      \begin{enumerate}[(1)]
     \item \label{derivative-estimate}   Then it holds for a.a. $ s \in [0, t]$  (recall \eqref{the-H})
    \begin{align*} 
      |D_{s,x}Z^M_t| &\le  C(1+ a +  b |X^t|_\infty^{\tr}) , \quad x\neq 0,   \\  
      |D_{s,x} U^M_t(v)  |& \le \kappa_{\rho}(v) C(1+ a +  b |X^t|_\infty^{\tr}), \quad x\neq 0,  \\
       |D_{s,x} H^M_t|  &\le   C(1 + a^{m_2+1}+ b^{m_2+1}   |X^t|_\infty^{\tr (m_2+1)}) , \quad x\neq 0,  
            \end{align*}
     where the constants $C$ depend on $\kappa_{\rho}$ through $\kappa_{\rho,2}$ and $\kappa_{\rho,\infty}$.
      \item  \label{f-estimate}  For the expressions defined in  \eqref{fhat-zero-terms} and \eqref{f-variables}  we have for a.a. $t \in [0,s]$ 
      \begin{align*}
    |\wf^x(s, D_{t,x}X^s)|   &\le C (1+ |X^s|_\infty^{\tt r} ),\;\; \forall k=1,\cdots,N\,,   \\  
    |\wf^x_{\ry}(s)| &\le L_{f,{\ry}},  \\
    |\wf^x_{\rz}(s)| &  \le  C (1+  a^{\ell} +  b^{\ell} |X^s|_\infty^{{\tt r} \ell}),      \\
    | \wf^x_{\ru}(s)  \wh^x_{\ru} (s)| &\le  C (1+ a^\ell +   b^\ell |X^s|_\infty^{{\tt r} \ell}).
     \end{align*}
     \end{enumerate}
      \end{lem}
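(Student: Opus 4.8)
The plan is to prove the two assertions essentially independently, in the stated order. Item \ref{derivative-estimate} (the pointwise Malliavin-derivative bounds, claimed only for the jump direction $x\neq0$) is entirely self-contained; item \ref{f-estimate} (the coefficient bounds) then follows by inserting the growth hypotheses of Assumption \ref{Ypath-assumptions} at the relevant evaluation points, the control of $H^M$ being extracted directly from the assumed bound on $U^M$.

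For item \ref{derivative-estimate} I would exploit that, by Proposition \ref{Malliavin-diff-trunc-BSDE}, $Z^M_t$ and $U^M_t(v)$ lie in $\mathbb{D}_{1,2}^{\R_0}$, so Lemma \ref{functionallem} gives the explicit jump-difference representation $D_{s,x}Z^M_t=Z^M_t(\mathcal{L}+x\one_{[s,T]})-Z^M_t(\mathcal{L})$, and similarly for $U^M$. The assumed bounds $|Z^M_t|\le a+b|X^t|_\infty^{\tr}$ and $|U^M_t(v)|\le\kappa_\rho(v)(a+b|X^t|_\infty^{\tr})$ hold $\PP$-a.s.; by Lemma \ref{shiftthm} they transfer, up to a $\PP\otimes\m$-null set, to the shifted paths, where the forward process becomes $X+D_{s,x}X$. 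Since \eqref{upperbound-derivative-X} yields $|D_{s,x}X^t|_\infty\le e^{L_bT}\kappa_\rho(x)\le e^{L_bT}\kappa_{\rho,\infty}$ and $u\mapsto u^{\tr}$ is subadditive (as $\tr\le\tfrac12$), one gets $|(X+D_{s,x}X)^t|_\infty^{\tr}\le|X^t|_\infty^{\tr}+C$; adding the two bounds produces $|D_{s,x}Z^M_t|\le C(1+a+b|X^t|_\infty^{\tr})$ and the analogous estimate for $U^M$, the common factor $\kappa_\rho(v)$ being preserved. The bound on $D_{s,x}H^M_t$ then follows from \eqref{the-H}: writing it once more as a difference of $\nu$-integrals, I would insert Assumption \ref{Ypath-assumptions}(vi), bound the increment $D_{s,x}U^M_t(v)$ by the $U^M$-estimate just obtained and the factors $|U^M_t(v)|^{m_2}$ by $\kappa_\rho(v)^{m_2}(a+b|X^t|_\infty^{\tr})^{m_2}$, and integrate against $\kappa(v)\nu(dv)$; finiteness of $\int\kappa_\rho^{m_2+1}\kappa\,d\nu$, secured by $\kappa_\rho\in L^2(\nu)\cap L^\infty(\nu)$ and $\kappa\in L^2(\nu)$, then yields the exponent $m_2+1$.

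For item \ref{f-estimate} each coefficient is estimated at its evaluation point, using $0\le b_M'\le1$ and $|b_M(\cdot)|\le|\cdot|$. The term $\wf^x$ is controlled by the path-Lipschitz bound in Assumption \ref{Ypath-assumptions}(ii) together with \eqref{upperbound-derivative-X} (for $x=0$ this is exactly \eqref{Df-estimate}), giving $C(1+|X^s|_\infty^{\tr})$, while $|\wf^x_\ry|\le L_{f,\ry}$ is immediate from Assumption \ref{Ypath-assumptions}(iii) (respectively from $|G_1|\le L_{f,\ry}$ in Lemma \ref{chain-rule-lemma}). For $\wf^x_\rz$ I would use $|\partial_z f^M|\le c+\gamma|b_M(z)|^\ell\le c+\gamma|z|^\ell$ at $z=Z^M_s+\eta_2 D_{s,x}Z^M_s$, which for $x\neq0$ is a convex combination of $Z^M_s$ and its shift, both controlled by $a+b|X^s|_\infty^{\tr}$, whence $C(1+a^\ell+b^\ell|X^s|_\infty^{\tr\ell})$. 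The decisive term is $\wf^x_\ru\wh^x_\ru$: Assumption \ref{Ypath-assumptions}(v),(vi) give $|\wf^x_\ru|\le c+\gamma|H^M_s+\cdots|^{m_1}$ and $|\wh^x_\ru|\le c+\gamma|U^M_s(v)+\cdots|^{m_2}$, and from $h(s,0)=0$ and (vi) one has $|H^M_s|\le C(a+b|X^s|_\infty^{\tr})^{m_2+1}$, so the product carries the exponent $m_1(m_2+1)+m_2=m_1m_2+m_1+m_2$; by the standing assumption $m_1m_2+m_1+m_2\le\ell$ and $a\ge1$ this is dominated by $(a+b|X^s|_\infty^{\tr})^\ell\le C(1+a^\ell+b^\ell|X^s|_\infty^{\tr\ell})$, the $\kappa_\rho(v)$-factors being absorbed into $C$ through $\kappa_{\rho,\infty}$.

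I expect the main obstacle to lie in the $x=0$ direction of item \ref{f-estimate}. There $\wf^0_\rz$ and $\wf^0_\ru$ are the chain-rule coefficients $G_2,G_3$ of Lemma \ref{chain-rule-lemma}, for which only the crude bound $|G_k|\le L_{y_k}$ is recorded; applied to the truncated generator this is the $M$-dependent Lipschitz constant $c+\gamma M^\ell$ and is therefore too weak. To reach the $M$-independent estimate I would need the finer description of $G_k$ coming from the Sugita-type construction in the appendix, namely that $G_2$ (respectively $G_3$) is an average of $\partial_z f^M$ (respectively $\partial_u f^M$) localized around $Z^M_s$ (respectively $H^M_s$), so that the growth hypotheses apply at points controlled by $a+b|X^s|_\infty^{\tr}$ rather than by $M$. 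A subsidiary care-point is the rigorous transfer of the $\PP$-a.s. bounds to the perturbed paths — via Lemma \ref{shiftthm} in the jump case and via the distributional invariance $(W^\varphi,\tilde\nN)\stackrel{d}{=}(W,\tilde\nN)$ in the coupling used for $x=0$ — together with the $\nu$-integrability of the kernels $\kappa_\rho^{m_2+1}\kappa$ recurring throughout the $U$- and $H$-estimates.
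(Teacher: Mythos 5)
Your proposal is correct and follows essentially the same route as the paper: your item \eqref{derivative-estimate} argument is exactly the paper's auxiliary Lemma \ref{derivative-bounds} (jump-difference representation via Lemma \ref{functionallem}, transfer of the a.s.\ bounds to the shifted paths via Lemma \ref{shiftthm}, subadditivity of $u\mapsto u^{\tr}$ and the triangle inequality), and your item \eqref{f-estimate} is the same coefficient-by-coefficient estimation at the mean-value evaluation points with the identical arithmetic $m_1(m_2+1)+m_2\le\ell$. Your concern about the $x=0$ coefficients is well placed rather than a defect of your argument: the paper dismisses that case with the single phrase ``similar computations,'' so your observation that the crude bound $|G_k|\le L_{y_k}$ is $M$-dependent for the truncated generator and that one needs the localized description of $G_2,G_3$ as averages of $\partial_z f^M,\partial_u f^M$ near $Z^M_s,H^M_s$ pinpoints exactly what the paper's own write-up leaves implicit.
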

  \begin{proof} 
    For $x \neq 0,$  since $ Z^M_t,$  $U^M_t$   and $H^M_t$ are $\ftn_t$ -measurable where  $(\ftn_t)_{t\in [0,T]}$ is the augmented  
    natural filtration generated by $(\mathcal{L}_s )_{s\in [0,T]}$ by \cite[Theorem 3.4]{Steinicke} we can  represent   them for fixed $t$  as  functions of the  
    L\'evy process $ \mathcal{L}$, denoting  $\mathcal{L}^t :=(\mathcal{L}_{s \wedge t})_{s\in [0,T]} $:    
    $$Z^M_t = F_Z(  \mathcal{L}^t),   \quad U^M_t(\cdot) = F_U(\cdot, \mathcal{L}^t),  \text{ and }   \quad  H^M_t = F_H(  \mathcal{L}^t) \quad   a.s. $$ 
    Similarly, $a +  b |X^t|_\infty^{\tr} = F_X(  \mathcal{L}^t).$   Here $F_Z,F_H,F_X$  are  $\mathcal{G}_t$-measurable and $F_U$ is $\mathcal{B}(\R_0) \otimes \mathcal{G}_t$-measurable.
   The lemma below uses the assumptions on $X$ and provides the bounds for item \eqref{derivative-estimate}:
    
     \begin{lem}  \label{derivative-bounds}
    Assume $F:D([0,T]) \to \R$ is $\mathcal{G}_t$-measurable.
    Then, for a.a.  $s \in [0, t]$  and $x\neq 0$  and any  constants $A,B \ge 0$ it holds that
    $$ |F(\mathcal{L}^t)| \le A + B |X^t|_\infty^{\tr}   \,\, \text{ implies} \,\, |D_{s,x}  F(\mathcal{L}^t)| \le 2A +2 B |X^t|_\infty^{\tr} + BT^{\tr} e^{L_b \tr T}\kappa_{\rho,\infty}^{\tr} $$
     for $ \PP \otimes  \m$-a.e.~ $(\omega, s,x).$ 
     \end{lem} 
    \begin{proof}
   Since we have an a.s.~representation   $F_X( \mathcal{L}^t) =  A + B |X^t|_\infty^{\tr}$,  we get by our assumption that setting
    $$\Lambda  := \{ \tx \in  D[0,T] :   F( \tx^t)   >  F_X( \tx^t) \}$$
     leads to $\PP( \mathcal{L}\in \Lambda ) = 0. $  Then   by  Lemma \ref{shiftthm},
     \begin{align} \label{shift-relation}
      \PP \otimes  \m \Big( (\omega, s,x): F(   \mathcal{L}^t +   x\one_{[s,t]})    > F_X(\mathcal{L}^t  +  x\one_{[s,t]} )  \Big ) = 0.
      \end{align} 
    By Lemma \ref{functionallem}, Corollary \ref{malliavin-derivative-explicite} and   Proposition \ref{bounds-for-DX},
      \begin{align*}
     F_X( \mathcal{L}^t  +  x\one_{[s,t]} ) &= D_{s,x} (A +  B |X^t|_\infty^{\tr})  +  A +  B |X^t|_\infty^{\tr}  \\
     &= B| D_{s,x}  X^t   +X^t   |_\infty^{\tr}  - B |X^t|_\infty^{\tr}   +  A +  B |X^t|_\infty^{\tr} \\
      &=  A + B| D_{s,x}  X^t   +X^t |_\infty^{\tr} \\
      &\le   A + B |X^t|_\infty^{\tr} + B T^{\tr} e^{L_b \tr T} \kappa_\rho(x)^{\tr}.
     \end{align*} 
     Since we have $D_{s,x}  F( \mathcal{L}^t)   +F(\mathcal{L}^t)  =  F(  \mathcal{L}^t +   x\one_{[s,t]}) $  it follows by \eqref{shift-relation} that
      \begin{align*}
D_{s,x}  F( \mathcal{L}^t)  +F( \mathcal{L}^t) 
   &  \le  F_X(\mathcal{L}^t  +  x\one_{[s,t]} )   \\
   &\le   A + B |X^t|_\infty^{\tr} + B T^{\tr} e^{L_b \tr T} \kappa_\rho(x)^{\tr}. 
        \end{align*} 

         Since we also have by assumption that
         $ - F( \mathcal{L}^t)  \le    A + B |X^t|_\infty^{\tr}= F_X(\mathcal{L}^t)  $
         we can repeat the above arguments to get
\begin{align*}      - D_{s,x}  F( \mathcal{L}^t) -F(\mathcal{L}^t) &= -F(\mathcal{L}^t  +  x\one_{[s,t]} ) \le F_X( \mathcal{L}^t  +  x\one_{[s,t]} )
         \end{align*}
       we arrive eventually at
   \begin{align*}
   |D_{s,x}  F(\mathcal{L}^t) | &\le |D_{s,x}  F( \mathcal{L}^t)   +F(\mathcal{L}^t)| + |F( \mathcal{L}^t)| \\
   &\le 2( A + B |X^t|_\infty^{\tr} )+ B T^{\tr} e^{L_b \tr T} \kappa_\rho(x)^{\tr}.
        \end{align*} 
  \end{proof}
    \bigskip 
    
    We continue with the proof of Lemma \ref{coefficients-path}.  The estimates in  item \eqref{derivative-estimate}  follow now immediately from the assumed bounds on $Z^M_t$ and   $U^M_t.$  
  By Assumption  \ref{Ypath-assumptions} \eqref{h-assumption}
 we have  $|h(s,u)| \le  \left (c+ \frac{\gamma}{2} |u|^{m_2} \right )|u| $ so that  the assumption  $|U^M_t(x)| \le \kappa_{\rho}(x)(a+ b|X^{t}|_\infty^{\tr} )$ implies
      \begin{align} \label{inner-product-path}
      &  |H^M_t |  = \left |   \int_{\R_0}  h(s, b_M ( U^M_s(v)))  \,  \kappa (v)  \nu(dv)\right | \notag \\
      &\le \int_{\R_0}  \left (c+ \frac{\gamma}{2} |U^M_s(v)|^{m_2} \right )|U^M_s(v)|  \,  \kappa (v)  \nu(dv)  \notag\\
      &\le   \int_{\R_0}  \kappa(v)  \kappa_{\rho}(v) \nu(dv)  \,\,  \left (c+ \frac{\gamma}{2} \left [\kappa_{\rho,\infty}(a+ b|X^{t}|_\infty^{\tr} )\right ]^{m_2}\right ) (a+ b|X^t|_\infty^{\tr} )   \notag \\
        & \le C(1 + a^{m_2+1}+ b^{m_2+1}   |X^t|_\infty^{\tr (m_2+1)} ),
        \end{align}  
     where  $C$ depends on $\gamma,\kappa_{\rho,2}$ and $\kappa_{\rho,\infty}$.

    \bigskip      
         
   We show  item \eqref{f-estimate}.    For $x \in \R_0,$ by  Assumption \ref{Ypath-assumptions} and Proposition \ref{bounds-for-DX}, 
   \eqref{f-variables} and item \eqref{derivative-estimate} we get  
    \begin{align*}
    |\wf^x(s, D_{t,x}X^s)| &
    = |f^M(s, X^s +  D_{t,x} X^s, \Theta^M_s + D_{t,x}\Theta^M_s) \\&-   f^M(s, X^s,   \Theta^M_s+ D_{t,x}\Theta^M_s)|\\
    &\le \left (c+ \frac{\beta}{2}( 2|X^s|_\infty^{\tt r} +  | D_{t,x}X^s|_\infty^{\tt r}) \right) | D_{t,x}X^s|_\infty^{\tt r} \\
    &\le  C (1+  |X^s|_\infty^{\tt r} ) \\
    & \text{ with } \,\, C= C(L_b, \tr, T, \kappa_{\rho,\infty}, \beta)\\
    |\wf^x_{\ry}(s)| &\le L_{f,{\ry}} \\
    |\wf^x_{\rz}(s)| &\le   \left (c+\gamma 2^{\ell} (|Z^M_s|^\ell + | D_{t,x}Z^M_s  |^\ell) \right )
    \le  C (1+  a^{\ell} +  b^{\ell} |X^s|_\infty^{{\tt r}\ell}), \\
    |\wf^x_{\ru}(s)| &\le \left (c+\gamma 2^{m_1} \Big (\big |H^M_s\big|^{m_1}  +  \big|D_{t,x} H^M_s\big|^{m_1} \Big)\right ) \\
    |\wf^x_{\ru}(s) \wh^x_{\ru}(s) | &\le \left ( c +\gamma  2^{m_1} \Big( \big|H^M_s\big|^{m_1} + \big| D_{t,x}H^M_s\big|^{m_1}\Big ) \right ) \;\; \\
    &  \quad\times  ( c+ \gamma 2^{m_2} (|U^M_s(v)|^{m_2}  + |D_{t,x} U^M_s(v)|^{m_2}) ).
     \end{align*}
    
    Then  by \eqref{inner-product-path} and  item \eqref{derivative-estimate}
    \begin{align*}
     | \wf^x_u (s) \wh^x_u(s) | &\le C( c  + a^{(m_2+1)m_1}   + b^{(m_2+1)m_1}   |X^s|_\infty^{\tr(m_2+1)m_1} )\\
     & \quad \times (c+ (\kappa_{\rho}(v) C(1+ a +  b |X^s|_\infty^{\tr}) )^{ m_2})\\
     &\le  C\left (\!1+ a^{(m_1 + m_1m_2 +m_2)}\!+    b^{(m_1 + m_1m_2 +m_2)} |X^s|_\infty^{\tr (m_1 + m_1m_2 +m_2)} \!\right ) \\
      &\le  C\left (1+ a^{\ell}+    b^{\ell} |X^s|_\infty^{\tr \ell} \right ) 
    \end{align*}
    where $C$ depends on $\gamma,\kappa_{\rho,2}$ and $\kappa_{\rho,\infty}$.

For $x=0$ we get  the same bounds by similar computations.
    \end{proof}
    
For a.a.~$(s,x)  \in [0,T]\times\R$,  the process  $(G^x_{s,t})_{t\in[s,T]}$  defined by
   \begin{align}  \label{the-process-G}
    G^x_{s,t}  &= 1  +  \int_s^t \wf^x_{\rz}(r) \, G^x_{s,r} dW_r +  \int_{]s,t]\times \R_0} \wf^x_{\ru}(r)  \wh^x_{\ru}(r)   \,\, \kappa (v) \,\, G^x_{s,r} \tilde \nN(dr,dv),  
   \end{align} 
  is  a local martingale, but the next lemma shows that under our conditions  it is a true martingale.

\begin{lemma}  \label{unif-martingale}  Suppose Assumptions  \ref{X-assumptions} and \ref{Ypath-assumptions} hold, and that there  exist constants   $a, b \geq 1,$   (possibly depending on $M$)  such that for a.a.~$(t, x)  \in [0,T]\times \R_0$,
    $$ |Z^M_t|  \le a +  b  |X^t|^{\tr}_\infty  \quad \text{ and }  \quad |U^M_t(x)| \le \kappa_{\rho}(x)(a+ b |X^t|^{\tr}_\infty ). $$ 
   Then, the process  $(G^x_{s,t}) _{t\in [s,T]}$  given in \eqref{the-process-G} is a  uniformly integrable martingale and we have for the solution to \eqref{U-pi-M-ep-repr} the representation
  \begin{align*} & Z^M_t  \one_{x = 0} +  U^M_t(x) \one_{x \neq 0}  \notag \\
  &=\E^{x\,'}_t  \Big [ e^{\int_t^T \wf^x_{\ry} (r) dr}D_{t,x} g^M(X) + \int_t^T e^{\int_t^v \wf^x_{\ry} (r) dr}  \wf^x(v,  D_{t,x} X^v) dv \Big ],
  \end{align*}
  where $d {\PP}^{x \, '} =  G^x_{t,T}  d\PP.$ 
  \end{lemma}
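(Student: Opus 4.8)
The plan is to first prove that the non-negative local martingale $(G^x_{s,t})_{t\in[s,T]}$ defined in \eqref{the-process-G} is a uniformly integrable martingale, and then to extract the representation by a Girsanov change of measure combined with an integrating factor that removes the $\wf^x_{\ry}$--term.

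Non-negativity of $G^x$ is clear: its jumps equal $G^x_{s,r-}\,\wf^x_{\ru}(r)\wh^x_{\ru}(r)\kappa(v)$, and since $\kappa(v)=1\wedge|v|\in[0,1]$ while $\wf^x_{\ru}\wh^x_{\ru}\ge-1$ by \eqref{fuhugeminus1} (the truncation contributes only factors in $[0,1]$ and preserves this sign bound), one has $1+\wf^x_{\ru}\wh^x_{\ru}\kappa(v)\ge0$. Hence $G^x$ is a non-negative supermartingale and it suffices to show $\E[G^x_{t,T}]=1$. For this I would verify a Novikov/L\'epingle--M\'emin--type criterion: with $\psi(r,v):=\wf^x_{\ru}(r)\wh^x_{\ru}(r)\kappa(v)$, it is enough that
\[
\E\exp\Big(\tfrac12\int_t^T(\wf^x_{\rz}(r))^2\,dr+\int_t^T\!\!\int_{\R_0}\big[(1+\psi)\log(1+\psi)-\psi\big]\,\nu(dv)\,dr\Big)<\infty .
\]

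The core of the argument is to bound this exponent. By item \eqref{f-estimate} of Lemma \ref{coefficients-path} both $|\wf^x_{\rz}(r)|$ and $|\wf^x_{\ru}(r)\wh^x_{\ru}(r)|$ are dominated by $\Phi_r:=C(1+a^\ell+b^\ell|X^r|_\infty^{\tr\ell})$, which is non-decreasing in $r$, so $\Phi_r\le\Phi_T$. Using that $w\mapsto w\log w-w+1$ is non-negative, convex and satisfies $w\log w-w+1\le(w-1)^2$ for $w\ge\tfrac12$, I would split the inner integral: on $\{|v|\le\tfrac12\}$ one has $1+\psi\ge\tfrac12$ and hence the bracket is $\le\psi^2\le\Phi_r^2\kappa(v)^2$, which integrates to a constant multiple of $\Phi_r^2$ because $\kappa\in L^2(\nu)$; on $\{|v|>\tfrac12\}$, a set of finite $\nu$--measure, the bound $-1\le\psi\le\Phi_r$ keeps the bracket $\le C(1+\Phi_r^2)$. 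Altogether the exponent is at most $CT(1+\Phi_T^2)\le C'(1+|X|_\infty^{2\tr\ell})$, and the standing constraint ${\tt r}\le\frac1{2\ell}$ gives $2\tr\ell\le1$, whence it is at most $C''(1+|X|_\infty)$. Lemma \ref{exponential-bound} then supplies the finiteness of the exponential moment, so $G^x$ is a uniformly integrable martingale and $d\PP^{x\,'}=G^x_{t,T}\,d\PP$ is a probability measure.

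For the representation I would then invoke Girsanov: under $\PP^{x\,'}$ the processes $W_\cdot-\int_t^\cdot\wf^x_{\rz}\,dr$ and $\tilde\nN(dr,dv)-\wf^x_{\ru}(r)\wh^x_{\ru}(r)\kappa(v)\,\nu(dv)\,dr$ become a Brownian motion and a compensated jump measure, and these are exactly the integrators in \eqref{U-pi-M-ep-repr}. Writing $P_u:=D_{t,x}Y^M_u$ and applying the product rule to $e^{\int_t^u\wf^x_{\ry}(r)dr}P_u$, the linear drift $\wf^x_{\ry}P_u$ cancels, so integration from $t$ to $T$ leaves $P_t=e^{\int_t^T\wf^x_{\ry}}D_{t,x}g^M(X)+\int_t^T e^{\int_t^u\wf^x_{\ry}}\wf^x(u,D_{t,x}X^u)\,du$ minus $\PP^{x\,'}$--stochastic integrals of $e^{\int}D_{t,x}Z^M$ and $e^{\int}D_{t,x}U^M$. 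Taking $\E^{x\,'}_t$ eliminates those martingale terms and, since $Z^M_t\one_{x=0}+U^M_t(x)\one_{x\neq0}=\lim_{u\downarrow t}P_u$ by \eqref{ZandUlimDY}, yields the claimed formula. The delicate points, besides the exponential estimate, are justifying that these are genuine $\PP^{x\,'}$--martingales (so that they vanish under $\E^{x\,'}_t$): I would derive an $L^{1+\delta}$ bound for $G^x_{t,T}$ from the stronger exponential moment obtained above and combine it by H\"older with the polynomial growth of $D_{t,x}Z^M$ and $D_{t,x}U^M$ provided by item \eqref{derivative-estimate} of Lemma \ref{coefficients-path}. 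I expect this verification of the exponential (L\'epingle--M\'emin) condition, in particular the control of the jump bracket near $\psi=-1$, to be the main obstacle.
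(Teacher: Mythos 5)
Your proposal is correct and takes essentially the same route as the paper: both arguments reduce the uniform integrability of $G^x$ to the exponential moment bound of Lemma \ref{exponential-bound}, using the coefficient estimates of Lemma \ref{coefficients-path} together with the constraint $2{\tt r}\ell\le 1$ to bound the exponent by $C(1+|X|_\infty)$, and both obtain the representation from the standard linear-BSDE/Girsanov argument with the integrating factor $e^{\int_t^\cdot \wf^x_{\ry}}$ (the paper invokes Sokol's Novikov-type criterion and cites the proof of Quenez--Sulem, whereas you use the L\'epingle--M\'emin form and write the computation out explicitly). This difference in the choice of exponential-martingale criterion is cosmetic; both are sufficient given the same underlying estimate.
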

  
  \begin{proof}   Since $(G^x_{s,t} )_{t\in [s,T]}$ is a locally square integrable martingale, by Novikov's condition given by \cite[Theorem 2.4]{Sokol} it suffices to check if
  $$\E \exp \left (\frac{1}{2} \int_s^T |\wf_{\rz}^x(u) |^2du+  \int_s^T   |\wf_{\ru}^x(u)|^2   \,\,\int_{\R_0} | \wh^x_{\ru}|^2 \kappa^2 (v) \nu(dv) du  \right )<\infty.$$
  This follows from Lemma \ref{exponential-bound} since by Lemma \ref{coefficients-path} we have
  \begin{align*}
   &\int_s^T \left ( \frac{1}{2}|\wf_{\rz}^x(u) |^2+  |\wf_{\ru}^x(u)|^2  \,\,\int_{\R_0} | \wh^x_{\ru}|^2 \kappa^2 (v) \nu(dv)\right ) du \\
   & \le   \int_s^T C^2 (1+a^\ell +  b^\ell |X^u|_\infty^{{\tt r}\ell} )^2 + C^2  (1+ a^\ell +   b^\ell|X^u|_\infty^{{\tt r} \ell})^2du \\
  & \le \int_s^T C'(1+ |X^u|_\infty) du
  \end{align*} 
  (note that by assumption we have $2 \tr \ell \le 1$   and  $ \kappa \in L^2(\nu)$). The second part of the Lemma follows from the proof of \cite[Theorem 3.4]{QuenezSulem}.
   It holds  $\Gamma^x_{s,t}  =  e^{\int_s^t  \wf^x_{\ry}(r)  dr }  G^x_{s,t}$ (see \eqref{the-process-G} and \eqref{the-process-Gamma}).
  \end{proof}

   Before stating the next result let us prove an auxiliary Lemma.
   \begin{lemma}\label{lem_cond_est_X_pi}
    Suppose Assumptions  \ref{X-assumptions} and \ref{Ypath-assumptions} hold, and that there exists constants  $a, b \geq 1,$  (possibly depending on $M$)  such that for a.a.~$(t, x)  \in [0,T]\times \R_0$, 
    $$ |Z^M_t|  \le a +  b |X^t|^{\tr}_\infty  \quad \text{ and }  \quad |U^M_t(x)| \le \kappa_{\rho}(x)(a+ b |X^t|^{\tr}_\infty ). $$
    For all $0\le s \le t \le T$ we have
    \begin{align*}
      \E^{x\, '}_s |X^t|^{\tt r}_\infty&\le C(1+a^{{\tt r }\ell}+b+|X^s|^{\tt r}_\infty).
    \end{align*}
  \end{lemma}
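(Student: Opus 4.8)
The plan is to pass to the measure $\PP^{x\,'}$ via Girsanov's theorem and to estimate the conditional expectation through the transformed dynamics of $X$. Since $(G^x_{s,u})_{u\in[s,T]}$ is, by Lemma \ref{unif-martingale}, a uniformly integrable martingale with $\E_s G^x_{s,T}=1$, and since $|X^t|^{\tr}_\infty$ is $\fF_t$-measurable, the multiplicative property of the Dol\'eans-Dade exponential and the tower rule give $\E^{x\,'}_s|X^t|^{\tr}_\infty=\E_s[|X^t|^{\tr}_\infty\,G^x_{s,t}]$, so only the density on $[s,t]$ matters. Reading off the Girsanov drift from \eqref{the-process-G}, under $\PP^{x\,'}$ the process $W^{\PP^{x\,'}}_u:=W_u-\int_0^u \wf^x_{\rz}(r)\,dr$ is a Brownian motion and $\nN$ acquires the (non-negative, by Assumption \ref{Ypath-assumptions}\eqref{fuhugeminus1} and $\kappa\le 1$) compensator $(1+\wf^x_{\ru}(r)\wh^x_{\ru}(r)\kappa(v))\,\nu(dv)\,dr$. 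Writing $\tilde\nN^{\PP^{x\,'}}$ for the $\PP^{x\,'}$-compensated measure, one obtains for $u\in[s,t]$
\[
X_u-X_s=\int_s^u \tilde b(r)\,dr+\int_s^u \sigma(r)\,dW^{\PP^{x\,'}}_r+\int_{]s,u]\times\R_0}\rho(r,v)\,\tilde\nN^{\PP^{x\,'}}(dr,dv),
\]
where $\tilde b(r)=b(r,X^r)+\sigma(r)\wf^x_{\rz}(r)+\int_{\R_0}\rho(r,v)\wf^x_{\ru}(r)\wh^x_{\ru}(r)\kappa(v)\,\nu(dv)$.

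Next I would use $|X^t|_\infty\le|X^s|_\infty+\sup_{s\le u\le t}|X_u-X_s|$ together with subadditivity of $y\mapsto y^{\tr}$ (valid as $0\le\tr\le 1$) and the concavity bound $\E^{x\,'}_s(\,\cdot\,)^{\tr}\le(\E^{x\,'}_s(\,\cdot\,))^{\tr}$ to reduce the claim to the first moment $\E^{x\,'}_s\sup_{s\le u\le t}|X_u-X_s|$. For the drift, Assumption \ref{X-assumptions}\eqref{ass:sdeb} and Lemma \ref{coefficients-path} give $|\tilde b(r)|\le K_b+L_b|X^r|_\infty+C(1+a^\ell+b^\ell|X^r|^{\tr\ell}_\infty)$, the $\nu$-integral being finite by Cauchy--Schwarz since $\kappa_\rho,\kappa\in L^2(\nu)$. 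The crucial step is Young's inequality with conjugate exponents $1/(\tr\ell)$ and $1/(1-\tr\ell)$ (admissible because $\tr\ell\le\tfrac12<1$), which splits $b^\ell|X^r|^{\tr\ell}_\infty\le\tr\ell\,|X^r|_\infty+(1-\tr\ell)\,b^{\ell/(1-\tr\ell)}$, converting the mixed term into a purely linear term in $|X^r|_\infty$ plus a pure power of $b$, so that $|\tilde b(r)|\le\tilde L\,|X^r|_\infty+C(1+a^\ell+b^{\ell/(1-\tr\ell)})$.

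The two stochastic integrals are handled by the Burkholder--Davis--Gundy inequality under $\PP^{x\,'}$: the Brownian part contributes only the constant $CK_\sigma\sqrt{T}$, while the jump part is bounded by $C(\E^{x\,'}_s\int_s^t\int_{\R_0}\rho^2(1+\wf^x_{\ru}\wh^x_{\ru}\kappa)\,\nu(dv)\,dr)^{1/2}$, where I would again insert $|\wf^x_{\ru}\wh^x_{\ru}|\le C(1+a^\ell+b^\ell|X^r|^{\tr\ell}_\infty)$, Young-split as above, and linearise the square root via $(\,\cdot\,)^{1/2}\le\tfrac12\varepsilon(\,\cdot\,)+\tfrac{1}{2\varepsilon}$. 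Substituting $|X^r|_\infty\le|X^s|_\infty+\sup_{s\le w\le r}|X_w-X_s|$ throughout yields an integral inequality of Gronwall type for $\phi(u):=\E^{x\,'}_s\sup_{s\le w\le u}|X_w-X_s|$, giving $\phi(t)\le C(1+a^\ell+b^{\ell/(1-\tr\ell)}+|X^s|_\infty)$. Taking the $\tr$-th power and using subadditivity then produces the asserted bound $C(1+a^{\tr\ell}+b+|X^s|^{\tr}_\infty)$, since $b^{\tr\ell/(1-\tr\ell)}\le b$ precisely because $\tr\ell/(1-\tr\ell)\le 1\iff\tr\le\frac{1}{2\ell}$, and the drift's $a^\ell$ dominates the jump martingale's $a^{\ell/2}$.

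The main obstacle I anticipate is the feedback created by the genuinely linear drift term $L_b|X^r|_\infty$, which rules out a one-shot estimate and forces the Gronwall/Bihari argument, combined with carrying the Young splitting consistently through the square root coming from BDG on the jump martingale. Keeping the exponents of $a$ and $b$ sharp—so that the final bound is additive and the power of $b$ is exactly $1$ at the critical value $\tr=\frac{1}{2\ell}$—is the delicate bookkeeping at the heart of the argument.
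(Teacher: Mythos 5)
Your overall strategy coincides with the paper's: identify the dynamics of $X$ under $\PP^{x\,'}$ via the Girsanov drift read off from \eqref{the-process-G}, bound the extra drift and jump-compensator terms by Lemma \ref{coefficients-path}, exploit $\tr\ell\le\tfrac12$ to split $b^\ell|X^r|_\infty^{\tr\ell}$ into a linear term in $|X^r|_\infty$ plus a pure power of $b$ not exceeding $b^{1/\tr}$ after the final $\tr$-th power, and close with a Gronwall-type argument. Your exponent bookkeeping (Young with exponents $1/(\tr\ell)$ and $1/(1-\tr\ell)$, and $b^{\tr\ell/(1-\tr\ell)}\le b$ exactly when $\tr\le\tfrac{1}{2\ell}$) is correct and equivalent to the paper's cruder splitting $b^\ell|X^u|_\infty^{\tr\ell}\le 2b^{2\ell}+|X^u|_\infty$ followed by $b^{2\tr\ell}\le b$. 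Where you genuinely diverge is the Gronwall step: you work with the first moment $\phi(u)=\E^{x\,'}_s\sup_{s\le w\le u}|X_w-X_s|$ via conditional BDG, Jensen for $y\mapsto y^{\tr}$, and the classical Gronwall lemma, whereas the paper applies It\^o's formula to $(X_t-X_s)^2$ and invokes the stochastic Bihari--LaSalle inequality \eqref{stochGronwall} of Theorem \ref{Sarah2} with the fractional power $p=\tr/2$, which directly yields $\E^{x\,'}_s\sup_{s\le u\le t}|X_u-X_s|^{\tr}$ from an inequality involving only a \emph{local} martingale.

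This difference is not cosmetic: it is the one place where your argument has a gap. The classical Gronwall lemma applied to $\phi$ requires knowing a priori that $\phi(u)<\infty$ for a.e.\ $\omega$ (otherwise $\phi\equiv+\infty$ satisfies the integral inequality vacuously), and the conditional BDG step likewise presupposes integrability of the relevant suprema under $\PP^{x\,'}$. Under $\PP$ this is supplied by Lemma \ref{exponential-bound}, but under $\PP^{x\,'}$ one must additionally control $\E\bigl[G^x_{s,T}\sup_{s\le w\le t}|X_w-X_s|\bigr]$, e.g.\ by showing $G^x_{s,T}\in L^q$ for some $q>1$ (which does follow here, since all exponential moments of $|X|_\infty$ make the Novikov-type quantity in Lemma \ref{unif-martingale} integrable after multiplication by any constant, but this needs to be said). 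The paper's choice of the stochastic Gronwall inequality with $p<1$ is made precisely to sidestep this issue: Theorem \ref{Sarah2} tolerates a genuine local martingale term and produces the $p$-th moment bound without any prior integrability of the supremum. If you add the $L^q$-integrability of the density (or switch to the $p<1$ stochastic Gronwall route), your proof is complete.
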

  
  \begin{proof}Since
    \begin{align*} 
    \E^{x\, '}_s|X^t|_\infty^{\tt r}\le |X^s |_\infty^{\tt r} +   \E^{x\, '}_s|X^t -X^s|_\infty^{\tt r},
    \end{align*}
  recalling $\tr <1$, it suffices to  estimate the term 
   $$   \E^{x\, '}_s|X^t -X^s|_\infty^{\tt r} =   \E^{x\, '}_s   \sup_{s\le u\le t} |X_u-X_s|^{\tt r}.$$
     The process  $X$   for $t \in [s,T]$ is given by
       \begin{align*}
       X_t  &= X_s + \int_s^t b(u,X_u) du +  \int_s^t \sigma(u) dW'_u  + \int_s^t  \sigma(u)  \wf^x_\rz (u) du \\
       & \quad + \int_{{]s,t]}\times{\R_0}}  \rho(s,v)   \tilde \nN'(du,dv)  +    \int_{{]s,t]}\times{\R_0}}   \rho(s,v)    \wf^x_\ru(u) \wh^x_\ru  (u) \,\, \kappa (v) \,\,du \nu(dv).  
       \end{align*}

     By It\^o's formula it holds for $0 \le s < t \le T$ that
       \begin{align*} 
     & (X_t-X_s)^2  \\ &= 2 \int_s^t  (X_u-X_s)( b(u,X_u) +  \sigma(u)  \wf^x_\rz(u) ) du    +  2 \int_s^t  (X_u-X_s)\sigma(u) dW'_u   \\ 
     & \quad +  \int_s^t  \sigma^2(u) du \\
       & \quad +  \int_{{]s,t]}\times{\R_0}} 2(X_u -X_s) \rho(s,v) +  \rho(s,v)^2  \tilde \nN'(du,dv)  \\ 
       & \quad +  2   \int_{{]s,t]}\times{\R_0}}  (X_u -X_s) \rho(s,v)    \wf^x_u \wh^x_u  \,\, \kappa (v) \,\,du \nu(dv)  \\ 
       &\quad+  \int_{{]s,t]}\times{\R_0}}  \rho(s,v) ^2\,\,du \nu(dv).
        \end{align*}
  To shorten the notation we will address the sum of the stochastic integrals on the r.h.s.~as  'local martingale'.  The two non-random terms are bounded by  $( K^2_\sigma+\kappa_{\rho, 2})(T-s).$
Using  the bounds for $|\wf^x_\rz (u) |$   and $ | \wf^x_\ru(u) \wh^x_\ru(u)|$ given in    Lemma \ref{coefficients-path}  we continue to estimate the remaining integrands as follows:
  \begin{align*}
&   | (X_u-X_s)( b(u,X_u) +  \sigma(u)  \wf^x_\rz (u))  |   \\& \quad + \left | \int_{{\R_0}}  (X_u -X_s) \rho(s,v)    \wf^x_\ru(u) \wh^x_\ru(u)  \, \kappa (v)  \nu(dv)  \right | \\
& \le C |X_u-X_s|  (|X_u|+1+  a^{\ell} +  b^{\ell} |X^u|_\infty^{{\tt r}\ell}),
  \end{align*}
where $C$ depends on $L_b, K_\sigma, K_b,  \kappa_{\rho,2} $  and $\kappa_{\rho,\infty}.$ Since ${\tt r}\ell\le \frac{1}{2}$ we may  increase the exponent of $|X^u|_\infty^{{\tt r}\ell}$  to $\frac{1}{2}$ if we add $1$ and then  we apply the Cauchy-Schwarz inequality to get
$$  b^{\ell} |X^u|_\infty^{{\tt r}\ell} \le  b^{\ell} (1+  |X^u|_\infty^ \frac{1}{2}) \le b^{\ell} + \frac{b^{2\ell}}{2} + \frac{ |X^u|_\infty}{2} \le  2b^{2\ell}  +  |X^u|_\infty. $$
This gives
 \begin{align*} 
&|X_u-X_s|  (|X_u|+1+  a^{\ell} +  b^{\ell} |X^u|_\infty^{{\tt r}\ell}) \\
& \le  |X^u-X^s|_\infty (2 |X^u|_\infty +1+  a^{\ell} +  2b^{2\ell} )  \\
& \le  |X^u-X^s|_\infty (2 |X^u-X^s|_\infty  +  2 |X^s|_\infty+1+  a^{\ell} +  2b^{2\ell})  \\
&\le 3 |X^u-X^s|_\infty^2 +  (|X^s|_\infty+1+  a^{\ell} +  b^{2\ell})^2.
 \end{align*}
 Summarising, we have
  \begin{align*}
    (X_t-X_s)^2  \le    6 C \int_s^t  \sup_{s\le v\le u} |X_v-X_s|^2  du   + H  + \,\, \text{ local martingale}
     \end{align*} 
 where 
 \begin{align*} 
 H &= \left (K^2_\sigma  +\kappa_{\rho, 2}   + C (|X^s|_\infty+1+  a^{\ell} +  b^{2\ell})^2 \right )T.
   \end{align*}
 Then, by   relation \eqref{stochGronwall}  of Theorem \ref{Sarah2}  (for $0< p= \frac{\tt r}{2}<1$ )
 \begin{align*}
  \E^{x\, '}_s\sup_{s \le u\le t}|X_u-X_s |^{\tt r} &\le   \frac{1}{(1-p) }  H^{{\tt r}/2} \,\,   e^{\frac{p}{1-p}  6C (T- s)}\\
  &\le C'(1+a^{{\tt r }\ell}+b+|X^s|^{\tt r}_\infty),
  \end{align*}      
  where  we used that $b^{2{\tt r} \ell} \le b.$ The constant $C'$ depends on $L_b, K_\sigma, K_b,  \kappa_{\rho,2}, \kappa_{\rho,\infty}.$ and  ${\tt r}$.
\end{proof}

Lemma \ref{estimates-depending-on-M} presented below gives us some bounds on $Z^M$ and $U^M$ that cannot be used directly to prove the existence of a solution for \eqref{fbsde} since they strongly depend on $M$ through the constants $a=a_M$ and $b=b_M$. It is however  an important  step in the proof of Proposition \ref{bounds-independend-from-M}.
   
   \begin{lem} \label{iterated-estimate}\label{estimates-depending-on-M}  Suppose Assumptions  \ref{X-assumptions} and \ref{Ypath-assumptions} hold.  Then the following holds for a solution $(Y^M, Z^M, U^M)$ to \eqref{BSDE-truncated}: Assume that there  exist constants ${a, b} \geq 1$ which may depend on $M $  such that for a.a.  
     $(t, x)  \in [0,T]\times \R_0$, 
    $$ |Z^M_t|  \le a +  b  |X^t|^{\tr}_\infty  \quad \text{and} \quad  |U^M_t(x)| \le \kappa_{\rho}(x)(a+ b  |X^t|^{\tr}_\infty ).$$
    Then  there is a $C>0$ which does not depend on $a$, $b$, $M$ such that
     $$ |Z^M_t | \le  C(( 1+ a^{r \ell}+b) +  | X^t |^{\tt r}_\infty  )  $$
    and  
      $$ |U^M_t(x)| \le  C \kappa_\rho(x)   (( 1+ a^{r \ell}+b) + | X^t |^{\tt r}_\infty  ). $$
    
     \end{lem}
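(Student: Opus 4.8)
The plan is to read off $Z^M$ and $U^M$ from the representation obtained in Lemma~\ref{unif-martingale},
\[Z^M_t \one_{x=0} + U^M_t(x)\one_{x\neq 0} = \E^{x\,'}_t\Big[e^{\int_t^T \wf^x_{\ry}(r)\,dr}\,D_{t,x}g^M(X) + \int_t^T e^{\int_t^v \wf^x_{\ry}(r)\,dr}\,\wf^x(v, D_{t,x}X^v)\,dv\Big],\]
and to estimate the right-hand side under the measure $d\PP^{x\,'} = G^x_{t,T}\,d\PP$, which is a genuine probability measure since Lemma~\ref{unif-martingale} already shows $G^x$ to be a uniformly integrable martingale. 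Because $|\wf^x_{\ry}|\le L_{f,\ry}$ by Lemma~\ref{coefficients-path}\eqref{f-estimate}, every exponential weight is bounded by $e^{L_{f,\ry}T}$, so after taking moduli it suffices to control $\E^{x\,'}_t|D_{t,x}g^M(X)|$ and $\E^{x\,'}_t\int_t^T|\wf^x(v, D_{t,x}X^v)|\,dv$.

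Next I would obtain pointwise, $M$-independent bounds on these two ingredients. The key point is that the truncated data keep the local Lipschitz constants of the original data with \emph{no} dependence on $M$: since $b_M$ is $1$-Lipschitz and $|b_M(\bx)|_\infty\le|\bx|_\infty$, the map $g^M = g\circ b_M$ obeys the inequality of Assumption~\ref{Ypath-assumptions}\eqref{the-g} with the very same $c,\alpha$, and likewise $f^M$ obeys the path inequality with the same $c,\beta$. Feeding this into Corollary~\ref{malliavin-derivative-explicite} together with the derivative bound $|D_{t,x}X^v|_\infty\le e^{L_bT}\kappa_\rho(x)$ from \eqref{upperbound-derivative-X} (and $e^{L_bT}K_\sigma$ in the direction $x=0$), I obtain for $x\neq 0$
\[|D_{t,x}g^M(X)|\le C\kappa_\rho(x)\big(1+|X^T|^{\tr}_\infty\big), \qquad |\wf^x(v, D_{t,x}X^v)|\le C\kappa_\rho(x)\big(1+|X^v|^{\tr}_\infty\big),\]
with $C$ depending only on $c,\alpha,\beta,L_b,T,\tr$ and $\kappa_{\rho,\infty}$; these are exactly the estimates of Lemma~\ref{coefficients-path}\eqref{f-estimate} with the factor $\kappa_\rho(x)$ retained rather than absorbed into the constant, and in the direction $x=0$ the same bounds hold with $\kappa_\rho(x)$ replaced by $K_\sigma$ (via \eqref{Df-estimate}).

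Finally, since $|X^v|^{\tr}_\infty\le|X^T|^{\tr}_\infty$ for $v\le T$, both contributions collapse to $C\kappa_\rho(x)\,\E^{x\,'}_t\big[1+|X^T|^{\tr}_\infty\big]$ (respectively $C\,\E^{0\,'}_t\big[1+|X^T|^{\tr}_\infty\big]$ for $x=0$), and I would close the argument by invoking Lemma~\ref{lem_cond_est_X_pi}, which gives $\E^{x\,'}_t|X^T|^{\tr}_\infty\le C(1+a^{\tr\ell}+b+|X^t|^{\tr}_\infty)$; this yields both asserted inequalities at once. The hard part is purely one of bookkeeping: one must check that every constant is independent of $a$, $b$ and $M$. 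The $M$-independence rests entirely on the non-expansiveness of $b_M$ used above, while all dependence on the \emph{a priori} constants $a,b$ is confined to the single use of Lemma~\ref{lem_cond_est_X_pi}; in particular the coefficient of $|X^t|^{\tr}_\infty$ in the conclusion is the universal constant $C$, not $b$. This upgrading of that coefficient from $b$ to a fixed $C$ is precisely what makes the estimate suitable for the iteration performed in Proposition~\ref{bounds-independend-from-M}.
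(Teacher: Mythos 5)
Your proposal is correct and follows essentially the same route as the paper: the representation from Lemma \ref{unif-martingale}, the uniform bound $|\wf^x_{\ry}|\le L_{f,\ry}$ on the exponential weights, the $M$-independent local Lipschitz estimates for $D_{t,x}g^M(X)$ and $\wf^x(v,D_{t,x}X^v)$ via Proposition \ref{bounds-for-DX} and Lemma \ref{coefficients-path}, and finally Lemma \ref{lem_cond_est_X_pi} to convert $\E^{x\,'}_t|X|^{\tr}_\infty$ into $C(1+a^{\tr\ell}+b+|X^t|^{\tr}_\infty)$. Your explicit remark that the $1$-Lipschitz, non-expansive nature of $b_M$ is what keeps the constants free of $M$ is exactly the point the paper uses implicitly, and your treatment of the $x=0$ direction parallels the $x\neq 0$ case the paper writes out.
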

    
    \begin{proof} 
    We start by proving the upper bound for $|U^M_t(x)|$ by using Lemma \ref{unif-martingale}.
    
    From  Assumption \ref{Ypath-assumptions} and  Proposition \ref{bounds-for-DX}
    we derive that  for $x \neq 0$
    \begin{align} \label{U-estimate-termA}
    & \Big |  \E^{x\, '}_t \Big [ e^{\int_t^T \wf^x_y (r) dr}D_{t,x} g^M(X)  \Big ] \Big |  \notag \\
    & \le  e^{TL_{f,\ry}}   \E^{x\, '}_t \Big [ \left (c+  \alpha (|X|_\infty^{\tt r} + |D_{t,x} X|_\infty^{\tt r}) \right )|D_{t,x} X|_\infty \Big ]  \notag \\
     & \le  e^{TL_{f,\ry}}  T e^{L_b T} \kappa_\rho(x)   \E^{x\, '}_t\left (c+  \alpha (|X|_\infty^{\tt r} + | e^{L_b T} \kappa_\rho(x) |^{\tt r}) \right ) \notag\\
    &\le \kappa_\rho(x)C ( 1 +  \E^{x\, '}_t|X|_\infty^{\tt r} ).
     \end{align}

     Hence from \eqref{U-estimate-termA} and Lemma \ref{lem_cond_est_X_pi} we get that there is a $C>0$  such that we have
    \equa
     && \Big |  \E^{x\, '}_t \Big [ e^{\int_t^T \wf^x_y(r)  dr}D_{t,x} g^M(X) \Big ] \Big |  \le C \kappa_\rho(x)  ( 1+ a^{r \ell} +b + | X^t|_\infty^{\tt r}),
     \tion
     
    Applying  Lemma \ref{coefficients-path}   we get as in \eqref{U-estimate-termA} that
     \begin{align*}  &\Big |  \E^{x\, '}_t \Big [ \int_t^T e^{\int_t^v \wf^x_{\ry}(r)  dr} \wf^x(v, D_{t,x}X^v) dv  \Big ]  \Big |\\
     & \le C \int_t^T  \E^{x\, '}_t \Big [ (1 +e^{L_b \tt r T} \kappa_{\rho}(x)^{\tt r}  +| X^v  |_\infty^{\tt r}) \, e^{L_bT}\kappa_{\rho}(x) \,\,  \Big ]  dv .
     \end{align*}
     
   By  Lemma \ref{lem_cond_est_X_pi} we have
     \begin{align*}
      &\Big |  \E^{x\, '}_t \Big [ \int_t^T e^{\int_t^v \wf^x_{\ry}(r)  dr} \wf^x(v, D_{t,x}X^v) dv  \Big ]  \Big |\le C \kappa_\rho(x)  ( 1+ a^{r\ell} + b + | X^t |_\infty^{\tt r}).
    \end{align*}
     
    \end{proof}
    
    We have now all the tools needed in order to prove Proposition \ref{bounds-independend-from-M}.
    
    \begin{proof}[Proof of Proposition \ref{bounds-independend-from-M}]   \label{proof-of-first-main} Let us start by proving \eqref{Z-and-U-pi-bound}. By Lemma  \ref{first-estimates-depending-on-M-pi-eps}   
    there exists an $a_0>0$ potentially depending on $M$ such that, for a.a.  $(t,x) \in [0,T] \times \R$,
      \begin{align}\label{step0}
    &|Z^M_t|  \le a_0,\notag\\
    &|U^M_t(x)| \le a_0 \kappa_{\rho}(x) .
      \end{align}
     By Lemma \ref{iterated-estimate} we have
      $$ |Z^M_t | \le  C( 1+ a_0^{r \ell}) + C| X^t |_\infty^{\tt r}  $$
      $$ |U^M_t(x)| \le   \kappa_\rho(x)   (C( 1+ a_0^{r \ell}) + C| X^t |_\infty^{\tt r}  ), $$
      where $C$ does not depend on $a_0$ and $M$.
      Let $a_1:=C(1+ a_0^{r \ell}).$ Applying again Lemma \ref{iterated-estimate} gives
      $$ |Z^M_t | \le  C( 1+ a_1^{{r \ell}} +C)) + C| X^{\pi,t} |_\infty^{\tt r} \le (C+1)^2(1+a_1^{r \ell})+ C| X^{\pi,t} |_\infty^{\tt r} $$
      $$ |U^M_t(x)| \le   C\kappa_\rho(x)   (( 1+ a_1^{r \ell}+C) + | X^{\pi,t} |_\infty^{\tt r}  )\le \kappa_\rho(x) ((C+1)^2(1+a_1^{r \ell})+ C| X^{\pi,t} |_\infty^{\tt r}). $$
    Let $a_{n+1}:= (C+1)^2(1+a_n^{rl})$. Lemma \ref{iterated-estimate} gives for all $n$
    $$ |Z^M_t | \le (C+1)^2(1+a_{n+1}^{r\ell})+ C| X^{\pi,t} |_\infty^{\tt r} $$
      $$ |U^M_t(x)|\le \kappa_\rho(x) ((C+1)^2(1+a_{n+1}^{r \ell})+ C| X^{\pi,t} |_\infty^{\tt r}). $$
      We can remark that $a_{\infty}:=\lim_{n \rightarrow +\infty}a_n$ exists since $r\ell<1$, and it depends only on $C$ and $r\ell$. Then we just have to set  $b_{\infty}:=C$ in order to get the announced result.\\
 We now show \eqref{Y-pi-bound}. Since 
  \begin{align*}
  Y^M_t &= \E_t g^M(X) + \E_t   \int_t^T  f^M\left( s, X^s, \Theta^M_s \right)ds
\end{align*}
where  $\Theta^M_s:= (Y^M_s, Z^M_s,  H^M_s )$, we get  from \eqref{Z-and-U-pi-bound} and Assumption \ref{Ypath-assumptions}
 \begin{align*}
  |Y^M_t |&\le | g^M(0)| +   \E_t |g^M(X) -g^M(0)|  \\
  & \quad +   \int_t^T | f^M\left( s, 0,0,0,0) \right)|ds  \\
  &\quad+\E_t   \int_t^T | f^M\left( s, X^s, \Theta^M_s \right)-f^M \left( s, 0,0,0,0) \right)| ds  \\
  &\le C( g(0))  +  C \E_t (1+  |X|_\infty^{{\tt r} +1} ) \\
  & \quad + C \int_t^T |f(s,0,0,0,0)|ds +\E_t   \int_t^T  \big [C(1+  |X^t|_\infty^{{\tt r} +1} )  + L_{f, \ry} |Y^M_s | +  \\
   & \quad + C(1+  |Z^M_s |^{l+1} )  + C(1+    | H^M_s |^{m_1+1} ) ]ds \\
   &\le C + C \E_t  |X|_\infty^{{\tt r} +1} +  C  \int_t^T  \E_t   |X^s|_\infty^{{\tt r} +1}  ds + L_{f, \ry} \E_t   \int_t^T   |Y^M_s | ds \\
   &\le C + C  |X^t|_\infty^{{\tt r} +1}  + C\E_t  |X  - X^t  |_\infty^{{\tt r} +1}      \\
   & \quad + C  \int_t^T  \E_t   |X^s  - X^t   |_\infty^{{\tt r} +1}  ds + L_{f, \ry} \E_t   \int_t^T   |Y^M_s | ds.
  \end{align*}
 
   Then, by similar computations as in the proof of Lemma \ref{lem_cond_est_X_pi} (here we use  $\E$ instead of $\E'$) one gets  that 
  $$|Y^M_t | \le C + C  |X^t|_\infty^{{\tt r} +1} +  L_{f, \ry} \E_t   \int_t^T   |Y^M_s | ds $$
  and the assertion follows by Gronwall's lemma. 
\end{proof}
       
\section{A priori estimate and proof of the main result}\label{sec: finish}

Before we show the main theorem, we prove a stability result well shaped for our setting.  We introduce the notation 
$$ f_h(s,\bx^s, y,z, {\bf u}) := f \left (s,\bx^s, y,z,  \int_{\R_0}  h(s,  {\bf u}(v)) \kappa (v)  \nu(dv) \right).  $$

  \begin{lem}\label{lem:apriori}
   Let $(\xi,f,h), (\xi',f',h')$ be data for BSDEs with $\xi,\xi'\in L^2$ and $f,f',h,h'$ satisfy Assumption \ref{Ypath-assumptions} ({\tt iii}-{\tt vi}). 
  Assume two respective solutions $(Y,Z,U), (Y',Z',U')$ exist 
    and are such that there are constants $a,b>0$
  $$  |Z_t |\vee |Z'_t| \le a + b |X^t|^{\tt r}_\infty  \quad\text{and}\quad  |U_t(x) | \vee|U'_t(x)| \le \kappa_{\rho}(x)(a + b |X^t|^{\tt r}_\infty ).$$
  Then there is a constant $C=C(a,b,c,T,L_{f,\ry},\gamma,{\tt r},l, \kappa_{\rho,\infty}, \|\kappa\|_{L^2(\nu)})>0$   such that for all $n\in \NN$,
  \begin{align}\label{apriori_est}
  &\EE|Y-Y'|_\infty^2 + \EE\int_0^T|Z_t- Z_t'|^2dt +\EE\int_0^T \|U_t-U_t'\|_{L^2(\nu)}^2dt\notag \\
  &\leq Ce^{Cn}\bigg(\EE|\xi-\xi'|^2+\EE\bigg(\int_0^T|f_h(t,Y_t,Z_t,U_t)-f'_{h'}(t,Y_t,Z_t,U_t)|dt\bigg)^2\bigg) \notag\\
  &\quad+e^{-n}C\EE \left[e^{(T+1)^2|X|_\infty)}\right].
   \end{align}
    Moreover, the difference of the generators may also be taken using the solution $(Y',Z',U')$.
  \end{lem}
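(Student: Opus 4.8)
The plan is to subtract the two equations and linearise the common generator along the difference $\delta Y:=Y-Y'$, $\delta Z:=Z-Z'$, $\delta U:=U-U'$. Writing $\delta\xi:=\xi-\xi'$ and
$$ \delta f_s := f_h(s,X^s,Y_s,Z_s,U_s)-f'_{h'}(s,X^s,Y_s,Z_s,U_s), $$
the driver of the difference BSDE splits as $\delta f_s$ plus the increment $f'_{h'}(s,X^s,Y_s,Z_s,U_s)-f'_{h'}(s,X^s,Y'_s,Z'_s,U'_s)$. By Assumption \ref{Ypath-assumptions}\,({\tt iii})--({\tt vi}) and the mean value theorem (as in \eqref{f-variables}), this increment is linear in $(\delta Y_s,\delta Z_s,\delta U_s)$ with progressively measurable coefficients $\alpha_s,\beta_s$ and a kernel $\gamma_s$ satisfying $|\alpha_s|\le L_{f,\ry}$ and, thanks to the assumed bounds on $(Z,Z',U,U')$ and the computations behind Lemma \ref{coefficients-path},
$$ |\beta_s|^2+\|\gamma_s\|_{L^2(\nu)}^2 \le C\big(1+|X^s|_\infty^{2\tr\ell}\big)\le C\big(1+|X|_\infty\big), $$
the last inequality being where the structural constraint $2\tr\ell\le 1$ enters decisively.

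With this linear form I would invoke the standard quadratic a priori estimate for L\'evy-driven BSDEs (\cite[Theorem 6.3]{GeissYlinen} together with \cite[Proposition 2.2]{bbp}) carrying the exponential weight $e^{A_t}$, where $A_t:=\int_0^t\lambda_s\,ds$ and $\lambda_s:=2L_{f,\ry}+2\beta_s^2+2\|\gamma_s\|_{L^2(\nu)}^2$. Applying It\^o's formula to $e^{A_t}|\delta Y_t|^2$, the drift generated by $dA$ is designed to cancel exactly the quadratic-in-$\delta Y$ terms coming from $\beta_s,\gamma_s$, once a Young/Cauchy--Schwarz step has absorbed $\tfrac12|\delta Z_s|^2+\tfrac12\|\delta U_s\|_{L^2(\nu)}^2$ into the left-hand side. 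Because $A_t\le CT(1+|X|_\infty)$, this controls
$$ \Phi:=|\delta Y|_\infty^2+\int_0^T|\delta Z_s|^2ds+\int_0^T\|\delta U_s\|_{L^2(\nu)}^2ds $$
by the terminal and generator data, but weighted by the \emph{random} factor $e^{A_T}$; turning that factor into the two deterministic regimes of \eqref{apriori_est} is the heart of the argument.

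To do so I would fix $n$ and split $\EE[\Phi]=\EE[\Phi\,\one_{\{A_T\le n\}}]+\EE[\Phi\,\one_{\{A_T>n\}}]$. On the region $\{A_T\le n\}$ all weights satisfy $1\le e^{A_s}\le e^{n}$, so the weighted estimate degenerates to a genuine Lipschitz stability estimate whose Gronwall constant is $e^{Cn}$, producing the first term $Ce^{Cn}\big(\EE|\delta\xi|^2+\EE(\int_0^T|\delta f_s|ds)^2\big)$; here I would localise along $\tau:=\inf\{t:A_t>n\}\wedge T$, using $\{A_T\le n\}=\{\tau=T\}$, to keep the stochastic integrals adapted while the coefficients are bounded. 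On the complementary region $\{A_T>n\}$ I would abandon the difference equation and bound $\Phi$ by the a priori size of the two solutions separately: the hypotheses on $(Z,U)$ and $(Z',U')$, the bound $|Y_t|\vee|Y'_t|\le C(1+|X^t|_\infty^{\tr+1})$ obtained exactly as in \eqref{Y-pi-bound}, and $\tr<1$ give $\Phi\le C(1+|X|_\infty^{2(\tr+1)})$; combined with the Markov-type bound $\one_{\{A_T>n\}}\le e^{-n}e^{A_T}$ and $A_T\le CT(1+|X|_\infty)$ this yields the error term $e^{-n}C\,\EE\!\left[e^{(T+1)^2|X|_\infty}\right]$, which is finite by Lemma \ref{exponential-bound}. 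The final assertion follows by symmetry: linearising the common generator around $(Y',Z',U')$ instead of $(Y,Z,U)$ reruns the identical computation with $\delta f$ evaluated at the primed solution.

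I expect the genuine obstacle to be precisely this reconciliation of a \emph{random}, path-dependent Lipschitz constant $\lambda_s\sim|X^s|_\infty^{2\tr\ell}$ with a deterministic stability estimate: no uniform bound on $\lambda_s$ is available, so one must weight the equation to neutralise the growing coefficients and then trade an $e^{Cn}$ blow-up on the controlled region against an exponentially small exceptional probability. This trade-off closes only because $2\tr\ell\le 1$ forces $A_T$ to grow at most linearly in $|X|_\infty$, and because $|X|_\infty$ possesses all exponential moments by Lemma \ref{exponential-bound}; the bookkeeping that keeps the localising stopping time compatible with the a priori bounds on the exceptional set is where the care is needed.
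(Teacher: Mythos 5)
Your proposal is correct and rests on the same three pillars as the paper's proof: the observation that the effective Lipschitz coefficient in $(z,u)$ is of order $1+|X^s|_\infty^{\tr\ell}$ so that its square grows at most linearly in $|X|_\infty$ (this is exactly where $2\tr\ell\le 1$ enters in both arguments), a split into a region where this coefficient is controlled by $n$ and an exceptional region of exponentially small weight, and Lemma \ref{exponential-bound} to pay for the exceptional region. The implementation in the middle differs. The paper never introduces a random weight: it inserts the indicators $\one_{Q_{n,s}}+\one_{Q^c_{n,s}}$ with $Q_{n,s}=\{T\sup_{u\le s}|X_u|>n\}$ directly into the Young-inequality step \eqref{Y-timesDiff-f}, replaces the coefficient by $1+n$ on $Q^c_{n,s}$ and by the a priori polynomial bound $|X^s|_\infty^{1+\tr}$ on $Q_{n,s}$, and then runs a purely deterministic Gronwall argument with constant $e^{(n+2)T}$. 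You instead carry the exponential weight $e^{A_t}$ with $\lambda_s=2L_{f,\ry}+2\beta_s^2+2\|\gamma_s\|^2_{L^2(\nu)}$ and localise at $\tau=\inf\{t:A_t>n\}\wedge T$. This is equally valid, but note one point you leave implicit: the stopped difference equation on $[0,\tau]$ has terminal value $\delta Y_\tau$, which on $\{\tau<T\}$ is not $\delta\xi$ and arrives multiplied by $e^{A_\tau}=e^{n}$; you must then beat this extra $e^{n}$ with a Markov bound $\one_{\{A_T>n\}}\le e^{-2n}e^{2A_T}$ rather than $e^{-n}e^{A_T}$, which still closes because all exponential moments of $|X|_\infty$ are finite, though the constant in the final exponential term will not come out as exactly $(T+1)^2$. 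The paper's indicator splitting avoids the stopping time and this terminal-value bookkeeping entirely, at the price of a slightly less transparent Young-inequality step; your weighting makes the cancellation mechanism more visible. Both yield the stated estimate.
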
 
   
  \begin{proof}
   We denote differences of the solutions by $\Delta Y_t:=Y_t-Y'_t$ etc.~and the one of the terminal conditions by $\Delta\xi:=\xi-\xi'$.
  We will here use the notation $\Theta_t: =( Y_t,Z_t,U_t)$ and $\Theta'_t :=( Y'_t,Z'_t,U'_t).$  
  
  By standard arguments using It\^o's formula for $|\Delta Y_t|^2$, Doob's $L^p$- and Young's inequalities, we find that one obtains $C>0$ (we will keep the constant's name $C$ in all subsequent estimates, but its value may change as we proceed) such that for all $t\in [0,T]$,
  \begin{align}\label{eq:apriori-1}
  &\EE\sup_{s\in[t,T]}|\Delta Y_s|^2+\EE\int_t^T|\Delta Z_s|^2ds +\EE\int_t^T \|\Delta U_s\|_{L^2(\nu)}^2ds\\
  &\leq C\EE\bigg[|\Delta \xi|^2+\bigg(\int_t^T |\Delta Y_s||f_h(s,\Theta_s)-f'_{h'}(s,\Theta'_s)|ds\bigg)\bigg].\nonumber
  \end{align}
  
  It holds using Assumption \ref{Ypath-assumptions} on $f'$ and $h'$ that
  $$|f'_{h'}(s,\Theta_s)-f'_{h'}(s,\Theta'_s)|\le C\Big (|\Delta Y_s|+(1+|X^s|_\infty^{\tr l} )\Big (|\Delta Z_s|+\|\Delta U_s\|_{L^2(\nu)} \Big).$$
  Here the bound $C (1+|X^s|_\infty^{\tr l}) \|\Delta U_s\|_{L^2(\nu)}^2$ is coming from Assumption \ref{Ypath-assumptions} (v), the bound on $U$ and the fact that 
  \begin{align*}
   &\left| \int_{\R_0}  (h(s, U_s(v)) -  h(s, U'_s(v)) )  \kappa (v)  \nu(dv)\right | \\
   &\le C(1+ |X^s|_\infty^{\tr m_2} )     
   \int_{\R_0} |U_s(v) - U'_s(v)|  \kappa (v)  \nu(dv) \\
    &\le C(1+ |X^s|_\infty^{\tr m_2}
     ) \, \|\kappa\|_{L^2(\nu)}\,\|\Delta U_s\|_{L^2(\nu)} .
      \end{align*}
   Now let us introduce $Q_{n,t}=\{(\omega,t): T\sup_{0\le s \le t}|X_s|>n\}$. By splitting up the domain and using Assumption \ref{Ypath-assumptions} and the bounds in $X$ for $Z$ and $U$, we obtain 
  \begin{align} \label{Y-timesDiff-f}
  &|\Delta Y_s||f_h(s,\Theta_s)-f'_{h'}(s,\Theta'_s)|  \notag\\
  &\leq C|\Delta Y_s|\Big (|\Delta Y_s|+(1+ |X^s|_\infty^{\tr  l})\Big (|\Delta Z_s|+\|\Delta U_s\|_{L^2(\nu)} \Big) +|(f_h -f'_{h'})(s,\Theta_s)|\Big )\nonumber\\
  &\leq C(|\Delta Y_s|^2 +|\Delta Y_s||(f_h -f'_{h'})(s,\Theta_s)|)\nonumber\\
  &\quad+C|\Delta Y_s|(1+   |X^s|_\infty^{\tr  l})(|\Delta Z_s|+\|\Delta U_s\|_{L^2(\nu)})(\one_{Q^c_{n,s}}+\one_{Q_{n,s}})\nonumber\\
  &\leq C(|\Delta Y_s|^2 +|\Delta Y_s||(f_h -f'_{h'})(s,\Theta_s)|)\nonumber\\
  &\quad+C|\Delta Y_s|(1+ |X^s|_\infty^{ \tr l} )  ( |\Delta Z_s|+\|\Delta U_s\|_{L^2(\nu)})\one_{Q^c_{n,s}} \nonumber\\
  &\quad + C|\Delta Y_s|(1+|X^s|_\infty^{{1+ \tt r}})\one_{Q_{n,s}}.
  \end{align}
  
  With the help of Young's inequality $yz\leq \frac{cy^2}{2}+\frac{z^2}{2c}$ for arbitrary $c>0$, bounding $|\Delta Y|$ by its supremum, we may estimate the term
  \begin{align}\label{eq:apriori-two}
  &|\Delta Y_s||f_h(s,\Theta_s)-f'_{h'}(s,\Theta'_s)|\\
  &\leq C|\Delta Y_s|^2+\frac{3Cc}{2}|\Delta Y_s|^2 (1+|X^s|_\infty^{2rl})\one_{Q_{n,s}^c}+\frac{C}{2c}(|\Delta Z_s|^2+\|\Delta U_s\|_{L^2(\nu)}^2)\nonumber\\
  &\quad+|X^s|_\infty^{2{\tt r}+2}\one_{Q_{n,s}}+\sup_{r\in [t,T]}|\Delta Y_r||(f_h -f'_{h'})(s,\Theta_s)|.\nonumber
  \end{align} 
  Note that on $Q_{n,s}^c$, we can estimate $1+|X^s|_\infty^{2lr}\leq 1+n.$
  With the above estimates, using again Young's inequality and choosing $c$ in an adequate way, we infer from \eqref{eq:apriori-1} that
  \begin{align*}
  &\EE\sup_{s\in[t,T]}|\Delta Y_s|^2+\EE\int_t^T|\Delta Z_s|^2ds +\EE\int_t^T \|\Delta U_s\|_{L^2(\nu)}^2ds\\
  &\leq C\EE\bigg[|\Delta \xi|^2+(n+2)\int_t^T|\Delta Y_s|^2ds\nonumber \\
  &\quad\quad+\bigg(\int_t^T|(f_h -f'_{h'})(s,\Theta_s)|ds\bigg)^2+\int_t^T|X^s|_\infty^{2 \tr +2}\one_{Q_{n,s}}ds\bigg].\nonumber
  \end{align*} 
  So, replacing $|\Delta Y_s|^2$ on the right hand side by $\sup_{r\in [s,T]}|\Delta Y_r|^2+\int_s^T(|Z_r|^2+\|U_r\|^2)dr$, we can use Gronwall's inequality to arrive at
  
  \begin{align*}
  &\EE\sup_{s\in[t,T]}|\Delta Y_s|^2+\EE\int_t^T|\Delta Z_s|^2ds +\EE\int_t^T \|\Delta U_s\|_{L^2(\nu)}^2ds\\
  &\leq Ce^{(n+2)(T-t)}\EE\bigg[|\Delta \xi|^2+\bigg(\int_t^T |(f_h -f'_{h'})(s,\Theta_s)|ds\bigg)^2\\
  &\quad \quad\quad+\int_t^T|X^s|_\infty^{2\tr+2}\one_{Q_{n,s}}ds\bigg].
  \end{align*}
  Since
  \begin{align*}
    e^{nT} |X^s|_\infty^{2\tr+2}\one_{Q_{n,s}}&\le e^{T^2 \sup_{0\le u \le s}|X_u| } (1+ |X^s|_\infty^{3})\one_{Q_{n,s}}\\
    &\le e^{T^2 \sup_{0\le u \le s}|X_u| } 2e^{|X^s|_\infty}e^{-n}e^{T \sup_{0\le u \le s}|X_u|}\\
    &\le e^{-n}2e^{(T+1)^2 |X|_\infty },
  \end{align*}
 we conclude that there is a $C>0$ such that
  \begin{align*}
  &\EE\sup_{s\in[t,T]}|\Delta Y_s|^2+\EE\int_t^T|\Delta Z_s|^2ds +\EE\int_t^T \|\Delta U_s\|_{L^2(\nu)}^2ds \notag\\
  &\leq Ce^{Cn}\EE\bigg[|\Delta \xi|^2+\bigg(\int_t^T |(f_h -f'_{h'})(s,\Theta_s)|ds\bigg)^2\bigg] \notag\\
  &\quad +e^{-n}C \EE \left[ e^{(T+1)^2) |X|_\infty }\right]
  \end{align*}
  and we just have to take $t=0$ in the previous inequality.
  \end{proof}

\subsection{Proof of Theorem \ref{ex-and-unique-sol} }
  The uniqueness follows from Lemma \ref{lem:apriori}. It remains to show existence. To that end, consider the terminal condition $\xi^M :=g^M(X)$ and the generator $f^M$  as given in  \eqref{data-cut-off}. By Proposition \ref{bounds-independend-from-M},  a solution $(Y^M,Z^M,U^M)$ exists which obeys the condition \eqref{bd-cond-x}  with the same $a,b,c$ for any $M$. Let us show by  Lemma \ref{lem:apriori}   that 
  $(Y^M,Z^M,U^M)_{M=1}^\infty$ is a Cauchy sequence in $\mathcal{S}^2\times L_2(W)\times L_2(\tilde{\nN})$. 
   We note that  by choosing  $n$ large  the third term of the r.h.s.~of \eqref{apriori_est} can be made arbitrarily  small.  
  So it suffices to show that  the other two expressions on the r.h.s.~of \eqref{apriori_est} are small for  large $M, \ov{M}$.    We show that  $\xi^M$ converges to $\xi^{\ov{M}}$ in $L^2.$
    By  Hölder's and Markov's inequality and Lemma \ref{exponential-bound}, if $M\le \ \ov{M},$
   \begin{align*} 
  \EE|\xi^M- \xi^{\ov{M}}|^2&=\E|g^M(X)- g^{\ov{M}}(X)|^2\\
    &\le \E\Big (( c+\frac{\alpha}{2}(|X^M|_\infty^{\tt r}+|X^{\ov{M}}|_\infty^{\tt r}))  |X^M- X^{\ov{M}}|_{\infty}  \Big)^2\\
    & \le \E\Big (( c+\alpha|X|_\infty^{\tt r}) |X|_\infty\one_{|X|_\infty\ge M}  \Big)^2\\
    &\le C( 1+ ( \E |X|_\infty^{4({\tt r}+1)})^{\frac{1}{2}} )  \big (\E \one_{|X|_\infty\ge M}  \Big)^{\frac{1}{2}}\le C' \frac{1}{M}.
    \end{align*}

 We continue with   
    
    \begin{align} \label{the-int}  \EE\bigg(&\int_0^T|f^M_h( s, X^s, \Theta^M_s) -f^{\ov{M}}_{h'}( s,X^s,\Theta^M_s) |ds\bigg)^2,
    \end{align}
  where 
 $$ f_h^M(s,\bx, y,z, {\bf u}) := f\!\! \left (\!s,b_M(\bx), y, b_M(z),  b_M \Big( \!\int_{\R_0}  h(s, b_M ( {\bf u}(v)) ) \kappa (v)  \nu(dv) \Big )\!\right).  $$
 and   
    $$ f_{h'}^{\ov{M}}(s,\bx^s\!, y,z, {\bf u}) := f \!\!\left (\!s,b_{\ov{M}}(\bx), y,b_{\ov{M}}(z), b_{\ov{M}} \Big( \int_{\R_0}  h(s, b_{\ov{M}} ( {\bf u}(v))) \kappa (v)  \nu(dv) \! \Big ) \! \right).  $$
    
Since Assumptions \ref{Ypath-assumptions}-(i) and (ii) are of the same type, the $x$ difference  of $f$ behaves like $\xi^M-\xi^{\ov{M}}$ so we get  for $M\le \ \ov{M}$ the upper bound 
    \begin{align*}
   & |f_h( s, b_M( X^s), \Theta^M_s) -f_h( s,b_{\ov{M}}(X^s),\Theta^M_s) | \\ 
   & \le     ( c+\frac{\beta}{2}(|X^M|_\infty^{\tt r}+|X^{\ov{M}}|_\infty^{\tt r})) \,  |X^M- X^{\ov{M}}|_{\infty}  \\
   & \le  ( c+\beta|X|_\infty^{\tt r}) |X|_\infty\one_{|X|_\infty\ge M}.
     \end{align*}
 Similarly,  we can treat $ f_h^M - f^{\ov{M}}_{h'}$ concerning $ Z_s^M$    and  $U_s^M.$  
For example,  by   Assumption \ref{Ypath-assumptions}(iv) and  Proposition \ref{bounds-independend-from-M},
 \begin{align*}
&   \left (c+ \frac{\gamma}{2} (| b_M( Z_s^M) |^\ell + |b_{\ov{M}}(  Z_s^M)   |^\ell) \right )  | b_M( Z_s^M)  - b_{\ov{M}}(  Z_s^M)| \\
 & \le   \left (c+ \gamma ( a_\infty +  b_\infty |X^t|_\infty^{\tr} )^\ell \right )  (a_\infty +  b_\infty |X^t|_\infty^{\tr}   ) \, \one_{(a_\infty +  b_\infty |X^t|_\infty^{\tr}) \ge M}.
    \end{align*}

  Continuing like for $\xi^M$ we see that the integral term \eqref{the-int}    is arbitrarily small for   large  $M\le \ \ov{M}.$

This ends the proof that $(Y^M,Z^M,U^M)_M$ is a Cauchy sequence in $\mathcal{S}^2\times L_2(W)\times L_2(\tilde{\nN}).$ 
 We call the limit  $(Y,Z,U)$.

From Proposition \ref{bounds-independend-from-M} we conclude that $Z$ and $U$ satisfy \eqref{bd-cond-x} setting $a:=a_\infty$ and $b:=b_\infty$.
The bound for $Y$ follows  in the same way from the convergence of $Y^M$ to $Y$  in $\mathcal{S}^2$ and the bound in  \eqref{Y-pi-bound}.  \bigskip \\
Looking at \eqref{BSDE-truncated}, we get that the left hand side $Y_t^M$ converges in $L^2$ to $Y_t$, and the stochastic integrals $\int_t^T Z^M_sdW_s, \int_{]t,T]\times\RR}U^M_s(x)\tilde{\nN}(ds,dx)$ converge in $L^2$ to the respective terms $\int_t^T Z_sdW_s, \int_{]t,T]\times\RR}U_s(x)\tilde{\nN}(ds,dx)$. It remains to show that also $I^M:=\int_t^Tf^M_{h}(s,X_s,Y^M_s,Z^M_s,U^M_s)ds$ does converge.
 
Since $I^M$ is a Cauchy sequence in $L^2$ it converges in $L^2$. There exists a subsequence s.t.~$I^{M_k}$ converges a.s. Moreover, the estimates \eqref{Z-and-U-pi-bound} and  
\eqref{Y-pi-bound}   imply that 
\begin{align*}
  f^M_h(s,X_s,\Theta^M_s) \le C(1+ |X|_\infty^{1+r})
\end{align*}
which is an integrable bound. Then by the dominated convergence theorem 
\begin{align*}
\lim_{k\rightarrow +\infty} I^{M_k}=\int_t^T f_h \Big(s,X^s,\Theta_s\Big)ds.
\end{align*}
\bigskip

Finally, to get that the solution is Malliavin differentiable, we may apply Lemma \ref{lemma123} using that we have $L^2$ convergence of $(Y^M_s,Z^M_s, U^M_s)$ to $(Y_s,Z_s, U_s).$ 
For $x \neq 0$ the uniform bound for the   $\mathbb{D}_{1,2}^{\R_0}$ norms  follows immediately from   Proposition \ref{bounds-independend-from-M}   and Lemma \ref{derivative-bounds}.  \\

To get a uniform bound for the  $\mathbb{D}_{1,2}^0$  norms   if $x=0$ we reuse  the proof of Lemma \ref{lem:apriori}: 
We set
\begin{align*}
&(Y,Z,U)  :=  (0,0,0), \quad (Y',Z',U')  := (D_{s,0}Y^M, D_{s,0}Z^M,D_{s,0}U^M), \\
&\xi := 0, \quad \xi' :=D_{s,0} g^M(X),\quad f = 0, \quad
f_h' = D_{s,0}  f^M \left( t, X^t, \Theta^M_t  \right).
\end{align*}
Clearly, the assumptions on $Z$ and $U$ of Lemma \ref{lem:apriori} are not satisfied for  $D_{t,0}Z^M$ and $D_{t,0}U^M$, but it is possible to derive the a priori estimate as follows: Note that   Lemma \ref{chain-rule-lemma} implies that 
\begin{align*}
& |D_{s,0}  f^M \left( t, X^t, \Theta^M_t  \right) |\\& \le |  ( D_{s,0}  f^M ( t, X^t,y) ) |_{y=(Y^M_t,Z^M_t,H^M_t)} |\notag\\
  &\quad  + L_y \,| D_{s,0} Y^M_t|  +   (c+ |Z^M_t|^{\tr} ) |D_{s,0} Z^M_t | +  (c+ |H^M_t|^{\tr} ) |D_{s,0} H^M_t |. 
\end{align*}
Using  \eqref{Diff-BSDE}, the counterpart of \eqref{Y-timesDiff-f} reads as 
\begin{align*}
&|   D_{s,0}  Y^M_t| | D_{s,0}  f^M\left( t, X^t, \Theta^M_t  \right) | \\
&\le |   D_{s,0}  Y^M_t| | D_{s,0}  f^M\left( t, X^t, \Theta^M_t  \right) -D_{s,0}  f^M(t,X^t, 0) |\\
& \quad + |   D_{s,0}  Y_t| |D_{s,0}  f^M(t,X^t, 0) | \\
 &\leq C(| D_{s,0}  Y^M_t|^2 +| D_{s,0}  Y^M_t| \, |D_{s,0}  f^M(t,X^t, 0) |)\nonumber\\
  &\quad+C| D_{s,0}  Y^M_t|(1+ |X^s|_\infty^{ \tr l}  )  ( |D_{s,0} Z_t^M|+\|D_{s,0}U^M_t\|_{L^2(\nu)})\one_{Q^c_{n,s}} \nonumber\\
  &\quad + C|D_{s,0}  Y_t^M|(1+|X^s|_\infty^{{\tt r+1}})\one_{Q_{n,s}}.
\end{align*}
From here we can just proceed like in the proof of  Lemma \ref{lem:apriori}
to get 
\begin{align}\label{apriori_est-2}
  &\EE|D_{s,0} Y^M|_\infty^2 + \EE\int_0^T|D_{s,0} Z^M_t|^2dt +\EE\int_0^T  \| D_{s,0} U^M_t\|_{L^2(\nu)}^2dt\notag \\
  &\leq Ce^{Cn}\bigg(\EE|D_{s,0} g^M(X)|^2+\EE\bigg(\int_0^T| D_{s,0} f^M(t,X^t,0)|dt\bigg)^2\bigg) \notag\\
  &\quad+e^{-n}C\EE \left[e^{(T+1)^2|X|_\infty)}\right].
   \end{align}
This provides us with a uniform bound  for the  $\mathbb{D}_{1,2}^0$  norms which can be derived using \eqref{Df-estimate} combined with Lemma \ref{exponential-bound}   so that thanks to the convergence in $L^2$ of $(Y^M_t, Z^M_t, U^M_t)$ to $(Y_t, Z_t, U_t)$ 
we get by Lemma \ref{lemma123}   that  $Y_t, Z_t, U_t(v) \in \mathbb{D}_{1,2}^0$ and also   $\int_t^T Z_{s}   dW_s $  and
$\int_{{]t,T]}\times{\R_0}}U_{s}(v) \tilde \nN(ds,dv)$ are in  $\mathbb{D}_{1,2}^0.$  Then we know that also 
$\int_t^T  f\left( s, X^s,Y_s, Z_s,  H_s  \right)ds \in \mathbb{D}_{1,2}^0.   $  
 \qed

\appendix \section{Appendix}\label{app}

\subsection{Stochastic Gronwall and Bihari-LaSalle inequalities}

The article \cite{SarahGeiss21} contains stochastic Gronwall and Bihari-LaSalle inequalities for many different settings. For the convenience of the reader we cite one special  case  
here  (\cite[Theorem 3.1(a)] {SarahGeiss21})  which suits our situation.
\begin {thm} \label{Sarah2}
Assume that $( {\sf X}_t)_{t \ge 0}$ is  an adapted, c\`adl\`ag, non-negative process  satisfying the 
inequality 
\begin{align}  \label{x-inequality2}
  {\sf X}_t  \le \int_{]0,t]}  \eta\big(\sup_{0 \le u < s} {\sf X}_{u}\big)  dA_s  + M_t + H_t,
\end{align}
where for $T>0$ and  $0<p<1$ it holds
\begin{itemize}
\item There is a $c_0 \ge 0$ such that  $ {\sf X}_t \ge c_0$ for  all $t\ge 0$. 
\item $\eta\colon]c_0,\infty[\to]0,\infty[$ is continuous, non-decreasing and convex and such that
$$ \eta^{(p)}(x):= \frac{p}{1-p}\eta(x^{1/p})x^{1-1/p} $$ 
is convex and $C^1,$    and   $ \lim_{x\searrow c_0}\eta^{(p)}(x):=0.$
\item $( A_t)_{t \ge 0}$  is a  predictable, non-decreasing c\`adl\`ag process with $A_0 = 0$.

\item  $( H_t)_{t \ge 0}$ is an adapted,  non-negative non-decreasing c\`adl\`ag process, \\ $\E H_T < \infty$
\item   $( M_t)_{t \ge 0}$ is a c\`adl\`ag local martingale with $M_0 = 0.$
\end{itemize}
Then one has for $0<p<1$
\begin{align} \label{Bihari}  
\E_0 \big[ G^{-1}\big(G(|{\sf X^T}|_\infty)-(1-p)^{-1}A_T \big)^p\big]  \le   \frac{1}{(1-p)}    (\E_0 [  H_T ] )^p   
\end{align}
where $G(x):=\int_r^x\frac{1}{\eta(u)}du$ for some $r>c_0$. \bigskip

 In the special case  where $\eta(x) =x$ and $( A_t)_{t \ge 0}$ is deterministic we have for $0<p<1$
\begin{align} \label{stochGronwall} 
   \E_0 [ |{\sf X^T}|_\infty^p \, ]   \le \frac{1}{1-p} \,   (\E_0 [  H_T ] )^p \, e^{\frac{p}{1-p} A_T} .
 \end{align}  
\end{thm}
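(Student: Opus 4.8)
The plan is to treat this as a special case of the general stochastic Bihari-LaSalle machinery of \cite{SarahGeiss21}, and to prove it in two stages: first the linear Gronwall estimate \eqref{stochGronwall}, whose heart is a weak-type maximal inequality for the running supremum that is available for local martingales precisely because $p<1$; and then the nonlinear bound \eqref{Bihari} by a Bihari-type change of variables through $G$, where the standing hypotheses on $\eta^{(p)}$ are exactly what is needed for the inversion to survive the conditional expectation.

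For the linear case I would first replace $M$ by a localising sequence of true martingales and recover the general statement at the end by Fatou's lemma. Writing $Z_t:={\sf X}_t$ and $Z^*_t:=\sup_{0\le s< t}Z_s$, the inequality \eqref{x-inequality2} with $\eta(x)=x$ and $A$ deterministic reads $Z_t\le \int_{]0,t]} Z^*_s\,dA_s + M_t + H_t$. For a fixed level $\lambda>c_0$ I would introduce the stopping time $\sigma_\lambda:=\inf\{t\ge 0: Z_t\ge \lambda\}\wedge T$ and evaluate \eqref{x-inequality2} at $\sigma_\lambda$; since $A$ is deterministic and $H$ is non-decreasing, optional sampling of the martingale part yields a weak-type bound of the form $\lambda\,\PP_0(Z^*_T\ge\lambda)\le e^{A_T}\big(\E_0 H_T + \text{(controlled terms)}\big)$, where $\PP_0=\PP(\,\cdot\mid\fF_0)$. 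The decisive step is then the layer-cake representation of $\E_0[(Z^*_T)^p]$: integrating this weak $(1,1)$-type estimate against $p\lambda^{p-1}\,d\lambda$ over $\lambda\in(c_0,\infty)$ converges exactly because $p<1$, and it produces the constant $\tfrac{1}{1-p}$ together with the Gronwall factor $e^{\frac{p}{1-p}A_T}$, which is \eqref{stochGronwall}. No $L^p$-control of $M$ for $p\ge 1$ is ever required, which is why the argument tolerates a mere local martingale.

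For the nonlinear case I would linearise through the increasing bijection $G(x)=\int_r^x \eta(u)^{-1}\,du$: exactly as in the deterministic Bihari-LaSalle lemma, $G$ turns the drift $\int \eta(Z^*)\,dA$ into an additive $A$-term, so that $G(Z^*)$ obeys a Gronwall-type relation to which the maximal argument above applies. Transferring the estimate back through $G^{-1}$ under $\E_0$ is where the assumptions enter: because $u\mapsto u^p$ is concave while $\eta^{(p)}$ is assumed convex and $C^1$ with $\eta^{(p)}(c_0^{+})=0$, the composite $G^{-1}$ interacts with the power $p$ so that Jensen's inequality can be applied in the correct direction, delivering $\E_0\big[G^{-1}\big(G(|{\sf X^T}|_\infty)-(1-p)^{-1}A_T\big)^p\big]\le \tfrac{1}{1-p}(\E_0 H_T)^p$. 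The main obstacle, and the step I expect to require the most care, is controlling the local-martingale contribution inside this nonlinear transformation: one must show that the weak-type estimate from the linear core survives composition with $G^{-1}$ and the supremum sitting inside $\eta$, and it is precisely the convexity of $\eta^{(p)}$ that makes this passage consistent. The fully general version with predictable $A$ and stochastic $H$ then follows by the same scheme using conditional versions of the maximal inequality.
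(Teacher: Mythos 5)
First, a point of order: the paper contains no proof of this statement at all --- Theorem \ref{Sarah2} is quoted verbatim, ``for the convenience of the reader,'' from \cite[Theorem 3.1(a)]{SarahGeiss21}. So your proposal can only be judged against that reference and on its own internal coherence, and on both counts there are genuine gaps.

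Your first stage is essentially Scheutzow's classical argument for the linear stochastic Gronwall lemma, and the ingredients (localisation, stopping at a level, weak-type bound, layer-cake integration exploiting $p<1$) are the right ones in spirit. But the pivotal display is asserted rather than derived: a single optional-sampling step at $\sigma_\lambda=\inf\{t:{\sf X}_t\ge\lambda\}\wedge T$ does \emph{not} give $\lambda\,\PP_0({\sf X}^*_T\ge\lambda)\le e^{A_T}(\E_0 H_T+\cdots)$. Using ${\sf X}^*_s\le\lambda$ for $s\le\sigma_\lambda$ in \eqref{x-inequality2} yields only $\lambda\,\PP_0(\sigma_\lambda\le T)\le \lambda A_T+\E_0 H_T$, which is vacuous once $A_T\ge 1$. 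Producing an exponential factor requires partitioning $[0,T]$ into pieces with small $A$-increment and compounding the estimate, and even then one obtains some $e^{cA_T}$ with non-sharp constants rather than the specific pair $\frac{1}{1-p}(\E_0 H_T)^p\,e^{\frac{p}{1-p}A_T}$ of \eqref{stochGronwall}; obtaining sharp constants is precisely the contribution of \cite{SarahGeiss21}, whose proofs run through (sharp monotone versions of) Lenglart's domination inequality rather than level-crossing iteration. Symptomatically, your sketch never uses the predictability of $A$, although it is a standing hypothesis: once $A$ is random, your deterministic-threshold stopping argument no longer decouples from $A$, and predictability is exactly what Lenglart-type domination needs, so the closing remark that the general case ``follows by the same scheme'' is unjustified.

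The second stage contains the more serious gap. The claim that ``$G$ turns the drift $\int\eta({\sf X}^*)\,dA$ into an additive $A$-term, so that $G({\sf X}^*)$ obeys a Gronwall-type relation'' is false as stated: \eqref{x-inequality2} is an inequality, not an equation; $G$ is nonlinear (indeed $G'=1/\eta$ is non-increasing, so $G$ is concave on the relevant range); and composing $G$ with ${\sf X}_t$ destroys the decomposition into local martingale plus increasing part --- there is no It\^o or optional-sampling argument available for $G(M+\text{drift})$ when $M$ is merely a c\`adl\`ag local martingale with jumps. This is exactly what the deterministic Bihari--LaSalle lemma does not have to face, and it is the point you yourself flag as ``the main obstacle''; flagging it is not resolving it. Convexity of $\eta^{(p)}$ does not by itself ``make the passage consistent'': in \cite{SarahGeiss21} the hypotheses that $\eta^{(p)}$ is convex, $C^1$ and vanishes at $c_0^+$ are used to establish concrete concavity/monotonicity properties of the transformed map built from $G^{-1}$ and the $p$-th power, which are then combined with the domination inequality; your sketch derives no such functional inequality and never specifies to which random variable Jensen is applied, in which direction, or under which conditional expectation. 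As it stands, the proposal could plausibly be completed (after the partition-and-iterate repair) into a proof of a variant of \eqref{stochGronwall} with worse constants, but it does not constitute a proof of \eqref{Bihari}.
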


 \subsection{Proof of Lemma \ref{chain-rule-lemma} \label{proof-of-chain-rule}}
We will use Sugita's approach to define $\mathbb{D}_{1,2}$ which uses some kind of G\^ateaux  derivative.  We need some notation:
First of all, we exploit the L\'evy-It\^o decomposition, especially, the independence of the Gaussian term and the jump term, to interpret  the probability space 
$(\Omega,\mathcal{F},\mathbb{P})$ as  the completion of  $(\Om^W\times\Om^J, \ftn^W\otimes\ftn^J, \PP^W\otimes\PP^J),$ where 
$(\Om^W, \ftn^W, \PP^W)$ is the canonical Wiener space and the separable Hilbert space $E:=(\Om^J,\ftn^J,\PP^J)$ carries the pure jump process (for more details see  \cite{GeissStein16}). 
Following Janson (\cite[Example 15.6]{Janson}) we let
$$H:=\Big \{ \int_0^T h(s) dW_s: h \in L^2[0,T] \Big  \}$$
be the Hilbert space with the  inner product
$\langle \eta_1,  \eta_2 \rangle := \E \eta _1 \eta_2 .$
For $h \in L^2([0,T])$, we  
  define
$$  g_h(t):= \int_0^t h(s)ds $$
and the {\it Cameron-Martin shift}   
$$ \rho_h  \xi(\om^W):= \xi( \om^W +  g_h)$$
 for any random variable $\xi \in L^0(\PP^W; E).$
 We will denote by $HS(L^2([0,T]);E)$ the space of Hilbert-Schmidt operators between $L^2([0,T])$ and $E.$     
\begin{definition} [\cite{Bogachev,Janson}]
\begin{enumerate}[(i)]
\item A random variable $\xi \in  L^0(\PP^W; E)$  is {\it absolutely continuous along  $h \in L^2([0,T]) $} ($h$-a.c.)  if there exists a  random
variable $ \xi^h \in  L^0(\PP^W; E)$     such that
$ \xi^h = \xi \,\,\, a.s.$ and for all $\om^{_W\!\!}  \in \Om^W$ the map  \[ u \mapsto \xi^h(\om^{_W\!\!}  + u \, g_h)  \]
 is absolutely continuous  on bounded intervals of $\R.$
 \item $\xi \in L^0(\PP^W; E)$ is {\it ray absolutely continuous} (r.a.c.) if $\xi$ is $h$-a.c. for every $h \in L^2([0,T]).$
\item For  $\xi \in L^0(\PP^W; E)$  and $h \in L^2([0,T])$ we say the {\it directional derivative  $\partial_h \xi \in L^0(\Om^W; E)$ exists } if
\equa
\frac{ \rho_{u h} (\xi) - \xi}{u} \to^{\hspace*{-0.7em}\PP^W } \,    \partial_h \xi,  \quad u \to 0.
\tion
\item $\xi \in L^0(\PP^W; E)$ is called  {\it stochastically G\^ateaux differentiable} (s.G.d.)    if  $\partial_h \xi$ exists
for every $h \in L^2([0,T])$ and there exists an  $HS(L^2([0,T]);E)$-valued random variable denoted by $\tilde \D \xi$ such that for every $h \in L^2([0,T])$
\equa
        \partial_h \xi=   \langle \tilde \D \xi ,h\rangle_{L^2([0,T])}, \quad   \PP^W\text{-} a.s.
\tion
\end{enumerate}
 \end{definition}

 The next theorem does in fact hold for $p\ge 1$ while we only need here the version for $p=2$.
 \begin{thm} [  {\cite[Theorem 3.1]{Sugita}}] 
\label{Sugita-thm}
 Then   $\mathbb{D}_{1,2}^0$ can be identified with
\begin{align*}
\mathbb{D}^W_{1,2}(E)  := \!\{\xi \in L^2(\PP^W\!; E)\colon \! \xi &\text{ is  r.a.c., s.G.d. and }\\ & \tilde\D \xi \in L^2(\PP^W\!\!; H\!S(L^2([0,T]);\!E)) \},
\end{align*}
and  for  $\xi \in  \mathbb{D}^W_{1,2}(E) $  it holds  $D_{\cdot,0} \xi = \tilde \D \xi$ a.s.
\end{thm}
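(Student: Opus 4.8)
The statement is Sugita's characterization \cite[Theorem 3.1]{Sugita}, formulated there for real-valued $\xi$; since $E=(\Om^J,\ftn^J,\PP^J)$ is a separable Hilbert space and the whole argument concerns only the Gaussian factor $\om^W$, the plan is to transfer it verbatim to the $E$-valued setting and to prove the two inclusions between $\mathbb{D}_{1,2}^0$ and $\mathbb{D}^W_{1,2}(E)$, keeping track of the derivatives so that the identity $D_{\cdot,0}\xi=\tilde{\D}\xi$ drops out at the same time. Throughout, chaos expansions, the norm $\|\cdot\|_{\DD}$ and the multiple integrals $I_n$ refer to the $\PP^W$-randomness with $E$-valued kernels.

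For the inclusion $\mathbb{D}_{1,2}^0\subseteq\mathbb{D}^W_{1,2}(E)$ I would first verify everything on the dense class of smooth cylindrical functionals $\xi=\phi(I_1(h_1),\dots,I_1(h_k))$ with $\phi\colon\R^k\to E$ smooth and boundedly differentiable. For these the Cameron--Martin shift acts by $\rho_{uh}\xi=\phi(I_1(h_1)+u\langle h_1,h\rangle,\dots)$, which is smooth in $u$, so that $\partial_h\xi=\sum_j\partial_j\phi\,\langle h_j,h\rangle=\langle D_{\cdot,0}\xi,h\rangle$ because $D_{\cdot,0}\xi=\sum_j\partial_j\phi\,h_j(\cdot)$; hence such $\xi$ is r.a.c. and s.G.d. with $\tilde{\D}\xi=D_{\cdot,0}\xi$. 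I would then take a general $\xi\in\mathbb{D}_{1,2}^0$ together with smooth approximants $\xi_n\to\xi$ in $\|\cdot\|_{\DD}$, which yields $\xi_n\to\xi$ in $L^2(\PP^W;E)$ and $D_{\cdot,0}\xi_n\to D_{\cdot,0}\xi$ in $L^2(\PP^W\otimes\lambda;E)$.

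The hard part will be upgrading this $L^2$-convergence to the pathwise absolute-continuity statement for the limit. Fixing a direction $h$, I would start from the exact identity $\xi_n(\om^W+ug_h)-\xi_n(\om^W)=\int_0^u\langle D_{\cdot,0}\xi_n(\om^W+vg_h),h\rangle\,dv$, valid for the smooth $\xi_n$, and transport it to the limit through the Cameron--Martin quasi-invariance: the law of $\om^W\mapsto\om^W+ug_h$ has density $\exp(uI_1(h)-\tfrac{u^2}{2}\|h\|^2)$ with respect to $\PP^W$, bounded in $L^2(\PP^W)$ uniformly for $u$ in a bounded interval. A Fubini argument combined with this density bound should let me pass to the limit along a subsequence on both sides, producing $\xi(\om^W+ug_h)-\xi(\om^W)=\int_0^u\langle D_{\cdot,0}\xi(\om^W+vg_h),h\rangle\,dv$ almost surely. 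This is precisely ray absolute continuity along $h$ and gives $\partial_h\xi=\langle D_{\cdot,0}\xi,h\rangle$; since $h\mapsto\langle D_{\cdot,0}\xi,h\rangle$ is the bounded operator represented by $D_{\cdot,0}\xi\in L^2(\PP^W;HS(L^2([0,T]);E))$, s.G.d. follows with $\tilde{\D}\xi=D_{\cdot,0}\xi$. The delicate interchange of the pathwise property with the $L^2$-limit, and the attendant null-set bookkeeping, is where the real effort lies (cf. \cite{Bogachev,Janson}).

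For the converse inclusion $\mathbb{D}^W_{1,2}(E)\subseteq\mathbb{D}_{1,2}^0$, I would argue by finite-dimensional projection. Given $\xi$ r.a.c., s.G.d. with $\tilde{\D}\xi\in L^2(\PP^W;HS(L^2([0,T]);E))$, fix an orthonormal basis $(e_j)_{j\ge1}$ of $L^2([0,T])$, set $G_j:=I_1(e_j)$ and $\xi_N:=\E[\xi\mid\sigma(G_1,\dots,G_N)]$. Writing $\xi_N=\psi_N(G_1,\dots,G_N)$ as an $E$-valued function on $\R^N$ under the standard Gaussian measure, the hypotheses descend to $\xi_N$ along the coordinate directions, and the classical absolute-continuity characterization of the finite-dimensional Gaussian Sobolev space $W^{1,2}$ yields $\xi_N\in\mathbb{D}_{1,2}^0$ with $D_{\cdot,0}\xi_N=\sum_{j=1}^N\langle\tilde{\D}\xi,e_j\rangle\,e_j$, the finite-rank projection of $\tilde{\D}\xi$. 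Then $\sup_N\E\int_0^T|D_{t,0}\xi_N|^2\,dt\le\|\tilde{\D}\xi\|^2_{L^2(HS)}<\infty$ and $\xi_N\to\xi$ in $L^2$, so Lemma \ref{lemma123} gives $\xi\in\mathbb{D}_{1,2}^0$; finally, matching chaos (equivalently, closedness of the operator $D_{\cdot,0}$) together with $D_{\cdot,0}\xi_N\to\tilde{\D}\xi$ in $L^2$ identifies $D_{\cdot,0}\xi=\tilde{\D}\xi$, completing both inclusions.
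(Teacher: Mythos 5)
You cannot be checked against an internal argument here, because the paper itself gives no proof of this statement: it is quoted from \cite[Theorem 3.1]{Sugita} (stated there for real-valued functionals and general $p>1$), with the Hilbert-space-valued extension taken as standard and the underlying factorization $(\Om^W\times\Om^J,\ftn^W\otimes\ftn^J,\PP^W\otimes\PP^J)$ set up via \cite{GeissStein16,Steinicke}. Judged on its own, your reconstruction follows essentially Sugita's original strategy --- smooth cylindrical functionals plus Cameron--Martin quasi-invariance for $\mathbb{D}_{1,2}^0\subseteq\mathbb{D}^W_{1,2}(E)$, finite-dimensional conditional expectations plus closability for the converse (your appeal to Lemma \ref{lemma123} is exactly the right tool from this paper's toolkit) --- and both halves are workable. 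The two technical debts you flag are genuine but standard, cf. \cite{Sugita,Bogachev}: constructing a modification $\xi^h$ for which $u\mapsto\xi^h(\om^W+ug_h)$ is absolutely continuous for \emph{every} $\om^W$, rather than the merely a.s.\ increment identity your Fubini/subsequence step yields; and the commutation of $\partial_{e_j}$ with $\E[\,\cdot\mid\sigma(G_1,\dots,G_N)]$ needed to descend the r.a.c.\ and s.G.d.\ hypotheses to $\xi_N$.

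The one substantive gap is at the very start. You decree that ``chaos expansions refer to the $\PP^W$-randomness with $E$-valued kernels'', but in this paper $\mathbb{D}_{1,2}^0$ is defined through the chaos expansion \eqref{chaos} with respect to the combined random measure $\mM$ of \eqref{measureM}, whose kernels $\tilde f_n$ mix Gaussian and jump arguments, with the membership condition constraining only the slices $\tilde f_n((t,0),\cdot)$. The assertion that this mixed-chaos space ``can be identified with'' the $E$-valued Wiener--Sobolev space $\mathbb{D}^W_{1,2}(E)$ is itself part of the theorem's content: one must regroup the symmetrized mixed kernels into Wiener chaoses with $L^2(\PP^J)$-valued kernels and verify that the derivative and norm conditions correspond under this regrouping --- this is precisely where the product-space representation of \cite{GeissStein16,Steinicke} enters. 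Your proposal proves the Sugita equivalence on the Wiener side but silently assumes that identification, so as written the claim about $\mathbb{D}_{1,2}^0$ --- the space actually used in the paper --- is not reached without that extra step.
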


\bigskip
\begin{proof}[Proof of  Lemma \ref{chain-rule-lemma}]
We start with item \eqref{X-estimate}. Since 
\begin{align*} 
 \rho_{u h} X_t -X_t = \int_0^t b(r,   \rho_{u h}X^r)   - b(r,   X^r)  dr + u \int_0^t \sigma(r) h(r)  dr. 
\end{align*} it holds
\begin{align*} 
 |\rho_{u h} X_t -X_t|   &\le  L_b \int_0^t | \rho_{u h}X^r  -  X^r |_\infty dr + u K_\sigma   \int_0^t | h(r) | dr,
\end{align*}
so that  by Gronwall's inequality,
\begin{align} \label{Cam-Martin-shift-of-X}
 |\rho_{u h} X^t -X^t|_\infty \le  u K_\sigma \| h \|_{L^1[0,T]}  \,\, e^{L_b t}.
\end{align}
From Proposition  \ref{bounds-for-DX}  we know that  $X_t$  is Malliavin differentiable.
Now  \cite[Theorem 3.11.]{GeissStein16} implies that in probability,
\begin{align*}   \langle  D_{\cdot,0} X_t ,h\rangle_{L^2([0,T])} 
&\le   \lim_{u \downarrow 0}  \frac{ | \rho_{u h} X_t - X_t |  }{u}   \le   K_\sigma \| h \|_{L^1[0,T]}  \,\, e^{L_b t},
\end{align*}
for any $h \in L^2([0,T])$ with $\int_0^T |h(r)|dr \le1.$
Hence   for a.e. $s \in [0,t],$
$$|D_{s,0} X_t |  \le  K_\sigma  \,\, e^{L_b t}.$$
In the same way one can prove  item \eqref{f-estimate}: It holds that  $\f  \left( t, X^t, (Y_1, ..., Y_d)\right ),$ $ \f  \left( t, \bx^t, (Y_1, ..., Y_d)\right ),\f  \left( t, X^t, (y_1, ..., y_d)\right ) \in \mathbb{D}_{1,2}^0. $   This follows  by Theorem \ref{StefanXilin}, for example  for the first term, from  the estimate
\begin{align*} 
&  \| \f  \left( t, X^t, Y \right ) - \f  \left( t, X^{t,\varphi},  Y^\varphi\right )\|_{L^2} \\
& \le  L_\bx \,\, \|  \Big (c+ \frac{\beta}{2}(|X^t|^ {\tt r}_\infty + |X^{t,\varphi}|^ {\tt r}_\infty) \Big)   |X^t-X^{t,\varphi}|_\infty \|_{L^2}  \\
  & \quad  + \sum_{k=1}^d  L_{y_k}  \, \left  \| \, Y_k- Y_k^\varphi \right\|_{L^2}  \le c \varphi.
\end{align*}
Indeed,  by Theorem \ref{StefanXilin} we  have that  $\sup_{0<\varphi \le 1}  \frac{\left  \| \, Y_k- Y_k^\varphi \right\|_{L^2}}{\varphi} <\infty.$ Moreover, thanks to the Cauchy-Schwarz inequality and \eqref{the-p-estimate},
\begin{align*}  
& \Big \|  \Big (c+ \frac{\beta}{2}(|X^t|^ {\tt r}_\infty + |X^{t,\varphi}|^ {\tt r}_\infty) \Big)   |X^t-X^{t,\varphi}|_\infty  \Big \|_{L^2}\\
& \le( c+ \beta  \||X^t|^ {\tt r}_\infty  \|_{L^4}) \,  \||X^t-X^{t,\varphi}|_\infty \|_{L^4}  \le C \varphi.
\end{align*} 
Especially  (see \cite[Theorem 3.11]{GeissStein16} ) we have that in probability,
\begin{align*}  &\lim_{u \downarrow 0}  \frac{  \rho_{u h} \f  \left( t, X^t, (Y_1, ..., Y_d)\right ) -  \f  \left( t, X^t, (Y_1, ..., Y_d)\right )}{u} \\
& =  \langle  D_{\cdot,0}  \f  \left( t, X^t, (Y_1, ..., Y_d)\right ) ,h\rangle_{L^2([0,T])}.
\end{align*}
On the other hand, 
\begin{align*}  & |\rho_{u h} \f  \left( t, X^t, y\right ) -  \f  \left( t, X^t, y\right ) | \\
& \le  L_\bx \, \Big ( c+  \frac{\beta}{2}(|\rho_{u h} X^t|^ {\tt r}_\infty + |X^t|^ {\tt r}_\infty) \Big)   \,  | \rho_{u h} X^t -X^t |_\infty \\
& \le    L_\bx \, \Big ( c+  \frac{\beta}{2}(|\rho_{u h} X^t|^ {\tt r}_\infty + |X^t|^ {\tt r}_\infty) \Big)   \,  u K_\sigma \| h \|_{L^1[0,T]}  \,\, e^{L_b t},
\end{align*}
where we used \eqref{Cam-Martin-shift-of-X}.  Hence we get for any $h \in L^2([0,T])$ with $\int_0^T |h(r)|dr \le 1$  
\begin{align*}  \langle  D_{\cdot,0}  \f  \left( t, X^t, y)\right ) ,h\rangle_{L^2([0,T])} \le L_\bx  K_\sigma  
\big ( c+  \beta |X^t|^ {\tt r}_\infty \big) e^{L_b T}.
\end{align*}
This gives 
$$\|  D_{\cdot,0}  \f  \left( t, X^t, y)\right )\|_{L^\infty[0,T]}     \le L_\bx K_\sigma \big ( c+  \beta |X^t|^ {\tt r}_\infty \big) e^{L_b T}. $$
We will show that in probability,
\begin{align*}  &\lim_{u \downarrow 0}  \frac{  \rho_{u h} \f  \left( t, X^t, (Y_1, ..., Y_d)\right ) -  \f  \left( t, X^t, (Y_1, ..., Y_d)\right )}{u} \\
&=  \lim_{u \downarrow 0}  \frac{ \f  \left( t, \rho_{u h}X^t, \rho_{u h}( Y_1, ..., Y_d)\right ) -  \f  \left( t, X^t, \rho_{u h}( Y_1, ..., Y_d)\right )}{u} \\
& \quad+ \sum_{k=1}^d \lim_{u \downarrow 0} \bigg ( \frac{  \f  \left( t, X^t, \rho_{u h}(Y_1, ...,Y_{d+1-k}),..., Y_d\right )}{u} \\
& \quad\quad \quad  \quad \quad \quad - \frac{    \f  \left( t, X^t, \rho_{u h}(Y_1, ...,Y_{d-k}),...,Y_d\right )}{u} \bigg ) \\
& =  \langle  D_{\cdot,0}  \f  \left( t, X^t, y\right ) ,h\rangle_{L^2([0,T])} \mid_{y =Y} \\
&  \quad +  \sum_{k=1}^d  \langle  D_{\cdot,0}  \f  \left( t, \bx^t, y_1,..., Y_k,...,y_d\right ) ,h\rangle_{L^2([0,T])} \mid_{\bx^t =X^t, y_i =Y_i, i\neq k}. 
\end{align*}
From this  \eqref{chain-rule} follows. The existence of the $G_1,...,G_d$ we get from Nualart  \cite[Proposition 1.2.4]{Nualart}.
To  see that the above limits work, we use  \cite[Theorem 14.1 (ix)]{Janson}  which states that the Cameron Martin shift is continuous as a map
$L^2([0,T])\times L^0(\PP^W; E) \to L^0(\PP^W; E) $ given by  $(\eta, \xi) \mapsto  \rho_{\eta} \xi. $
Moreover,  $\lim_{u \downarrow 0}  \rho_{u h}\xi =\xi$   in probability, for any $\xi \in L^0(\PP^W; E).$
Clearly, it holds
  \begin{align*}
& \lim_{u \downarrow 0}  \frac{ \f  \left( t, \rho_{u h}X^t, \rho_{u h}( Y_1, ..., Y_d)\right ) -  \f  \left( t, X^t, \rho_{u h}( Y_1, ..., Y_d)\right )}{u} \\
&= \lim_{u \downarrow 0}  \rho_{u h} \left (  \frac{ \f  \left( t, X^t,(Y_1, ..., Y_d)\right ) -  \f  \left( t,  \rho_{-u h}X^t, ( Y_1, ..., Y_d)\right )}{u} \right ). 
\end{align*}
Now, considering the map $(\eta, \xi) \mapsto  \rho_{\eta} \xi$ for 
 $$  (\eta, \xi):=  \Big (uh,  \frac{ \f  \left( t, X^t,(Y_1, ..., Y_d)\right ) -  \f  \left( t,  \rho_{-u h}X^t, ( Y_1, ..., Y_d)\right )}{u} \Big ) $$
we get the convergence to $\langle  D_{\cdot,0}  \f  \left( t, X^t, y\right ) ,h\rangle_{L^2([0,T])} \mid_{y =Y}$
thanks to  the joint  continuity   and  the convergence in probability 
  \begin{align*}
 & \lim_{u \downarrow 0}   \frac{ \f  \left( t, X^t,(Y_1, ..., Y_d)\right ) -  \f  \left( t,  \rho_{-u h}X^t, ( Y_1, ..., Y_d)\right )}{u}  \\
 &= \langle  D_{\cdot,0}  \f  \left( t, X^t, y\right ) ,h\rangle_{L^2([0,T])} \mid_{y =Y}.
  \end{align*}
\end{proof}

\bibliographystyle{plain}

\begin{thebibliography}{99}

\bibitem{AntoManc16}
F.~Antonelli, C.~Mancini, {\it Solutions of BSDEs with jumps and quadratic/locally Lipschitz generator}. Stochastic Processes and their Applications, {\bf 126}(10), 3124-3144, 2016.

\bibitem{Applebaum}
D.~Applebaum, {\it L\'evy Processes and Stochastic Calculus}. Cambridge University Press, Cambridge, 2009.


\bibitem{Bahlali}
K.~Bahlali, {\it A domination method for solving unbounded quadratic BSDEs}. 
Graduate J. Math. {\bf 555}, 20-36, 2020.



\bibitem{bbp} G.~Barles, R.~Buckdahn and \'E.~Pardoux, {\it Backward stochastic differential equations and integral-partial differential equations}. Stochastics Stochastics Rep. {\bf 60}(1-2), 57-83, 1997.

\bibitem{Bech}
D.~Becherer,  {\it Bounded solutions to backward SDEs with jumps for utility optimization and indifference hedging}. Ann. Appl. Probab., {\bf 16}(4), 2027--2054, 2006.

\bibitem{BeBuKe}
D.~Becherer, M.~B\"uttner, K.~Kentia, {\it On the Monotone Stability Approach to BSDEs with Jumps: Extensions, Concrete Criteria and Examples}. In: Cohen, Gy\"ongy, dos Reis, Siska, Szpruch: Frontiers in Stochastic Analysis–BSDEs, SPDEs and their Applications. BSDE-SPDE 2017. Springer Proceedings in Mathematics \& Statistics {\bf 289}. Springer, Cham, 2017.


\bibitem{Bill}
P.~Billingsley, {\it Convergence of probability measures}. Wiley, 1999.


\bibitem{Bogachev} V.~Bogachev, {\it Gaussian measures}. AMS, 1998.

\bibitem{BouchardElie08}
B.~Bouchard and R.~Elie, {\it Discrete-time approximation of decoupled Forward–Backward SDE with jumps}.
Stoch. Proc. Appl. {\bf 118}, 53-75, 2008. 

\bibitem{BriandHu06}  
 P.~Briand and Y.~Hu,  {\it BSDE with quadratic growth and unbounded terminal value}.
       Probab. Theory Relat. Fields {\bf 136}, 604-618, 2006.


\bibitem{BriandHu08}
P.~Briand and Y.~Hu,, {\it Quadratic BSDEs with convex generators and unbounded terminal conditions}.
Probab. Theory Relat. Fields {\bf 141}, 543-567, 2008.

\bibitem{BriandRichou}
P.~Briand and A.~Richou, {\it On the uniqueness of solutions to quadratic BSDEs with non-convex generators}. Frontiers in Stochastic Analysis–BSDEs, SPDEs and their Applications, 2019. 

\bibitem{CheriditoNam} P.~Cheridito and K.~Nam, {\it BSDEs with terminal conditions that have bounded Malliavin derivative}. Journal of Functional Analysis {\bf 266}, 1257--1285, 2014.


\bibitem{Chung}  
K.L.~Chung and R.J.~Williams, {\it Introduction to stochastic integration.}  2. ed.


\bibitem{Delbaen-11b}
F.~Delbaen, Y.~Hu and X.~Bao, {\it Backward SDEs with superquadratic growth}. Probab. Theory Related Fields {\bf 150}(1-2), 145--192, 2011.

\bibitem{Delbaen-11}
F.~Delbaen, Y.~ Hu and A.~Richou, {\it On the uniqueness of solutions to quadratic BSDEs with convex generators and unbounded terminal conditions}. Ann. Inst. Henri Poincar\'e{} Probab. Stat. {\bf 47}(2), 559--574, 2011.


\bibitem{DelongImkeller10} \L.~Delong and P.~Imkeller, {\it On Malliavin’s differentiability of BSDEs with time delayed generators driven by Brownian motions and Poisson random measures}. Stoch. Proc. Appl. {\bf 120}(9), 1748-1775, 2010.


\bibitem{GeissStein24}  P.~Di Tella, H.~Geiss  and A.~Steinicke,   {\it Product formulas for multiple stochastic integrals associated with  L\'evy processes}.
Collectanea Mathematica, 2024. 

\bibitem{ElKaroui16}
N.~El Karoui, A.~Matoussi and A.~Ngoupeyou, {\it Quadratic Exponential Semimartingales and Application to BSDEs with jumps}. \url{https://hal.science/hal-01740692v1}.


\bibitem{FujiTaka}
M.~Fujii and A.~Takahashi,
 {\it  Quadratic–exponential growth BSDEs with jumps and their Malliavin’s differentiability}.
 Stoch. Proc. Appl. {\bf 128}(6), 2083-2130, 2018.


\bibitem{GeissStein16} C.~Geiss and A.~Steinicke,   {\it Malliavin derivative of random functions and applications to L\'evy driven BSDEs}. Electron. J. Probab. {\bf 21},1-28, 2016.



\bibitem{GeissStein20} C.~Geiss and A.~Steinicke,  {\it Existence, Uniqueness and Malliavin  Differentiability  of  L\'evy-driven BSDEs with locally Lipschitz Driver}. Stochastics, {\bf 92}(3), 418-453, 2020.

\bibitem{SarahGeiss21} S.~Geiss, {\it Sharp convex generalizations of stochastic Gronwall inequalities}. J. Differ. Equ.. {\bf 392}, 74--27, 2024.



\bibitem{GeissYlinen}
S.~Geiss and J.~Ylinen, {\it Decoupling on the Wiener Space, Related Besov Spaces, and Applications to BSDEs}. Memoirs AMS {\bf 1335}, 2021.

\bibitem{GeissZhou}
S.~Geiss and X.~Zhou,  {\it Regularity of stochastic differential equations on the Wiener space by coupling}.
\url{https://arxiv.org/pdf/2412.10836}


\bibitem{GuLinXu24}
Z.~Gu, Y.~Lin and K.~Xu, {\it Exponential growth BSDE driven by a marked point process}. {Probab. Uncertain. Quant. Risk}, 2024, 9(4): 453-498.


\bibitem{HeWangYan}
S.~He, J.~Wang, and J.~Yan, {\it Semimartingale Theory and Stochastic Calculus.} Taylor and Francis, 1992.

\bibitem{ImkellerReisSalkeld}
 P.~Imkeller, G.~dos Reis and W.~Salkeld, {\it Differentiability of SDEs with drifts of super-linear growth}. Electron. J.~Probab.~{\bf 24}, 1-43, 2019.

\bibitem{ito} K.~It\^o, {\it Spectral type of the shift transformation of differential process with stationary increments}. Trans.~Amer.~Math.~Soc.~{\bf 81}, 253-263, 1956.

\bibitem{Janson} S.~Janson, {\it Gaussian Hilbert Spaces}. Cambridge,  1997.

\bibitem{Kazi-TaniPossamaiZhou15}
N.~Kazi-Tani, D.~Possama\"i{}, and C. Zhou, {\it Quadratic BSDEs with jumps: a fixed-point approach}, Electron. J. Probab. {\bf 20} (2015), no. 66, 28 pp.


\bibitem{Kern} J.~Kern, {\it The Skorohod Topologies.} Math Semester {\bf 71}, 1-18, 2024. 

\bibitem{Koby00} M.~Kobylanski, {\it Backward stochastic differential equations and partial differential equations with quadratic growth}. The Annals of Probability {\bf 28}(2), 558-602, 2000.

\bibitem{Kruse}
T.~Kruse and A.~Popier, {\it BSDEs with monotone generator driven by Brownian and Poisson noises in a general filtration}. Stochastics {\bf 88}(4), 491-539, 2016.
\bibitem{Kruse17}
T.~Kruse and A.~Popier,  {\it $L^p$-solution for BSDEs with jumps in the case $p<2$}. Stochastics, {\bf 89}(8), 1201–1227, 2017.  \url{http://dx.doi.org/10.1080/17442508.2017.1290095}.

\bibitem{Liu-etal}
C.~Liu, A.~Papapantoleon and A.~Sapalours, {\it Convergence rates for Backward SDEs driven by Lévy processes}. \url{https://arxiv.org/pdf/2402.01337}.

\bibitem{MaZhang02}
J.~Ma and J.~Zhang, {\it Path regularity for solutions of backward SDE’s}. Probab. Theory
Related Fields {\bf 122}, 163–190, 2002.

\bibitem{Majka}
M.~Majka, {\it Transportation inequalities for non-globally dissipative SDEs with jumps via Malliavin calculus and coupling}. Ann. Inst. H. Poincar\'e Probab. Statist. {\bf 55}(4), 2019-2057, 2019. 

\bibitem{Marinelli2014}
C.~Marinelli and M.~R{\"o}ckner, {\it On Maximal Inequalities for Purely Discontinuous Martingales in 
Infinite Dimensions}. S{\'e}minaire de Probabilit{\'e}s XLVI, 293-315, 2014.

\bibitem{Masiero14}
F. Masiero and A. Richou, {\it HJB equations in infinite dimensions with locally Lipschitz Hamiltonian and unbounded terminal condition}. J. Differential Equations {\bf 257}(6), 1989--2034, 2014.

\bibitem{Mast-etal}
T.~Mastrolia, D.~Possamaï and A.~Réveillac, {\it  On the Malliavin differentiability of BSDEs}.
Annales de l’Institut Henri Poincaré - Probabilités et Statistiques, {\bf 53}(1), 464-492, 2017.



\bibitem{MatoussiSalhi19}
A.~Matoussi and R.~ Salhi, {\it Exponential Quadratic BSDEs with infinite activity Jumps}. \url{https://arxiv.org/pdf/1904.08666}.

\bibitem{MatoussiSalhi20}
A.~Matoussi and R.~ Salhi, {\it Generalized BSDE with jumps and stochastic quadratic growth}. \url{https://hal.science/hal-03091716/}.



\bibitem{Menaldi}
J.L.~Menaldi, {\it Stochastic Differential Equations with Jumps}. Mathematics Faculty Research
Publications {\bf 77}, 2014.
\url{https://digitalcommons.wayne.edu/mathfrp/77}.

\bibitem{Morlais-10}
M.-A.~Morlais, {\it A new existence result for quadratic BSDEs with jumps with application to the utility maximization problem}. Stochastic Process. Appl. {\bf 120}(10), 1966--1995, 2010.


\bibitem{Nualart}D.~Nualart,   {\it The Malliavin Calculus and Related Topics}. 2nd edition, Springer, Berlin, 2006.

\bibitem{NualartNualart}D.~Nualart and E.~Nualart, {\it Introduction to Malliavin Calculus}. Cambridge, 2018.

\bibitem{Papapantoleon-18}
A.~Papapantoleon, D.~Possama\"i{} and A.~Saplaouras, {\it Existence and uniqueness results for BSDE with jumps: the whole nine yards}. Electron.~J.~Probab.~{\bf 23}(121), 2018.


\bibitem{PP2} \'E.~Pardoux and S.~Peng, {\it Adapted solution of a backward stochastic differential equation}. Systems Control Lett. {\bf 14}(1), 55-61, 1990.

\bibitem{PardouxPeng92} \'E.~Pardoux and S.~Peng,
 {\it Backward stochastic differential equations and quasilinear parabolic partial differential equations}. In: Rozovskii, Sowers: Stochastic Partial Differential Equations and Their Applications, Lecture Notes in Control and Information Sciences {\bf 176}, Springer, Berlin, Heidelberg, 1992.

\bibitem{Pest95} W.R.~Pestman, {\it Measurability of linear operators in the Skorokhod topology}. Bull.~Belg.~Math.~Soc.~Simon Stevin {\bf 2}(4), 381-388, 1995.

\bibitem{Petrou}
E.~Petrou, {\it Malliavin Calculus in L\'evy spaces and Applications to Finance}. Elec.~J.~of Prob.~{\bf 13}(27), 852-879, 2008.

\bibitem{Przyby}
P.~Przybyłowicz, V.~Schwarz, A.~Steinicke and M.~Sz\"olgyenyi, {\it A Skorohod measurable universal functional representation of solutions to semimartingale SDEs}, Stochastic Analysis and Applications 42(6), 2024.

\bibitem{QuenezSulem}M.-C.~Quenez and A.~Sulem, {\it BSDEs with jumps, optimization and applications to dynamic risk measures}. Stoch.~Proc.~Appl.~{\bf 123}(8), 3328-3357, 2013.

\bibitem{RY99} D.~Revuz and M.~Yor, {\it Continuous Martingales and Brownian Motion}. Springer, Tokyo, 1999.

\bibitem{Richou12}  A.~Richou, {\it Markovian quadratic and superquadratic BSDEs with an unbounded terminal condition}. 
Stoch.~Proc.~Appl.~{\bf 122}, 3173–3208, 2012.

\bibitem{Richou11}  A.~Richou, {\it Numerical simulation of BSDEs with drivers of quadratic growth}.
Ann.~of Appl.~Prob.~{\bf 21}(5), 1933--1964, 2011.

\bibitem{RW00} L.C.G.~Rogers and D.~Williams, {\it Diffusions, Markov Processes and Martingales Vol.~2}. Cambridge university Press, Bath, 2000.



\bibitem{Rong Situ} R.~Situ, {\it Theory of stochastic differential equations with jumps and applications}. Mathematical and Analytical Techniques with Applications to Engineering. Springer, New York, 2005. 

\bibitem{Skorohod}
A.V.~Skorohod, {\it Limit theorems for stochastic processes}. Teor.~Veroyatnost.~i Primenen.~{\bf 1}, 289-319, 1956.

\bibitem{Sokol} A.~Sokol, {\it Optimal Novikov-type criteria for local martingales with jumps}. Elec.~Comm.~in Prob.~{\bf 18}(39), 1-8, 2013.



\bibitem{Sugita} H.~Sugita, {\it On a characterization of the Sobolev spaces over an abstract Wiener space}. J.~Math.~Kyoto Univ.~{\bf 25}(4), 717-725, 1985.


\bibitem{Steinicke} A.~Steinicke, {\it  Functionals of a L\'evy Process on Canonical and Generic Probability Spaces}. J.~Theoret.~Probab.~{\bf 2}, 443-458, 2016. 

\bibitem{Whitt}
W.~Whitt, {\it Stochastic-process limits}. 1st edn. Springer Series in Operations Research and Financial Engineering. Springer, New York, 2002.

\bibitem{Zhang04}
J.~Zhang, {\it A numerical scheme for BSDEs}. Ann.~Appl.~Prob.~{\bf 14}(1), 459–488, 2004.

\bibitem{Zhang17}
J.~Zhang, {\it Backward stochastic differential equations}. Probability Theory and Stochastic Modelling {\bf 86}, Springer, New York, 2017.

\end{thebibliography}

\end{document}